\DeclareMathOperator{\codim}{codim}
\DeclareMathOperator{\coker}{coker}
\DeclareMathOperator{\image}{im}
\DeclareMathOperator{\rank}{rank}
\DeclareMathOperator{\sym}{S}
\DeclareMathOperator{\tor}{Tor}
\mathchardef\ordinarycolon\mathcode`\:
\begin{document}

\title{Equivariant basic cohomology of singular Riemannian foliations}

\author{Francisco C.~Caramello Jr.}
\address{Departamento de Matemática, Universidade Federal de Santa Catarina, R. Eng. Agr. Andrei Cristian Ferreira, 88040-900, Florianópolis - SC, Brazil}
\email{francisco.caramello@ufsc.br}

\subjclass[2010]{Primary 53C12; Secondary 55N91, 57R30} 

\newenvironment{proofoutline}{\proof[Proof outline]}{\endproof}
\theoremstyle{definition}
\newtheorem{example}{Example}[section]
\newtheorem{definition}[example]{Definition}
\newtheorem{remark}[example]{Remark}
\theoremstyle{plain}
\newtheorem{proposition}[example]{Proposition}
\newtheorem{theorem}[example]{Theorem}
\newtheorem{lemma}[example]{Lemma}
\newtheorem{corollary}[example]{Corollary}
\newtheorem{claim}[example]{Claim}
\newtheorem{conjecture}[example]{Conjecture}
\newtheorem{thmx}{Theorem}
\renewcommand{\thethmx}{\Alph{thmx}} 
\newtheorem{corx}{Corollary}
\renewcommand{\thecorx}{\Alph{corx}} 

\newcommand{\dif}[0]{\mathrm{d}}
\newcommand{\od}[2]{\frac{d #1}{d #2}}
\newcommand{\pd}[2]{\frac{\partial #1}{\partial #2}}
\newcommand{\dcov}[2]{\frac{\nabla #1}{d #2}}
\newcommand{\proin}[2]{\left\langle #1, #2 \right\rangle}
\newcommand{\f}[0]{\mathcal{F}}
\newcommand{\g}[0]{\mathcal{G}}
\newcommand{\metric}{\ensuremath{\mathrm{g}}}

\begin{abstract}
We extend the notion of equivariant basic cohomology to singular Riemannian foliations with transverse infinitesimal actions, aiming the particular case of singular Killing foliations, which admit a natural transverse action describing the closures of the leaves. This class of foliations includes those coming from isometric actions, as well as orbit-like foliations on simply connected manifolds. This last fact follows since we establish that the strong Molino conjecture holds for orbit-like foliations. In the spirit of the classical localization theorem of Borel and its later generalization to regular Killing foliations, we prove that the equivariant basic cohomology of a singular Killing foliation localizes to the set of closed leaves of the foliation, provided this set is well behaved. As applications, we obtain that the basic Euler characteristic also localizes to this set, and that the dimension of the basic cohomology of the localized foliation is less than or equal to that of the whole foliation, with equality occurring precisely in the equivariantly formal case.
\end{abstract}

\maketitle
\setcounter{tocdepth}{1}
\tableofcontents

\section{Introduction}
Regular Riemannian foliations are relatively well known and have a robust structural theory, due mainly to P.~Molino \cite{molino}. This theory establishes that the leaf closures of such a foliation ${\mathcal{F}}$ form a singular Riemannian foliation $\overline{{\mathcal{F}}}$, which moreover is described by the action of a locally constant sheaf $\mathscr{C}_{\mathcal{F}}$ of Lie algebras of germs of transverse Killing vector fields (see Section \ref{section: (Singular) Riemannian foliations} for more details). When this sheaf is constant we say that ${\mathcal{F}}$ is Killing, following \cite{mozgawa}. One of the most elementary algebraic invariants of a foliation is its basic cohomology $H({\mathcal{F}})$, which can be seen as the De Rham cohomology of its (often quite singular) leaf space. For a regular Killing foliation one can also consider its \emph{equivariant} basic cohomology $H_{\mathfrak{a}}({\mathcal{F}})$, with respect to the transverse action of its structural algebra $\mathfrak{a}=\mathscr{C}_{\mathcal{F}}(M)$. This invariant, introduced in \cite{goertsches} and further studied in \cite{goertsches2}, will also capture information on the leaf closures, since $\mathfrak{a}{\mathcal{F}}=\overline{{\mathcal{F}}}$. In those papers the authors proved transverse generalizations of the Borel--Hsiang and Atiyah--Bott/Berline--Vergne localization theorems, which were later extended in \cite{lin} to the case of infinitesimal transverse actions on regular Riemannian foliations in general. Moreover, in \cite{caramello2} the authors showed that the ring structure of the equivariant basic cohomology remains constant under regular deformations (see also \cite{caramello}), which linked this object to the theory of torus actions on orbifolds.

The theory of \emph{singular} Riemannian foliations also started with Molino's work, but is still in plain development, having seen relatively recent answers to fundamental questions and new interesting examples (e.g., \cite{alex4}, \cite{mendes}, \cite{lytchak}). For instance, Molino's conjecture that the closures of the leaves of a singular Riemannian foliation form another singular Riemannian foliation was established recently in \cite{alex4}. These foliations appear naturally in the theories of isometric actions and submanifolds (for example, as the set of all parallel submanifolds of an isoparametric submanifold \cite{thorbergsson}), and play interesting roles in many other areas of geometry, for instance in the study of the regularity of the Sharafutdinov projection onto the soul \cite{wilking}. In this paper we are mostly interested in singular Killing foliations, a subclass that includes foliations given by the orbits of isometric actions (see Example \ref{example: homogeneous Riemannian foliations are Killing}), as well as some Riemannian foliations on simply connected manifolds. This last fact is related to an important question, know as the \emph{strong} Molino conjecture (see \cite[p. 215]{molino}, \cite{alex2}): is the Molino sheaf of a singular Riemannian foliation smooth? We prove that this holds for the so-called orbit-like foliations (see Section \ref{section: orbit-like foliations} for detais):
\begin{thmx}\label{theoremA}
If ${\mathcal{F}}$ is orbit-like, then $\mathscr{C}_{\mathcal{F}}$ is smooth. In particular, any orbit-like foliation ${\mathcal{F}}$ of a simply connected manifold is Killing.
\end{thmx}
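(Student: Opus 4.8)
The plan is to argue locally. Since $\mathscr{C}_{\mathcal{F}}$ is locally constant, its smoothness near a point $p$ amounts to producing, on a neighbourhood $U$ of $p$, finitely many $\mathcal{F}$-foliated vector fields whose germs generate the stalk $(\mathscr{C}_{\mathcal{F}})_q$ for every $q\in U$. Over the regular stratum $M_0$ there is nothing to prove: there $\mathcal{F}$ restricts to a regular Riemannian foliation $\mathcal{F}_0$ and $\mathscr{C}_{\mathcal{F}}$ coincides with its Molino sheaf, which by the classical structure theorem is a sheaf of germs of foliated vector fields, hence smooth. So one is reduced to a point $p$ lying on a singular stratum $\Sigma$, where I would work inside a distinguished tubular neighbourhood of the leaf $L_p$.

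The key is to replace $\mathcal{F}$ near $p$ by its linear model, and this is where orbit-likeness is used. By the linearization available for orbit-like foliations, in a tube $\tau$ over $L_p$ the foliation $\mathcal{F}|_\tau$ is foliated-diffeomorphic — after passing if necessary to the holonomy cover of $L_p$ — to the linear model $P\times_{K_p}\nu_p L_p(\epsilon)$ built from the normal holonomy bundle $P\to L_p$ and the infinitesimal foliation $\mathcal{F}_p$ on the normal disk, where, $\mathcal{F}$ being orbit-like, $\mathcal{F}_p$ is the homogeneous foliation given by the orbits of a compact connected subgroup $K_p\leq\mathrm{O}(\nu_p L_p)$. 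Compactness of $K_p$ makes the leaves of $\mathcal{F}_p$ closed; combining this with the fact that, for a Riemannian foliation, leaf closures meet a small slice in the closures of the slice-plaques, one gets that near $p$ the leaf closures of $\mathcal{F}$ do not grow in directions transverse to the tube projection $\tau\to L_p$. Hence the entire ``closure excess'' of $\mathcal{F}$ near $p$ is carried along $L_p$: it is governed by the regular Riemannian foliation $\mathcal{F}_\Sigma:=\mathcal{F}|_\Sigma$ induced on the stratum of $p$, whose Molino sheaf $\mathscr{C}_{\mathcal{F}_\Sigma}$ is again smooth by the classical theorem.

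It remains to transport this regular datum back to $U$. I would pick $\mathcal{F}_\Sigma$-foliated vector fields $X_1,\dots,X_k$ on $U\cap\Sigma$ whose germs generate $\mathscr{C}_{\mathcal{F}_\Sigma}$, use the linear structure of the model (on which $P$ has been trivialized by the covering) to extend the $X_i$ to $\mathcal{F}$-foliated vector fields $\widetilde{X}_i$ near $p$, and then descend them to $U$; these are smooth and, by construction, their germs generate $(\mathscr{C}_{\mathcal{F}})_q$ at every $q\in U$, which proves the first assertion. An alternative would be to invoke Alexandrino's desingularization of orbit-like foliations and push down the (smooth, regular) Molino sheaf of the desingularized foliation, but matching the push-forward with $\mathscr{C}_{\mathcal{F}}$ looks no easier than the direct argument above.

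The main obstacle is exactly this last transport step, and more broadly the control of $\mathscr{C}_{\mathcal{F}}$ across a whole tower of strata: knowing that $\mathscr{C}_{\mathcal{F}_0}$ is smooth over $M_0$ does not by itself guarantee that its local generators extend smoothly across the singular set — this is the content of the strong Molino conjecture — and what makes the extension work in the orbit-like case is the rigidity of the linear model, with the compactness of $K_p$ forcing the transverse part of $\mathscr{C}_{\mathcal{F}}$ to collapse near $p$ and leaving only the smooth stratum part. Subsidiary technical points to be handled with care are the slice-compatibility of leaf closures, the passage to holonomy covers to trivialize monodromy, and the fact that $\overline{L_p}$ may reach into lower strata. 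Finally, the ``in particular'' follows immediately: on a simply connected $M$ a locally constant sheaf is constant, so $\mathscr{C}_{\mathcal{F}}$ is constant; being also smooth, its global sections then span the stalks and constitute a genuine transverse action with $\overline{\mathcal{F}}=\mathscr{C}_{\mathcal{F}}\mathcal{F}$, which is precisely what it means for $\mathcal{F}$ to be Killing.
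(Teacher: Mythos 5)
Your approach is genuinely different from the paper's, and it has a real gap precisely where you flag one. The paper does not linearize $\mathcal{F}$ in a tube over $L_p$ and then transport $\mathcal{F}_\Sigma$-foliated fields outward; instead it works inside a \emph{local reduction} $N$ at $x$ (the generalized slice of \cite{alex9}), equipped with the adapted metric $\mathrm{g}_N$ that preserves the transverse metric of $\mathcal{F}$ (\cite[Proposition 2.20]{alex9}). A section $X$ of $\mathscr{C}_{\mathcal{F}}$ then induces a continuous \emph{isometric} flow $\varphi$ on the quotient $N/\mathcal{F}_N$; orbit-likeness guarantees $\mathcal{F}_N$ is homogeneous under a compact Lie group (\cite[Corollary 2.25]{alex9}), so the smoothness-of-isometric-flows theorem of Alexandrino--Radeschi (\cite[Theorem 3.1]{alex9}) makes $\varphi$ smooth, and Schwarz's lifting theorem produces a smooth equivariant flow on $N$ whose generator represents $X|_N$; finally this is lifted to $V$ along the submersion $p:V\to N$. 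None of these ingredients appear in your outline.

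The gap in your argument is the ``transport step,'' and it is not a small one: knowing that $\mathscr{C}_{\mathcal{F}_\Sigma}$ is smooth on the stratum does not by itself produce smooth $\mathcal{F}$-foliated extensions of its sections into the tube — that extension problem is essentially the strong Molino conjecture in disguise, and ``use the linear structure of the model'' is not an argument. Your heuristic that compactness of $K_p$ ``forces the transverse part of $\mathscr{C}_{\mathcal{F}}$ to collapse near $p$'' and so reduces everything to $\mathscr{C}_{\mathcal{F}_\Sigma}$ is also not established: it is true that infinitesimal closedness implies the slice foliation is closed, so the closure dynamics are concentrated along $L_p$-directions; but the Molino sheaf $\mathscr{C}_{\mathcal{F}}$ restricted to $\Sigma$ surjects onto $\mathscr{C}_\alpha$ with a nontrivial kernel in general (those sections vanishing on $\Sigma$ but not nearby), so lifting generators of $\mathscr{C}_{\mathcal{F}_\Sigma}$ alone would not recover all of $\mathscr{C}_{\mathcal{F}}$ near $p$ even if you could extend them smoothly. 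The precise mechanism that makes the orbit-like case tractable — replacing the tube by the local reduction $N$ where $\mathcal{F}_N$ is a genuine compact-group-orbit foliation, so that Schwarz's theorem becomes available — is the crucial idea your proposal is missing. The ``in particular'' clause you handle correctly.
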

This is Theorem \ref{proposition: strong molino conjecture for orbit-like} in the text (see also Example \ref{orbit-like on 1connected are killing}). By analogous reasoning, if the strong Molino conjecture holds true in general, the class of singular Killing foliations will include all complete Riemannian foliations on simply connected manifolds. Theorem \ref{theoremA} hence further motivates the study of singular Killing foliations as a relevant class of Riemannian foliations, intimately connected with the study of their leaf closures.

As in the regular case, the Molino sheaf $\mathscr{C}_{\mathcal{F}}$ of a singular Killing foliation is constant, and the structural algebra $\mathfrak{a}= \mathscr{C}_{\mathcal{F}}(M)$ acts transversely on ${\mathcal{F}}$, with $\mathfrak{a}{\mathcal{F}}=\overline{{\mathcal{F}}}$ (see Sections \ref{section: (Singular) Riemannian foliations} and \ref{section: transverse action of structural algebra on a skf} for details). It is then relevant to investigate the equivariant basic cohomology $H_{\mathfrak{a}}({\mathcal{F}})$, with respect to this $\mathfrak{a}$-action. In this matter, our results generalize some of those in \cite{goertsches} to the singular setting. Moreover, when specified to the regular case, they do not involve the hypothesis of ${\mathcal{F}}$ being transversely oriented. The main result is a singular transverse generalization of Borel's localization theorem \cite{borel}, which we show to hold for the subclass of \emph{neat} singular Killing foliations (see Definition \ref{definition: neat skf}). This hypothesis is necessary because, in contrast with the regular case, for a singular Killing foliation the union $M^{\mathfrak{a}}$ of all closed leaves may not be a submanifold (see Example \ref{example: propagation of non minimal closed leaves}). We denote ${\mathcal{F}}^\mathfrak{a}:=\mathcal{F}|_{M^{\mathfrak{a}}}$.
\begin{thmx}\label{theoremB}
Let $(M,{\mathcal{F}})$ be a transversely compact, neat singular Killing foliation with structural algebra $\mathfrak{a}$, and let $i:M^{\mathfrak{a}}\to M$ be the natural inclusion. Then the localized map
$$S^{-1}i^*:S^{-1}H_{\mathfrak{a}}({\mathcal{F}})\longrightarrow S^{-1}H_{\mathfrak{a}}({\mathcal{F}}^\mathfrak{a})$$
is an isomorphism, where $S=\sym(\mathfrak{a}^*)\setminus\{0\}$.
\end{thmx}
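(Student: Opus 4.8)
The strategy is to mimic the proof of the classical Borel localization theorem \cite{borel}, with the role of the ``fixed point set'' played by $M^{\mathfrak a}$ and that of the coefficient ring by $\sym(\mathfrak a^*)$. The engine of the argument is the exactness (flatness) of the localization functor $S^{-1}(-)$ — which makes sense since $\sym(\mathfrak a^*)$ is a polynomial ring, hence a domain, and $S$ is multiplicative: it commutes with cohomology and turns every Mayer--Vietoris sequence of equivariant basic cohomology attached to a saturated open cover into an exact sequence of $S^{-1}\sym(\mathfrak a^*)$-modules. Granting this, the theorem reduces to (i) a vanishing statement for the part of $\mathcal F$ away from its closed leaves, and (ii) a patching argument around $M^{\mathfrak a}$ that uses the neatness hypothesis.

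\emph{Step 1: vanishing away from the closed leaves.} I would first prove that if $(N,\mathcal G)$ is a transversely compact singular Killing foliation with structural algebra $\mathfrak a$ and \emph{no} closed leaves, then $S^{-1}H_{\mathfrak a}(\mathcal G)=0$. The local input is a slice model for the transverse $\mathfrak a$-action along a leaf closure $\overline L$: combining Molino's local theory for singular Riemannian foliations with the identity $\mathfrak a\mathcal G=\overline{\mathcal G}$, one produces a saturated distinguished neighbourhood $U_{\overline L}$ of $\overline L$ onto which $\mathcal G|_{U_{\overline L}}$ foliated-deformation-retracts and whose equivariant basic cohomology is a module over $\sym(\mathfrak a_{\overline L}^*)$ for a ``transverse isotropy'' subalgebra $\mathfrak a_{\overline L}\subsetneq\mathfrak a$, the inclusion being proper precisely because $L$ is not closed. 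Hence any nonzero element of $\ker\bigl(\sym(\mathfrak a^*)\to\sym(\mathfrak a_{\overline L}^*)\bigr)$ annihilates $H_{\mathfrak a}(\mathcal G|_{U_{\overline L}})$, so $S^{-1}H_{\mathfrak a}(\mathcal G|_{U_{\overline L}})=0$. Transverse compactness then lets one extract a finite good saturated cover of $N$ by such neighbourhoods; applying $S^{-1}(-)$ to the associated Mayer--Vietoris sequences and inducting over the cover — all multiple intersections being again saturated, transversely compact up to foliated homotopy, and free of closed leaves — yields $S^{-1}H_{\mathfrak a}(\mathcal G)=0$.

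\emph{Step 2: localization around $M^{\mathfrak a}$.} This is where neatness enters: it guarantees that $M^{\mathfrak a}$, although possibly not a submanifold in general, admits a saturated tubular neighbourhood $U=\tub(M^{\mathfrak a})$ that foliated-deformation-retracts onto $M^{\mathfrak a}$, so that the restriction $H_{\mathfrak a}(\mathcal F|_U)\to H_{\mathfrak a}(\mathcal F^{\mathfrak a})$ is an isomorphism by foliated homotopy invariance of $H_{\mathfrak a}$. Choosing a slightly smaller tube and letting $V$ be the complement of its closure, the foliations $\mathcal F|_V$ and $\mathcal F|_{U\cap V}$ are, up to foliated homotopy equivalence, transversely compact singular Killing foliations with no closed leaves, so Step 1 gives $S^{-1}H_{\mathfrak a}(\mathcal F|_V)=S^{-1}H_{\mathfrak a}(\mathcal F|_{U\cap V})=0$. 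Feeding these into the localized Mayer--Vietoris sequence of $M=U\cup V$ and comparing with the one-set cover of $M^{\mathfrak a}$ via the five lemma shows that $S^{-1}i^*$ is an isomorphism. In the spirit of the introduction's remark, no transverse orientability is needed: the tube is exploited only through its equivariant retraction onto $M^{\mathfrak a}$, never through a Thom isomorphism.

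\emph{Main obstacle.} The crux is the local model of Step 1 — producing, for a singular \emph{Killing} foliation, a workable slice description of the transverse $\mathfrak a$-action near an arbitrary leaf closure and pinning down the transverse isotropy $\mathfrak a_{\overline L}$ so that the local equivariant basic cohomology is manifestly $\sym(\mathfrak a^*)$-torsion whenever $L$ is not closed. A secondary technical point is to ensure that the saturated cover of Step 1 can be taken finite with sufficiently well-behaved (transversely compact, closed-leaf-free) intersections for the Mayer--Vietoris induction to run; this, together with the availability of the global tube in Step 2, is exactly what the transverse compactness and neatness hypotheses are present to secure.
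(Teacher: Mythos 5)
Your overall strategy is the same as the paper's: cover $M$ by $\{U,V\}$ with $U$ a saturated neighbourhood of $M^{\mathfrak a}$ that equivariantly retracts onto it and $V$ a saturated set disjoint from $M^{\mathfrak a}$, prove torsion for $H_{\mathfrak a}(\mathcal F|_V)$ and $H_{\mathfrak a}(\mathcal F|_{U\cap V})$ by a Mayer--Vietoris induction over a good cover, and then localize. The paper also closes the ``main obstacle'' you flag via Proposition~\ref{proposition: cohomology of non closed leaves is torsion}: $H_{\mathfrak a}(\mathcal F|_{\overline L})\cong\sym(\mathfrak a_L^*)$ (imported from the regular case of Goertsches--T\"oben), which is torsion precisely when $L$ is not closed.

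There are, however, genuine gaps in your argument beyond the one you flag.

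First, equivariancy of the retraction is not ``foliated homotopy invariance of $H_{\mathfrak a}$''. Proposition~\ref{proposition: Homotopy invariance of equivariant basic cohomology} requires the homotopy to be $\mathfrak a$-equivariant, not merely foliate, and making the homothetic retraction onto $\overline L$ (or onto a component of $M^{\mathfrak a}$) $\mathfrak a$-equivariant is exactly what the paper proves via the homothetic invariance of the Molino sheaf (Lemma~\ref{lemma: invariance of molino sheaf under homothetic transformations} and Proposition~\ref{proposition: invariance of molino sheaf under homothetic transformations}); that result is the real technical engine and is missing from your outline.

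Second, your $\{U,V\}$ construction does not deliver what the Mayer--Vietoris induction needs. Taking $V$ to be the complement of the closure of a slightly smaller tube around $M^{\mathfrak a}$ produces a saturated open set whose leaf space is an \emph{open} subset of $M/\mathcal F$, which is not transversely compact, and ``up to foliated homotopy equivalence transversely compact'' is not correct: there is no obvious equivariant homotopy equivalence of $V$ onto anything transversely compact. The hypothesis that actually propagates through the induction is \emph{admitting a finite equivariant good cover}, and the paper secures this by defining $U$ and $V$ as unions of elements of a single finite equivariant good cover $\mathcal U$ of $M$ (those meeting $M^{\mathfrak a}$ and those not, respectively); then $V$, $U\cap V$, and all their subunions automatically inherit finite good covers. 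Your Step~1 should be restated as a statement about saturated open sets admitting finite equivariant good covers and containing no closed leaf, and your Step~2 should build $U$ and $V$ from such a cover rather than from a complement of a tube.

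Third, you assert the existence of the finite (equivariant) good cover as a consequence of transverse compactness, but this is itself a nontrivial point (Proposition~\ref{proposition: good cover}): it requires a stratification by holonomy type and uniformly normal neighbourhoods precisely because a single tube radius that works for all leaves near a singular or holonomy leaf does not exist (the paper notes this is where a previous argument of Wolak breaks down). Invoking this without argument leaves a real hole, since everything in your Steps~1 and~2 rests on it.
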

This theorem appears below as Theorem \ref{theorem: borel localization}. The technical tools to obtain this result involve the study of the equivariancy of the Molino sheaf under the so-called homothetic transformations of ${\mathcal{F}}$ (see Proposition \ref{proposition: invariance of molino sheaf under homothetic transformations}). This enables us to construct a suitable covering of $M$ by saturated open sets which equivariantly retract by deformations onto leaf closures (see Proposition \ref{proposition: good cover}), and the proof then follows usual cohomological techniques. This tool also covers a gap in the proof of \cite{wolak} that the basic cohomology of a singular Riemannian foliation is finite dimensional, when $M$ is compact (see Section \ref{section: good covers}). In fact, we prove this here under the weaker hypothesis of ${\mathcal{F}}$ being transversely compact, in Theorem \ref{theorem: basic cohomology is finite dimensional}:
\begin{thmx}\label{theoremC}
Let ${\mathcal{F}}$ be a transversely compact, complete singular Riemannian foliation. Then $\dim H({\mathcal{F}})<\infty$.
\end{thmx}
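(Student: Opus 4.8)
The plan is to run a Mayer--Vietoris argument over a finite good cover, reducing the statement to the basic cohomology of $\mathcal{F}$ restricted to a single leaf closure. First, using that $\mathcal{F}$ is transversely compact together with the good cover produced in Section~\ref{section: good covers} (Proposition~\ref{proposition: good cover}), whose construction rests on the invariance of the Molino sheaf under homothetic transformations (Proposition~\ref{proposition: invariance of molino sheaf under homothetic transformations}) --- and this is exactly the point left unaddressed in \cite{wolak} --- I would produce a \emph{finite} family $U_1,\dots,U_m$ of $\overline{\mathcal{F}}$-saturated open subsets covering $M$, arranged (shrinking as in a De Rham good cover) so that every nonempty intersection $U_{i_0}\cap\cdots\cap U_{i_k}$ carries a foliated deformation retraction onto a leaf closure of $\mathcal{F}$. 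Finiteness is immediate, since the $U_i$ descend to an open cover of the compact leaf closure space $M/\overline{\mathcal{F}}$; the structural point is that intersections of sets from this cover are again of the same type, so it behaves like a good cover.

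The base case is then: $\dim H(\mathcal{F}|_{\overline{L}})<\infty$ for every leaf closure $\overline{L}$. Here $\mathcal{F}|_{\overline{L}}$ is a complete singular Riemannian foliation all of whose leaves are dense, so this is the ``minimal'' situation. In the regular case this is classical --- the basic cohomology of a Riemannian foliation with dense leaves on a (transversely) compact manifold is finite-dimensional, being controlled by the finite-dimensional structural Lie algebra \cite{molino}. For the singular case I would reduce to the regular one, either through a desingularization of $\mathcal{F}|_{\overline{L}}$ that preserves basic cohomology, or directly by an inner induction on the codimension of the singular strata, exploiting that each infinitesimal slice foliation is a cone and hence foliated-retracts onto its (lower-dimensional) singular leaf. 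I expect this to be the technical heart of the argument.

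Finally I would assemble the two ingredients. Since the $U_i$ are $\overline{\mathcal{F}}$-saturated and $\mathcal{F}$ is Riemannian, there are basic partitions of unity subordinate to the cover, hence a Mayer--Vietoris long exact sequence
$$\cdots\to H^k(\mathcal{F}|_{U\cup V})\to H^k(\mathcal{F}|_U)\oplus H^k(\mathcal{F}|_V)\to H^k(\mathcal{F}|_{U\cap V})\to\cdots$$
for $\overline{\mathcal{F}}$-saturated opens $U,V$. Inducting on $m$, take $U=U_1\cup\cdots\cup U_{m-1}$ and $V=U_m$; then $U\cap V=\bigcup_{i<m}(U_i\cap U_m)$ is again a union of at most $m-1$ sets of good-cover type, so the inductive hypothesis gives $\dim H(\mathcal{F}|_U),\dim H(\mathcal{F}|_{U\cap V})<\infty$, while $\dim H(\mathcal{F}|_V)<\infty$ by the base case, as $V$ retracts onto a leaf closure. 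The exact sequence squeezes $H^k(\mathcal{F}|_{U\cup V})$ between finite-dimensional spaces in each degree; since $\Omega^k_{\mathrm{bas}}=0$ for $k$ above the codimension, only finitely many degrees are nonzero, and $\dim H(\mathcal{F})=\dim H(\mathcal{F}|_M)<\infty$.

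The main obstacle is twofold, and both halves are where the paper's new machinery enters: (i) producing the cover by $\overline{\mathcal{F}}$-saturated sets admitting foliated retractions onto leaf closures --- in the singular setting this genuinely needs the homothetic-transformation control of the Molino sheaf, which is precisely the gap in \cite{wolak}; and (ii) the finite-dimensionality of the basic cohomology of a singular Riemannian foliation with dense leaves, where the singular strata must be handled with care, the regular subcase being classical.
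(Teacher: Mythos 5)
Your skeleton --- produce the finite equivariant good cover of Proposition~\ref{proposition: good cover}, then run a Mayer--Vietoris induction on its size, reducing to the basic cohomology of $\mathcal{F}$ restricted to a single leaf closure --- matches the paper's proof exactly, including the observation that intersections of cover elements again carry good covers of strictly smaller cardinality, and including the role of the homothetic invariance of the Molino sheaf in justifying the retractions.

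Where you go astray is in the base case, which you flag as ``the technical heart'' and propose to attack by desingularizing $\mathcal{F}|_{\overline{L}}$ or by an inner induction on the codimension of its singular strata. There is in fact no singular stratum to handle: every leaf of $\mathcal{F}$ contained in $\overline{L}$ has the same dimension as $L$, because leaf closures remain inside their strata (\cite[Lemma 6.4]{molino}, quoted in Section~\ref{section: molino theory for srf}). So $\mathcal{F}|_{\overline{L}}$ is a \emph{regular} complete Riemannian foliation of the transversely compact manifold $\overline{L}$, and its basic cohomology is finite-dimensional by the classical result you already cite for the regular dense-leaf case (\cite[Theorem 0]{kacimi}; see also \cite[Proposition 3.11]{goertsches}). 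Your proposed inner reduction, carried out faithfully, would simply discover this fact on its first step and collapse; it is not wrong so much as unnecessary, and you should recognize that the whole point of retracting onto leaf closures in the good cover is precisely that this operation strips away the singularity. One further small inaccuracy: you describe $\mathcal{F}|_{\overline{L}}$ as ``complete,'' but the restricted metric need not be complete (this is noted in Section~\ref{section: molino theory for srf}); what one actually uses is that the holonomy pseudogroup of $\mathcal{F}|_{\overline{L}}$ is complete, which is enough for the regular-case finiteness theorem.
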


The basic Euler characteristic $\chi({\mathcal{F}})$, in particular, is well defined for any transversely compact, complete singular Riemannian foliation. In the case of neat singular Killing foliations, a consequence of Borel's localization is that this invariant localizes to the set of closed leaves:
\begin{thmx}\label{theoremD}
Let ${\mathcal{F}}$ be a transversely compact, neat singular Killing foliation with structural algebra $\mathfrak{a}$. Then
$$\chi({\mathcal{F}})=\chi({\mathcal{F}}^\mathfrak{a}).$$
\end{thmx}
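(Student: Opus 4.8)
The plan is to deduce the statement from Borel localization (Theorem~\ref{theorem: borel localization}) together with an algebraic identity expressing the basic Euler characteristic through the module structure of equivariant basic cohomology over $R:=\sym(\mathfrak{a}^*)$. Recall that $R$ is a polynomial algebra on $r:=\dim\mathfrak{a}$ generators of degree $2$, hence concentrated in even degrees, so for any graded $R$-module $N$ the parity decomposition $N=N^{\mathrm{ev}}\oplus N^{\mathrm{odd}}$ is one of $R$-modules. For $N$ finitely generated, write its Hilbert series as $P_N(t)=Q_N(t)(1-t^2)^{-r}$ with $Q_N\in\mathbb{Z}[t]$, and set $\chi_{\mathfrak{a}}(N):=Q_N(-1)$. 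I will use two elementary facts. (i) Splitting $P_N$ into its even and odd parts and using that a Hilbert numerator evaluated at $1$ equals the generic $R$-rank, one gets $\chi_{\mathfrak{a}}(N)=\rank_R(N^{\mathrm{ev}})-\rank_R(N^{\mathrm{odd}})$; consequently, if $\phi\colon N_1\to N_2$ is a degree-preserving $R$-homomorphism of finitely generated graded modules that becomes an isomorphism after localizing at $S=R\setminus\{0\}$, then $\chi_{\mathfrak{a}}(N_1)=\chi_{\mathfrak{a}}(N_2)$, since $S^{-1}\phi$ restricts to isomorphisms in each parity. (ii) $\chi_{\mathfrak{a}}$ is additive on short exact sequences, and if $d$ is an $R$-linear degree-one endomorphism of such a module $C$ with $d^2=0$ then $\chi_{\mathfrak{a}}(C)=\chi_{\mathfrak{a}}(\ker d/\operatorname{im}d)$; this follows from additivity applied to $0\to\ker d\to C\to\operatorname{im}d\to 0$ and $0\to\operatorname{im}d\to\ker d\to\ker d/\operatorname{im}d\to 0$, using that $\operatorname{im}d$ is $C/\ker d$ up to a degree shift by one, which changes the sign of $\chi_{\mathfrak{a}}$.

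The first step is to prove that $\chi(\mathcal{G})=\chi_{\mathfrak{a}}(H_{\mathfrak{a}}(\mathcal{G}))$ for every transversely compact, complete singular Riemannian foliation $\mathcal{G}$ carrying a transverse $\mathfrak{a}$-action. I would use the spectral sequence obtained by filtering the Cartan complex $C_{\mathfrak{a}}(\mathcal{G})$ by symmetric degree: its first page is $E_1\cong R\otimes_{\mathbb{R}}H(\mathcal{G})$, the differentials $d_k$ are $R$-linear and raise total degree by $1$, and for degree reasons $d_k=0$ once $k$ is large enough, so the sequence collapses at a finite page $E_\infty$. By Theorem~\ref{theorem: basic cohomology is finite dimensional}, $H(\mathcal{G})$ is finite-dimensional, so $E_1$ is a finitely generated graded $R$-module; since $R$ is Noetherian, every $E_k$ is finitely generated, and so is $H_{\mathfrak{a}}(\mathcal{G})$. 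By (ii), $\chi_{\mathfrak{a}}(E_1)=\chi_{\mathfrak{a}}(E_2)=\cdots=\chi_{\mathfrak{a}}(E_\infty)$, and since $E_\infty$ is the associated graded of the (total-degree-wise finite) filtration of $H_{\mathfrak{a}}(\mathcal{G})$, these two modules have equal Hilbert series, whence $\chi_{\mathfrak{a}}(E_\infty)=\chi_{\mathfrak{a}}(H_{\mathfrak{a}}(\mathcal{G}))$. Finally $E_1$ is $R$-free on $H(\mathcal{G})$, so $Q_{E_1}(t)=P_{H(\mathcal{G})}(t)$ and $\chi_{\mathfrak{a}}(E_1)=P_{H(\mathcal{G})}(-1)=\chi(\mathcal{G})$.

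Now I would apply this to $\mathcal{G}=\mathcal{F}$ and to $\mathcal{G}=\mathcal{F}^{\mathfrak{a}}=\mathcal{F}|_{M^{\mathfrak{a}}}$. This is where neatness (Definition~\ref{definition: neat skf}) is used: it guarantees that $M^{\mathfrak{a}}$ is a closed, saturated submanifold, so that $\mathcal{F}^{\mathfrak{a}}$ is again a transversely compact, complete singular Riemannian foliation carrying the restricted transverse $\mathfrak{a}$-action, and the first step yields both $\chi(\mathcal{F})=\chi_{\mathfrak{a}}(H_{\mathfrak{a}}(\mathcal{F}))$ and $\chi(\mathcal{F}^{\mathfrak{a}})=\chi_{\mathfrak{a}}(H_{\mathfrak{a}}(\mathcal{F}^{\mathfrak{a}}))$. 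On the other hand, Theorem~\ref{theorem: borel localization} provides the degree-preserving $R$-linear pullback $i^*\colon H_{\mathfrak{a}}(\mathcal{F})\to H_{\mathfrak{a}}(\mathcal{F}^{\mathfrak{a}})$, which is an isomorphism after localizing at $S$, so fact (i) gives $\chi_{\mathfrak{a}}(H_{\mathfrak{a}}(\mathcal{F}))=\chi_{\mathfrak{a}}(H_{\mathfrak{a}}(\mathcal{F}^{\mathfrak{a}}))$. Chaining the three equalities yields $\chi(\mathcal{F})=\chi(\mathcal{F}^{\mathfrak{a}})$.

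I expect the main obstacle to be the first step, and within it the verification that in the singular setting the Cartan-model spectral sequence behaves exactly as in the regular case (that $E_1=R\otimes H(\mathcal{F})$, that the differentials are $R$-linear, and that it collapses at a finite page), and that the finiteness supplied by Theorem~\ref{theorem: basic cohomology is finite dimensional} propagates so that every module in sight is finitely generated and $\chi_{\mathfrak{a}}$ is additive throughout. A secondary, more technical point — handled by the neatness hypothesis — is confirming that $\mathcal{F}^{\mathfrak{a}}$ indeed falls within the hypotheses of Theorems~\ref{theorem: borel localization} and~\ref{theorem: basic cohomology is finite dimensional}.
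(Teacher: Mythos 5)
Your proof is correct and follows essentially the same route as the paper's. Both arguments run the Cartan-model spectral sequence with $E_1\cong\sym(\mathfrak{a}^*)\otimes H(\mathcal{F})$ (relying on Lemma~\ref{lemma: basic forms are invariant} and Theorem~\ref{theorem: basic cohomology is finite dimensional} for finite generation), observe that the $R$-rank Euler characteristic is invariant across pages of the spectral sequence, identify the resulting invariant of $E_\infty$ with that of $H_\mathfrak{a}(\mathcal{F})$, and then invoke Borel localization (Theorem~\ref{theorem: borel localization}) together with the triviality of the action on $\mathcal{F}^{\mathfrak{a}}$ (so that $H_\mathfrak{a}(\mathcal{F}^{\mathfrak{a}})=\sym(\mathfrak{a}^*)\otimes H(\mathcal{F}^{\mathfrak{a}})$). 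The only cosmetic difference is that you package the quantity $\rank_R N^{\mathrm{even}}-\rank_R N^{\mathrm{odd}}$ via the Hilbert-series numerator $Q_N(-1)$, whereas the paper manipulates the even/odd ranks directly, citing \cite[Lemma 9.2 and Corollary A.19]{goertsches3}; your fact~(i) is precisely the translation between these two descriptions. One small caveat: your ``first step'' is stated for an arbitrary transverse $\mathfrak{a}$-action, but the identification $E_1\cong R\otimes H(\mathcal{G})$ uses $\Omega(\mathcal{G})^{\mathfrak{a}}=\Omega(\mathcal{G})$, which the paper proves for singular Killing foliations (Lemma~\ref{lemma: basic forms are invariant}) and which holds trivially when the action is trivial; since you only apply the step to $\mathcal{F}$ and to $\mathcal{F}^{\mathfrak{a}}$, both of which satisfy this, the argument goes through as intended.
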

This is Theorem \ref{theorem: localization of bec}, which generalizes \cite[Theorem D]{caramello} to the singular setting. Notice that it is a singular transverse generalization of the localization of the classical Euler characteristic $\chi(M)$ to the fixed point set of a torus action, since $\mathfrak{a}$ is Abelian (see Section \ref{section: (Singular) Riemannian foliations}). It follows that, for any transversely compact, singular Killing foliation $\f$, if $\chi(\f)\neq0$, then $\f$ has at least one closed leaf (see Corollary \ref{corollary: chi detects closed leaves}). Another direct consequence is that, if $\f$ has isolated closed leaves, $\chi({\mathcal{F}})$ is precisely their number (see Corollary \ref{corollary: localization of bec finite case}). The following result is also an application of the localization theorem:
\begin{thmx}\label{theoremE}
Let ${\mathcal{F}}$ be a transversely compact, neat singular Killing foliation with structural algebra $\mathfrak{a}$. Then
$$\dim H({\mathcal{F}}^\mathfrak{a})\leq \dim H({\mathcal{F}}),$$
and equality holds if, and only if, ${\mathcal{F}}$ is equivariantly formal.
\end{thmx}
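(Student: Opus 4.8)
The plan is to deduce the statement from the Borel-type localization of Theorem~\ref{theorem: borel localization} together with an elementary rank count in the spectral sequence computing $H_{\mathfrak{a}}(\f)$. Recall that $\sym(\mathfrak{a}^*)$ is a polynomial ring, hence a Noetherian integral domain; let $K$ be its field of fractions, so that $S^{-1}\sym(\mathfrak{a}^*)=K$ with $S=\sym(\mathfrak{a}^*)\setminus\{0\}$. By Theorem~\ref{theorem: basic cohomology is finite dimensional} we have $\dim H(\f)<\infty$, and -- once one checks that $\f^{\mathfrak{a}}$ is again a transversely compact, complete singular Riemannian foliation, which follows from neatness of $\f$ (as is already used for Theorem~\ref{theorem: localization of bec}) -- also $\dim H(\f^{\mathfrak{a}})<\infty$. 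Thus all ranks appearing below are finite.

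First I would reduce the statement to a rank identity. Since every leaf of $\f^{\mathfrak{a}}$ is already closed, the transverse $\mathfrak{a}$-action on $\f^{\mathfrak{a}}$ is trivial, so $H_{\mathfrak{a}}(\f^{\mathfrak{a}})\cong H(\f^{\mathfrak{a}})\otimes\sym(\mathfrak{a}^*)$ as $\sym(\mathfrak{a}^*)$-modules, and therefore $S^{-1}H_{\mathfrak{a}}(\f^{\mathfrak{a}})\cong H(\f^{\mathfrak{a}})\otimes_{\mathbb{R}}K$ is a $K$-vector space of dimension $\dim_{\mathbb{R}}H(\f^{\mathfrak{a}})$. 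On the other hand $S^{-1}H_{\mathfrak{a}}(\f)$ is a $K$-vector space of dimension $\rank_{\sym(\mathfrak{a}^*)}H_{\mathfrak{a}}(\f)$. Theorem~\ref{theorem: borel localization} identifies these two $K$-vector spaces, so
$$\dim H(\f^{\mathfrak{a}})=\rank_{\sym(\mathfrak{a}^*)}H_{\mathfrak{a}}(\f).$$
It therefore remains to prove $\rank_{\sym(\mathfrak{a}^*)}H_{\mathfrak{a}}(\f)\le\dim H(\f)$, with equality exactly when $\f$ is equivariantly formal.

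Next I would run the standard argument in the spectral sequence of the filtration of the Cartan model $(\sym(\mathfrak{a}^*)\otimes\Omega(\f))^{\mathfrak{a}}$ by polynomial degree: its second page is $E_2\cong H(\f)\otimes\sym(\mathfrak{a}^*)$ as a bigraded $\sym(\mathfrak{a}^*)$-module, the differentials $d_r$ are $\sym(\mathfrak{a}^*)$-linear, and it converges to $H_{\mathfrak{a}}(\f)$. In particular $E_2$ is free of rank $d:=\dim H(\f)$ over the Noetherian ring $\sym(\mathfrak{a}^*)$, so each page and $H_{\mathfrak{a}}(\f)$ itself are finitely generated. Using additivity of rank over the domain $\sym(\mathfrak{a}^*)$ on the short exact sequences $0\to\ker d_r\to E_r\to\image d_r\to0$ and $0\to\image d_r\to\ker d_r\to E_{r+1}\to0$ gives $\rank E_{r+1}=\rank E_r-2\rank\image d_r$. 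Since the integers $\rank E_r$ are non-increasing and non-negative they stabilize, so $\rank E_\infty=\lim_r\rank E_r$ and only finitely many terms below are nonzero, yielding
$$\rank_{\sym(\mathfrak{a}^*)}H_{\mathfrak{a}}(\f)=\rank E_\infty=d-2\sum_{r\ge2}\rank\image d_r\ \le\ d=\dim H(\f).$$
Equality holds if and only if $\rank\image d_r=0$ for every $r\ge2$. Here the point is that $E_2$ is free, hence torsion-free, so the rank-zero submodule $\image d_2\subseteq E_2$ must vanish; then $d_2=0$, $E_3=E_2$ is again free, and inductively all $d_r$ vanish and every $E_r$ is free. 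Consequently the spectral sequence degenerates at $E_2$ and $H_{\mathfrak{a}}(\f)\cong H(\f)\otimes\sym(\mathfrak{a}^*)$ is a free $\sym(\mathfrak{a}^*)$-module, i.e.\ $\f$ is equivariantly formal; conversely, equivariant formality forces degeneration and hence equality. Combining with the identity of the previous paragraph finishes the proof.

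The main obstacle is not any single estimate -- the geometry is fully packaged into Theorem~\ref{theorem: borel localization} and the finiteness Theorem~\ref{theorem: basic cohomology is finite dimensional} -- but rather the equality case: a priori $\image d_r$ could be a nonzero torsion module of rank zero, and it is precisely the freeness of $E_2$, propagated to every later page once the earlier differentials are known to vanish, that excludes this. Two points of hygiene remain: to confirm that the spectral sequence with the stated second page is available in the singular transverse setting (filtering the Cartan complex by polynomial degree, with $d_1=0$ because $\mathfrak{a}$ is connected), and to verify that $\f^{\mathfrak{a}}$ inherits the hypotheses of Theorem~\ref{theorem: basic cohomology is finite dimensional} from neatness and transverse compactness of $\f$.
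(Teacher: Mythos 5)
Your proposal is correct and follows essentially the same route as the paper: Borel localization (Theorem~\ref{theorem: borel localization}) identifies $\dim H(\f^{\mathfrak a})$ with $\rank_{\sym(\mathfrak a^*)}H_{\mathfrak a}(\f)$, and a rank count in the spectral sequence of the Cartan model (with first nontrivial page $\sym(\mathfrak a^*)\otimes H(\f)$) gives the inequality and the equality criterion. The paper outsources the last part to \cite[Proposition 9.6]{goertsches3}, whereas you spell out the key subtlety -- that torsion-freeness of the page, propagated inductively, rules out rank-zero but nonzero images of $d_r$ -- which is exactly the point that makes the equality case work; the difference in page numbering ($E_1$ vs.\ $E_2$) is merely conventional.
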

This is Theorem \ref{theorem: comparison dimensions}, which generalizes \cite[Theorem 1]{goertsches}. In analogy to the classical case of Lie group actions, and more generally regular Killing foliations (see \cite{goertsches2}), a ``concrete'' version of Borel's localization theorem, in terms of a localization formula for the integral of basic equivariant forms to the set of closed leaves, is expected to hold. We intend to investigate this in a future article.

\section{Equivariant basic cohomology of singular foliations}

When a Lie algebra acts infinitesimally and transversely on a (regular) foliation, one can define its equivariant basic cohomology, which is an algebraic invariant that captures information from the action and the topology of the leaf space. This object was first studied in \cite{goertsches}, generalizing the ideas from classical equivariant cohomology of Lie group actions. In this section we will see that this notion also generalizes to the case of \emph{singular} foliations. Let us begin by fixing our notation and recalling the main notions and tools concerning smooth foliations. We work in the smooth category, so all objects are considered of differentiability class $\mathcal{C}^\infty$, unless otherwise explicitly stated.

\subsection{Preliminaries}

Let $M$ be an $n$-dimensional manifold. A \textit{singular foliation} of $M$ is a partition ${\mathcal{F}}$ of $M$ into connected, immersed submanifolds, called \textit{leaves}, such that the module $\mathfrak{X}({\mathcal{F}})$ of smooth vector fields that are tangent to the leaves is transitive on each $L\in{\mathcal{F}}$. This means that, for each $L\in{\mathcal{F}}$ and each $x\in L$, any given $v\in T_xL$ can be extended to a smooth vector field $V\in\mathfrak{X}({\mathcal{F}})$. We denote the leaf containing $x\in M$ by ${\mathcal{F}}(x)$. The tangent spaces of the leaves form a distribution of varying rank, that we denote by $T{\mathcal{F}}$. The \textit{dimension of ${\mathcal{F}}$} is defined as
$$\dim({\mathcal{F}})=\max_{L\in{\mathcal{F}}}\dim(L),$$
and its \textit{codimension} by $\codim({\mathcal{F}})=\dim(M)-\dim({\mathcal{F}})$. We say that ${\mathcal{F}}$ is \textit{closed} when all leaves of ${\mathcal{F}}$ are closed submanifolds of $M$. When all leaves have the same dimension we say that ${\mathcal{F}}$ is a \textit{regular foliation}, or simply a \textit{foliation}, of $M$. The space of leaves $M/{\mathcal{F}}$ of ${\mathcal{F}}$ is the quotient space of $M$ by the equivalence relation that identifies points in the same leaf. A subset $J\subset M$ is \textit{saturated} when $J=\pi^{-1}\pi(J)$, for $\pi:M\to M/{\mathcal{F}}$ the canonical projection. In other words, $J$ is saturated if it contains all the leaves that it intersects.

\begin{example}
An important class of singular foliations consists of those given by Lie group actions, which are called \textit{homogeneous}: if a Lie group $H$ acts smoothly on $M$, then the partition of $M$ by the connected components of the orbits of $H$ is a singular foliation ${\mathcal{F}}_H$. If all stabilizers of this action have the same dimension, in particular if the action is locally free, then ${\mathcal{F}}_H$ is a regular foliation.
\end{example}

If $(N,\g)$ is another singular foliation, we say that a smooth map $f:(M,{\mathcal{F}})\to (N,\g)$ is \textit{foliate} when it maps leaves of ${\mathcal{F}}$ into leaves of $\g$. A smooth map $f:M\times[0,1]\to N$ is a \textit{foliate homotopy} when it is foliate with respect to the product foliation of ${\mathcal{F}}$ and the trivial foliation of $[0,1]$ by points. Alternatively, $f$ is a foliate homotopy (between $f_0$ and $f_1$) if $f_t:M\ni x\mapsto f(x,t)\in N$ is foliate for each $t\in[0,1]$. If $J\subset M$ is a saturated submanifold, then a \textit{foliate deformation retraction} $f:M\times[0,1]\to M$ of $M$ onto $J$ is a foliate homotopy which is also a deformation retraction of $M$ onto $J$.

A vector field $X\in\mathfrak{X}(M)$ is \textit{foliate} when $[X,Y]\in \mathfrak{X}({\mathcal{F}})$ for any $Y\in \mathfrak{X}({\mathcal{F}})$. These are the vector fields whose flows are foliate maps. We denote the Lie algebra of foliate vector fields by $\mathfrak{L}({\mathcal{F}})$. The \textit{transverse vector fields} are the elements of $\mathfrak{l}({\mathcal{F}}):=\mathfrak{L}({\mathcal{F}})/\mathfrak{X}({\mathcal{F}})$. Notice that $\mathfrak{l}({\mathcal{F}})$ inherits the Lie bracket from $\mathfrak{L}({\mathcal{F}})$ and is, thus, also a Lie algebra. A transverse vector field $X$ is \textit{tangent} to a saturated submanifold $J\subset M$ when it admits a representative $\tilde{X}\in\mathfrak{L}({\mathcal{F}})$ which is tangent to $J$. Notice that in this case any representative of $X$ is tangent to $J$. Let $f:(M,{\mathcal{F}})\to (N,\g)$ be foliate, $X\in\mathfrak{l}({\mathcal{F}})$ and $Y\in\mathfrak{l}(\g)$. We say that $X$ and $Y$ are \textit{$f$-related} when they admit $f$-related foliate representatives. In the case of a foliate map $f:(M,{\mathcal{F}})\to (M,{\mathcal{F}})$, we say that $X\in\mathfrak{l}({\mathcal{F}})$ is $f$-invariant when it is $f$-related to itself.

A tensor field $\xi\in\mathcal{T}^k(M)$ is called \textit{basic} when it satisfies $\iota_X\xi=0$ and $\mathcal{L}_X\xi=0$ whenever $X\in\mathfrak{X}({\mathcal{F}})$. We can consider, in particular, basic differential forms. By Cartan's formula $\mathcal{L}_X=\iota_Xd+d\iota_X$, a form $\omega\in\Omega^k(M)$ is basic if and only if, $i_X\omega=0$ and $i_X(d\omega)=0$, for all $X\in\mathfrak{X}({\mathcal{F}})$. The space of basic $k$-forms of ${\mathcal{F}}$ is denoted by $\Omega^k({\mathcal{F}})$. Then
$$\Omega({\mathcal{F}}):=\bigoplus_{k=0}^q\Omega^k({\mathcal{F}})$$
is the \textit{algebra of basic forms} of ${\mathcal{F}}$. It is clear that $\Omega({\mathcal{F}})$ is closed under the usual exterior derivative $d$, so it is a $\mathbb{Z}$-graded differential algebra (by defining $\Omega^k({\mathcal{F}})=\{0\}$ if $k<0$).

The cohomology groups of the complex
$$\cdots \stackrel{d}{\longrightarrow} \Omega^{k-1}({\mathcal{F}}) \stackrel{d}{\longrightarrow} \Omega^k({\mathcal{F}}) \stackrel{d}{\longrightarrow} \Omega^{k+1}({\mathcal{F}}) \stackrel{d}{\longrightarrow} \cdots .$$
are the \textit{basic cohomology groups} of ${\mathcal{F}}$, denoted $H^i({\mathcal{F}})$. We denote $H({\mathcal{F}})=\bigoplus H^i({\mathcal{F}})$, which is a graded algebra with the usual exterior product. Notice that when ${\mathcal{F}}$ is the trivial foliation by points, $H({\mathcal{F}})$ reduces to the De Rham cohomology of $M$. In general, we can intuitively think of $H({\mathcal{F}})$ as the De Rham cohomology of $M/{\mathcal{F}}$. If $f:(M,{\mathcal{F}})\to(N,\g)$ is foliate, then it pulls $\g$-basic forms back to ${\mathcal{F}}$-basic forms, and thus induces a linear map $f^*:H(\g)\to H({\mathcal{F}})$. For ${\mathcal{F}}$-saturated open sets $U,V\subset M$, the short exact sequence 
$$0\longrightarrow \Omega({\mathcal{F}}{\vert}_{U\cup V})\stackrel{i_U^*\oplus i_V^*}{\longrightarrow}\Omega({\mathcal{F}}{\vert}_U)\oplus \Omega({\mathcal{F}}{\vert}_V)\stackrel{j_U^*-j_V^*}{\longrightarrow} \Omega({\mathcal{F}}{\vert}_{U\cap V})\longrightarrow 0.$$
induces a Mayer--Vietoris sequence in basic cohomology, where $i_U:U\to U\cup V$, $i_V:V\to U\cup V$, $j_U:U\cap V\to U$ and $j_V:U\cap V\to V$ are the natural inclusions.

When $\dim H({\mathcal{F}})<\infty$, we define the \textit{basic Euler characteristic} of ${\mathcal{F}}$ as
$$\chi({\mathcal{F}})=\sum_i(-1)^i\dim H^i({\mathcal{F}}).$$
Basic cohomology $H({\mathcal{F}})$ can in fact be infinite-dimensional, even for a regular foliation ${\mathcal{F}}$ on a compact manifold (see \cite{ghys2}). In Theorem \ref{theorem: basic cohomology is finite dimensional} we will see sufficient conditions for it to be finite-dimensional.

\subsection{Equivariant basic cohomology}\label{section: equivariant basic cohomology}

Our definition of the equivariant basic cohomology of a singular foliation will be a direct generalization of that in the regular case, which was introduced in \cite{goertsches} via the concept of $\mathfrak{g}^\star$-algebras. We also refer to \cite{guillemin} for a thorough introduction to the topic of equivariant cohomology. Let us begin by recalling that a \textit{$\mathfrak{g}^\star$-algebra} consists of a differential $\mathbb{Z}$-graded commutative algebra $(A, d)$ acted upon by a finite-dimensional Lie algebra $\mathfrak{g}$, that is, for each $X\in\mathfrak{g}$ there are derivations $\mathcal{L}_X:A\to A$ and $\iota_X:A\to A$, depending linearly on $X$ and of degree $0$ and $-1$ respectively, satisfying
\begin{enumerate}
    \item $\iota_X^2=0$,
    \item $\mathcal{L}_X\mathcal{L}_Y-\mathcal{L}_Y\mathcal{L}_X=\mathcal{L}_{[X,Y]}$,
    \item $\mathcal{L}_X\iota_Y+\iota_Y\mathcal{L}_X=\iota_{[X,Y]}$,
    \item $\mathcal{L}_X=d \iota_X+\iota_X d$.
\end{enumerate}
If $A$ and $B$ are $\mathfrak{g}^\star$-algebras, an algebra morphism $f:A\to B$ is a morphism of $\mathfrak{g}^\star$-algebras if it commutes, in the $\mathbb{Z}$-graded sense, with $d$, $\mathcal{L}_X$ and $\iota_X$.

Now let $(M,{\mathcal{F}})$ be a singular foliation and $\mathfrak{g}$ a finite-dimensional Lie algebra. An \textit{infinitesimal transverse action} of $\mathfrak{g}$ on ${\mathcal{F}}$ is a Lie algebra homomorphism $\mu:\mathfrak{g}\to \mathfrak{l}({\mathcal{F}})$. The isotropy $\mathfrak{g}_x=\{X\in \mathfrak{g}\ {\vert}\ \mu(X)_x=0\}$ is constant for points in the same leaf, so we define $\mathfrak{g}_L=\mathfrak{g}_x$, for $x\in L\in {\mathcal{F}}$, which is the infinitesimal transverse analog of the stabilizer of a point. More generally, for a saturated submanifold $J\subset M$, consider the \textit{isotropy subalgebra}
$$\mathfrak{g}_J=\{X\in \mathfrak{g}\ {\vert}\ \mu(X){\vert}_J=0\}=\bigcap_{x\in J} \mathfrak{g}_x.$$
Notice that $\mathfrak{g}_L=\mathfrak{g}_{\overline{L}}$, for $L\in {\mathcal{F}}$. Somewhat conversely, for $\mathfrak{h}<\mathfrak{g}$, the \textit{$\mathfrak{h}$-fixed locus}
$$M^{\mathfrak{h}}=\{x\in M\ {\vert}\ \mu(X)_x=0\ \mbox{for all}\ X\in\mathfrak{h}\}$$
is saturated (but not a submanifold in general). A saturated submanifold $J\subset M$ is \textit{$\mathfrak{h}$-invariant} when $\mu(X)$ is tangent to $J$, for each $X\in \mathfrak{h}$.

Let us prove that an infinitesimal transverse action induces a $\mathfrak{g}^\star$-algebra structure on $\Omega({\mathcal{F}})$. More precisely, for $X\in \mathfrak{g}$, consider the derivations $\mathcal{L}_X$ and $\iota_X$ on $\Omega({\mathcal{F}})$ given by $\mathcal{L}_X\omega=\mathcal{L}_{\tilde{X}}\omega$ and $\iota_{X}\omega=\iota_{\tilde{X}}\omega$, where $\tilde{X}\in\mathfrak{L}({\mathcal{F}})$ is a foliate representative of $\mu(X)$. One readily checks that they are well defined: if $\tilde{X}_1,\tilde{X}_2\in\mathfrak{L}({\mathcal{F}})$ are different representatives for $\mu(X)$, then $\tilde{X}_1-\tilde{X}_2\in\mathfrak{X}({\mathcal{F}})$, thus $\mathcal{L}_{\tilde{X}_1-\tilde{X}_2}\omega=0$ and $\iota_{\tilde{X}_1-\tilde{X}_2}\omega=0$ for any $\omega\in\Omega({\mathcal{F}})$. It is clear that $\mathcal{L}_X$ and $\iota_X$ have degree $0$ and $-1$, respectively.

\begin{proposition}\label{proposition: g star algebra stricture on basic complex}
Let $(M,{\mathcal{F}})$ be a singular foliation and $\mu:\mathfrak{g}\to \mathfrak{l}({\mathcal{F}})$ an infinitesimal transverse action. Then, with the derivations $\mathcal{L}_X$ and $\iota_{X}$ defined above, $(\Omega({\mathcal{F}}),d)$ becomes a $\mathfrak{g}^\star$-algebra.
\end{proposition}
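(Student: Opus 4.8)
The plan is to first check that the derivations $\mathcal{L}_X$ and $\iota_X$ actually preserve the subspace $\Omega(\mathcal{F})\subset\Omega(M)$, and then to deduce the four $\mathfrak{g}^\star$-algebra axioms from the corresponding identities of the usual Cartan calculus on $\Omega(M)$, restricted to this subspace. Fix $X,Y\in\mathfrak{g}$ and foliate representatives $\tilde X,\tilde Y\in\mathfrak{L}(\mathcal{F})$ of $\mu(X),\mu(Y)$. Since $\mathcal{L}_X$ and $\iota_X$ are already known to be independent of the chosen representative, and since $\tilde X+\tilde Y$ and $c\tilde X$ represent $\mu(X+Y)$ and $\mu(cX)$, the assignments $X\mapsto\mathcal{L}_X$ and $X\mapsto\iota_X$ are automatically linear and of degrees $0$ and $-1$.

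The crux is to show that $\iota_X$ sends basic forms to basic forms; stability of $\Omega(\mathcal{F})$ under $\mathcal{L}_X$ then follows from Cartan's formula $\mathcal{L}_X=d\iota_X+\iota_X d$, together with the already-noted fact that $d$ preserves $\Omega(\mathcal{F})$ (write $\mathcal{L}_X\omega=d\iota_X\omega+\iota_X d\omega$ and note both summands are basic once $\iota_X$ is known to preserve $\Omega(\mathcal{F})$). So let $\omega\in\Omega(\mathcal{F})$ and $Z\in\mathfrak{X}(\mathcal{F})$. Anticommutativity of contractions and $\iota_Z\omega=0$ give $\iota_Z(\iota_X\omega)=-\iota_{\tilde X}\iota_Z\omega=0$. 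For the second basic condition, expand $d\iota_{\tilde X}\omega=\mathcal{L}_{\tilde X}\omega-\iota_{\tilde X}d\omega$, so that $\iota_Z d\iota_{\tilde X}\omega=\iota_Z\mathcal{L}_{\tilde X}\omega-\iota_Z\iota_{\tilde X}d\omega$; the last term vanishes since $\iota_Z\iota_{\tilde X}d\omega=-\iota_{\tilde X}\iota_Z d\omega$ and $\iota_Z d\omega=0$. For the first term, the identity $[\mathcal{L}_{\tilde X},\iota_Z]=\iota_{[\tilde X,Z]}$ yields $\iota_Z\mathcal{L}_{\tilde X}\omega=\mathcal{L}_{\tilde X}(\iota_Z\omega)-\iota_{[\tilde X,Z]}\omega$, which is $0$ because $\iota_Z\omega=0$ and, crucially, $[\tilde X,Z]\in\mathfrak{X}(\mathcal{F})$ — this is exactly the defining property of a foliate vector field — so $\iota_{[\tilde X,Z]}\omega=0$ as well. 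Hence $\iota_X\omega\in\Omega(\mathcal{F})$.

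It remains to verify the axioms on $\Omega(\mathcal{F})$. Axioms $(1)$ and $(4)$ hold because they already hold on $\Omega(M)$ for $\iota_{\tilde X}$ and $\mathcal{L}_{\tilde X}$, using again that $d$ restricts to $\Omega(\mathcal{F})$. For $(2)$ and $(3)$, the point is that $[\tilde X,\tilde Y]$ is a legitimate foliate representative of $\mu([X,Y])$: since $\mathfrak{L}(\mathcal{F})$ is a Lie algebra we have $[\tilde X,\tilde Y]\in\mathfrak{L}(\mathcal{F})$, and since $\mu$ is a Lie algebra homomorphism its class in $\mathfrak{l}(\mathcal{F})=\mathfrak{L}(\mathcal{F})/\mathfrak{X}(\mathcal{F})$ equals $[\mu(X),\mu(Y)]=\mu([X,Y])$. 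Therefore $\mathcal{L}_{[X,Y]}=\mathcal{L}_{[\tilde X,\tilde Y]}$ and $\iota_{[X,Y]}=\iota_{[\tilde X,\tilde Y]}$ on $\Omega(\mathcal{F})$, and axioms $(2)$, $(3)$ follow by restricting the classical identities $[\mathcal{L}_{\tilde X},\mathcal{L}_{\tilde Y}]=\mathcal{L}_{[\tilde X,\tilde Y]}$ and $[\mathcal{L}_{\tilde X},\iota_{\tilde Y}]=\iota_{[\tilde X,\tilde Y]}$ to the now-invariant subspace $\Omega(\mathcal{F})$.

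The main obstacle is the stability step: everything else is bookkeeping with standard Cartan identities, but closure of $\Omega(\mathcal{F})$ under $\iota_X$ genuinely relies on choosing the representative $\tilde X$ inside $\mathfrak{L}(\mathcal{F})$, so that $[\tilde X,Z]\in\mathfrak{X}(\mathcal{F})$. This is what makes the $\mathfrak{g}^\star$-structure well posed in the singular setting, where $\mathfrak{X}(\mathcal{F})$ is merely a module of vector fields of varying rank rather than the sections of a subbundle.
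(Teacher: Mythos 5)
Your proof is correct and follows essentially the same approach as the paper's: identify that the only non-formal issue is that $\mathcal{L}_X$ and $\iota_X$ preserve $\Omega(\mathcal{F})$, and verify this using the fact that $[\tilde X,Z]\in\mathfrak{X}(\mathcal{F})$ for foliate $\tilde X$ and tangent $Z$. The small variation is that you check that $\iota_X\omega$ is basic via the $(\iota_Z, \iota_Z d)$-characterization and then deduce stability under $\mathcal{L}_X$ from Cartan's formula, whereas the paper checks both conditions of the $(\iota_Z,\mathcal{L}_Z)$-characterization directly for each of $\iota_X\omega$ and $\mathcal{L}_X\omega$; the two routes are equivalent and equally elementary.
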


\begin{proof}
The algebraic properties that $\mathcal{L}_X$, $\iota_X$ and $d$ must satisfy all follow directly from the same properties for the usual Lie derivative, interior multiplication and exterior derivative. We thus only have to show that $\mathcal{L}_X\omega$ and $\iota_X\omega$ indeed remain in $\Omega({\mathcal{F}})$ when $\omega\in\Omega({\mathcal{F}})$. In fact, for $Y\in\mathfrak{X}({\mathcal{F}})$, we have $\iota_Y\iota_{\tilde{X}}\omega=-\iota_{\tilde{X}}\iota_Y\omega=0$ (because $\iota_Y\omega=0$) and $\mathcal{L}_Y\iota_{\tilde{X}}\omega=\iota_{\tilde{X}}\mathcal{L}_Y\omega+\iota_{[Y,\tilde{X}]}\omega=0$ (because $\mathcal{L}_Y\omega=0$ and $[Y,{\tilde{X}}]\in\mathfrak{X}({\mathcal{F}})$), so $\iota_X\omega\in\Omega({\mathcal{F}})$. Similarly, $\iota_Y\mathcal{L}_{\tilde{X}}\omega=\mathcal{L}_{\tilde{X}}\iota_Y\omega-\iota_{[\tilde{X},Y]}\omega=0$ and $\mathcal{L}_Y\mathcal{L}_{\tilde{X}}\omega=\mathcal{L}_{\tilde{X}}\mathcal{L}_Y\omega-\mathcal{L}_{[\tilde{X},Y]}\omega=0$, hence $\mathcal{L}_X\omega\in\Omega({\mathcal{F}})$.
\end{proof}

Given Proposition \ref{proposition: g star algebra stricture on basic complex}, we can now define the equivariant basic cohomology of ${\mathcal{F}}$ in the usual Cartan's model way, which we recall here for the reader's convenience (see also \cite{guillemin} or \cite{goertsches}). Let $\sym(\mathfrak{g}^*)$ denote the symmetric algebra over the dual $\mathfrak{g}^*$, which we interpret as the polynomial algebra on $\mathfrak{g}^*$. Consider the coadjoint action of $\mathfrak{g}$ on $\mathfrak{g}^*$ given by $(\mathrm{ad}^*_{X}\phi)(Y)=\phi(-[X,Y])$, for $X,Y\in\mathfrak{g}$ and $\phi\in\mathfrak{g}^*$. It extends naturally to $\sym(\mathfrak{g}^*)$ by $(\mathrm{ad}^*_{X}f)(Y)=f(-[X,Y])$. For $X\in\mathfrak{g}$ and $\omega=\sum f_i\otimes \omega_i \in \sym(\mathfrak{g}^*)\otimes \Omega({\mathcal{F}})$, we define
$$X\omega=\sum \left(\mathrm{ad}^*_{X}f_i\otimes \omega_i + f_i\otimes \mathcal{L}_X\omega_i\right).$$
The \textit{basic Cartan complex of $({\mathcal{F}},\mathfrak{g})$} is the subspace
$$C_{\mathfrak{g}}({\mathcal{F}})=(\sym(\mathfrak{g}^*)\otimes \Omega({\mathcal{F}}))^{\mathfrak{g}}$$
of $\mathfrak{g}$-invariant elements $\omega$, that is, those satisfying $X\omega=0$. Notice that an element $\omega\in C_{\mathfrak{g}}({\mathcal{F}})$ can be seen as a $\mathfrak{g}$-equivariant polynomial map $\mathfrak{g}\to \Omega({\mathcal{F}})$. Moreover, if $\mathfrak{g}$ is Abelian then its coadjoint action is trivial, so $\omega\in C_{\mathfrak{g}}({\mathcal{F}})$ is just a polynomial map $\mathfrak{g}\to \Omega({\mathcal{F}})^\mathfrak{g}$.

The basic Cartan complex $C_{\mathfrak{g}}({\mathcal{F}})$ is naturally an $\sym(\mathfrak{g}^*)^{\mathfrak{g}}$-algebra, with ring multiplication $(\omega\eta)(X)=\omega(X)\wedge\eta(X)$ and $\sym(\mathfrak{g}^*)^{\mathfrak{g}}$-module multiplication induced by $f\mapsto f\otimes 1$.  Moreover, it is a cochain complex with coboundary maps $d_\mathfrak{g}$, the \textit{equivariant derivative}, given by
$$(d_\mathfrak{g}\omega)(X)=d(\omega(X))-\iota_X(\omega(X)),$$
as one checks that this is a degree $1$ derivation with respect to the usual grading
\begin{equation}\label{grading of the Cartan complex}
C_{\mathfrak{g}}^n(\Omega({\mathcal{F}}))=\bigoplus_{2k+l=n}(\sym^k(\mathfrak{g}^*)\otimes \Omega^l({\mathcal{F}}))^{\mathfrak{g}}.
\end{equation}
We define the \textit{$\mathfrak{g}$-equivariant basic cohomology of ${\mathcal{F}}$} as the cohomology of its Cartan complex:
$$H_\mathfrak{g}({\mathcal{F}})=H(C_\mathfrak{g}(\Omega({\mathcal{F}}),d_\mathfrak{g})).$$
It inherits an $\sym(\mathfrak{g}^*)^{\mathfrak{g}}$-algebra structure from $C_{\mathfrak{g}}({\mathcal{F}})$. When $\mathfrak{g}=0$ is the trivial Lie algebra, $H_{\mathfrak{g}}({\mathcal{F}})$ is just the usual basic cohomology of ${\mathcal{F}}$. Another simple, but important particular case, is the following.

\begin{example}\label{example: equivariant cohomology of trivial action}
Suppose the transverse action of $\mathfrak{g}$ on ${\mathcal{F}}$ is trivial. Then one easily sees that $C_{\mathfrak{g}}({\mathcal{F}})=\sym(\mathfrak{g}^*)^{\mathfrak{g}}\otimes \Omega({\mathcal{F}})$ and $(d_\mathfrak{g}\omega)(X)=d(\omega(X))$, so
$$H_{\mathfrak{g}}({\mathcal{F}})\cong \sym(\mathfrak{g}^*)^{\mathfrak{g}}\otimes H({\mathcal{F}}),$$
as $\sym(\mathfrak{g}^*)^{\mathfrak{g}}$-algebras.
\end{example}

\subsection{Homotopy invariance}

Let us now study the invariance of equivariant basic cohomology under homotopies, so let $(M,{\mathcal{F}})$ and $(N,\g)$ be singular foliations with transverse $\mathfrak{g}$-actions $\mu:\mathfrak{g}\to\mathfrak{l}({\mathcal{F}})$ and $\nu:\mathfrak{g}\to\mathfrak{l}(\g)$, respectively. A foliate map $f:M\to N$ is called \textit{$\mathfrak{g}$-equivariant} if $\mu(X)$ and $\nu(X)$ are $f$-related, for all $X\in\mathfrak{g}$. In this case it is easy to check that the pullback $f^*:\Omega(\g)\to\Omega({\mathcal{F}})$ is a degree $0$ morphism of $\mathfrak{g}^\star$-algebras. In fact, it is clear that it commutes with $d$, and a straightforward calculation shows that it also commutes with each $\iota_X$. The commutation with $\mathcal{L}_X$ then follows easily using Cartan's formula or also by a direct computation. Hence, we obtain an induced morphism in equivariant basic cohomology $(f^*)_*:H_\mathfrak{g}(\g)\to H_\mathfrak{g}({\mathcal{F}})$, which we will often denote simply by $f^*$ when there is no risk of confusion. A foliate homotopy $f:M\times[0,1]\to N$ is \textit{$\mathfrak{g}$-equivariant} if $f_t$ is $\mathfrak{g}$-equivariant for each $t\in[0,1]$.

We recall that, for $\mathfrak{g}^\star$-algebras $A$ and $B$, two morphisms $f_0,f_1:A\to B$ are \textit{chain homotopic} when there is a linear map $Q:A\to B$ of degree $-1$ that satisfies
$$\iota_XQ+Q\iota_X=0,\ \ \ \mathcal{L}_XQ-Q\mathcal{L}_X=0\ \ \mbox{and}\ \ dQ+Qd=f_1-f_0$$
for all $X\in\mathfrak{g}$. This implies that the induced maps on equivariant cohomology are equal, that is $(f_0)_*=(f_1)_*:H_{\mathfrak{g}}(A)\to H_{\mathfrak{g}}(B)$ (see \cite[Proposition 2.4.1]{guillemin}). The following proposition is a generalization of \cite[Lemma 2.5.1]{lin} to singular foliations, and the proof is an adaptation of a classical proof of the homotopy invariance of equivariant cohomology \cite[p. 22]{guillemin}.

\begin{proposition}[Homotopy invariance of equivariant basic cohomology]\label{proposition: Homotopy invariance of equivariant basic cohomology}
Let $(M,{\mathcal{F}})$ and $(N,\g)$ be singular foliations acted upon transversely by a Lie algebra $\mathfrak{g}$, and let $f:M\times[0,1]\to N$ be a foliate $\mathfrak{g}$-equivariant homotopy. Then
$$f_0^*=f_1^*:H_{\mathfrak{g}}(\g)\longrightarrow H_{\mathfrak{g}}({\mathcal{F}}).$$
\end{proposition}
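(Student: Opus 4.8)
The plan is to reduce the statement, exactly as in the non-equivariant case and as in \cite[Lemma 2.5.1]{lin}, to the general algebraic fact recalled above (from \cite[Proposition 2.4.1]{guillemin}): two chain homotopic morphisms of $\mathfrak g^\star$-algebras induce the same map in equivariant cohomology. So the whole task is to produce an explicit chain homotopy $Q\colon\Omega(\g)\to\Omega(\f)$ between the $\mathfrak g^\star$-morphisms $f_0^*$ and $f_1^*$, built from fibre integration over the interval.

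First I would fix the ambient picture. Endow $M\times[0,1]$ with the product foliation $\widehat{\f}$, whose leaves are $L\times\{t\}$, and let $\mathfrak g$ act transversely on $\widehat{\f}$ through the lifts $(\widetilde X,0)$, where $\widetilde X\in\mathfrak{L}(\f)$ is a foliate representative of $\mu(X)$ and the second coordinate records the vanishing $\partial_t$-component. Since $f$ is a foliate homotopy, $\bar f\colon M\times[0,1]\to N$, $\bar f(x,t)=f(x,t)$, is foliate from $\widehat{\f}$ to $\g$; and since each $f_t$ is $\mathfrak g$-equivariant and $\bar f$ restricted to a slice $M\times\{t\}$ is $f_t$, one gets that $(\widetilde X,0)$ and $\nu(X)$ are $\bar f$-related for every $X$ (the $\partial_t$-direction is irrelevant since it is never paired against $\g$-basic forms). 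Hence $\bar f$ is $\mathfrak g$-equivariant, so by the discussion preceding the proposition $\bar f^*\colon\Omega(\g)\to\Omega(\widehat{\f})$ is a degree-$0$ morphism of $\mathfrak g^\star$-algebras.

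Next I would introduce the integration operator. Every $\omega\in\Omega(\widehat{\f})$ decomposes uniquely as $\omega=\alpha_t+\dif t\wedge\beta_t$ with $\alpha_t,\beta_t\in\Omega(\f)$ depending smoothly on $t\in[0,1]$; this is because $\mathfrak X(\widehat{\f})$ is generated over $\mathcal{C}^\infty(M\times[0,1])$ by the lifts $(\widetilde Y,0)$, $\widetilde Y\in\mathfrak X(\f)$, so $\omega$ is $\widehat{\f}$-basic precisely when $\alpha_t$ and $\beta_t$ are $\f$-basic for every $t$. Define $Q_0\colon\Omega(\widehat{\f})\to\Omega(\f)$ of degree $-1$ by $Q_0\omega=\int_0^1\beta_t\,\dif t$. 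The classical computation yields $dQ_0+Q_0 d=\iota_1^*-\iota_0^*$, where $\iota_t\colon M\to M\times[0,1]$ is $x\mapsto(x,t)$; and since $(\widetilde X,0)$ has no $\partial_t$-component, a direct check on the decomposition gives $\iota_XQ_0+Q_0\iota_X=0$ and $\mathcal L_XQ_0-Q_0\mathcal L_X=0$. Now set $Q:=Q_0\circ\bar f^*$. From $\bar f\circ\iota_t=f_t$, hence $\iota_t^*\circ\bar f^*=f_t^*$, and from the fact that $\bar f^*$ commutes with $d$, $\iota_X$ and $\mathcal L_X$, we obtain $dQ+Qd=f_1^*-f_0^*$, $\iota_XQ+Q\iota_X=0$ and $\mathcal L_XQ-Q\mathcal L_X=0$; that is, $Q$ is the required chain homotopy. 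Extending $Q$ to $\sym(\mathfrak g^*)\otimes\Omega(\g)$ as $\id\otimes Q$ — which preserves the $\mathfrak g$-invariant subcomplexes since $Q$ commutes with $\mathcal L_X$ — and invoking \cite[Proposition 2.4.1]{guillemin} finishes the proof.

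I expect the only point needing genuine care to be the splitting of $\widehat{\f}$-basic forms used to define $Q_0$: one must confirm that, for the \emph{singular} product foliation, basicness of $\omega$ is equivalent to basicness of both pieces $\alpha_t,\beta_t$ for each $t$, with smooth dependence on $t$, so that $Q_0$ is well defined and indeed lands in $\Omega(\f)$. Everything past that is the standard $\mathfrak g^\star$-algebra bookkeeping of \cite[p.~22]{guillemin}, which goes through verbatim once $\bar f^*$ is known to be a $\mathfrak g^\star$-morphism.
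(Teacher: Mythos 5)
Your argument is correct and ultimately builds the same chain homotopy $Q$ as the paper's, but organizes the work differently. The paper works directly with the velocity field $V_t$ of the homotopy and the identity $\tfrac{d}{dt}f_t^*\omega=f_t^*(\iota_{V_t}d\omega)+df_t^*(\iota_{V_t}\omega)$, sets $Q\omega=\int_0^1 f_t^*(\iota_{V_t}\omega)\,dt$, and then verifies the $\mathfrak g^\star$-commutation relations by direct pointwise computation. You instead factor through the product foliation $\widehat{\f}$ on $M\times[0,1]$: you check that $\bar f^*$ is a $\mathfrak g^\star$-morphism, that fibre integration $Q_0$ along $[0,1]$ is a $\mathfrak g^\star$-chain homotopy between $\iota_0^*$ and $\iota_1^*$, and set $Q=Q_0\circ\bar f^*$. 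Since $\bar f^*\omega=f_t^*\omega+\dif t\wedge f_t^*(\iota_{V_t}\omega)$, your $\beta_t$ is exactly the paper's $f_t^*(\iota_{V_t}\omega)$, so the two $Q$'s coincide. The gain is modularity (the statement about $Q_0$ on $\widehat{\f}$ is reusable); the cost is setting up the product foliation, its lifted transverse action, and the $\mathfrak g$-equivariancy of $\bar f$. One caveat worth flagging: your justification that $\mathfrak X(\widehat{\f})$ is generated over $\mathcal C^\infty(M\times[0,1])$ by the lifts $(\widetilde Y,0)$ is not automatic for a singular foliation in the paper's sense, which only posits transitivity of $\mathfrak X(\f)$, not local finite generation. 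You do not actually need it: any $Z\in\mathfrak X(\widehat{\f})$ has vanishing $\partial_t$-component and each time slice $Z_t$ lies in $\mathfrak X(\f)$, and the equivalence between $\widehat{\f}$-basicness of $\omega$ and $\f$-basicness of $\alpha_t,\beta_t$ for every $t$ then follows from a slice-by-slice pointwise check of the contraction and Lie-derivative conditions.
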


\begin{proof}
As we saw above, the conclusion will follow if we show that $f_0^*:\Omega(\g)\to\Omega({\mathcal{F}})$ and $f_1^*:\Omega(\g)\to\Omega({\mathcal{F}})$ are chain homotopic. For each $t\in [0,1]$, consider $V_t:M\to TN$ given by
$$V_t(x)=\left.\frac{d}{dt}f(x,s)\right\vert_{s=t},$$
and for $\omega\in\Omega^{k+1}(\g)$, let $f_t^*(\iota_{V_t}\omega)\in\Omega^k(M)$ denote the form
$$f_t^*(\iota_{V_t}\omega)_x(v_1,\dots,v_k)=\omega_{f_t(x)}(V_t(x),\dif f_t(v_1),\dots,\dif f_t(v_k)).$$
One has the identity (see \cite[p. 158]{guillemin2})
\begin{equation}\label{eq: homotopy invariance}\frac{d}{dt}f_t^*\omega=f_t^*(\iota_{V_t}d\omega)+df_t^*(\iota_{V_t}\omega).\end{equation}
We claim that $f_t^*(\iota_{V_t}\omega)\in\Omega^k({\mathcal{F}})$. In fact, since each $f_t$ is foliate, $\dif f_t$ maps $T{\mathcal{F}}$ into $T\g$, and then it follows easily that $\iota_Xf_t^*(\iota_{V_t}\omega)=0$ when $X\in\mathfrak{X}({\mathcal{F}})$. Hence, as $d\omega \in\Omega^{k+2}(\g)$, we also have $\iota_Xf_t^*(\iota_{V_t}d\omega)=0$, and from equation \eqref{eq: homotopy invariance} we have
$$\iota_Xdf_t^*(\iota_{V_t}\omega)=\iota_X\frac{d}{dt}f_t^*\omega-\iota_Xf_t^*(\iota_{V_t}d\omega)=0.$$

Now notice that integrating equation \eqref{eq: homotopy invariance} we obtain
$$f_1^*\omega-f_0^*\omega=\int_0^1\frac{d}{dt}f_t^*\omega\;dt=\int_0^1f_t^*(\iota_{V_t}d\omega)\;dt+\int_0^1 df_t^*(\iota_{V_t}\omega)\;dt,$$
so if we define $Q:\Omega(\g)\to\Omega({\mathcal{F}})$ by
$$Q\omega=\int_0^1f_t^*(\iota_{V_t}\omega)\;dt,$$
then it is a degree $-1$ linear map satisfying $f_1^*-f_0^*=dQ+Qd$.

It remains to show that $Q$ commutes with $\iota_X$ and $\mathcal{L}_X$, for each $X\in\mathfrak{g}$. Choose $\tilde{X}^M\in\mathfrak{L}({\mathcal{F}})$ and $\tilde{X}^N\in\mathfrak{L}(\g)$ to be $f_t$-related foliate representatives of the transverse fields corresponding to the action of $X$ on ${\mathcal{F}}$ and $\g$, respectively. Then
\begin{align*}
f_t^*(\iota_{V_t}\iota_X\omega)_x(v_1,...,v_{k-2}) & =  (\iota_X\omega)_{f_t(x)}(V_t(x),\dif (f_t)_xv_1,\dots, \dif (f_t)_xv_{k-2})\\
 & =  \omega_{f_t(x)}(\tilde{X}^N_{f_t(x)},V_t(x),\dif (f_t)_xv_1,\dots, \dif (f_t)_xv_{k-2})\\
 & =  -\omega_{f_t(x)}(V_t(x),\dif (f_t)_x\tilde{X}^M_x,\dif (f_t)_xv_1,\dots, \dif (f_t)_xv_{k-2})\\
 & =  -f_t^*(\iota_{V_t}\omega)_x(\tilde{X}^M_x,v_1,\dots, v_{k-2})\\
 & =  -\iota_Xf_t^*(\iota_{V_t}\omega)_x(v_1,...,v_{k-2}).
\end{align*}
Hence the equality
\begin{equation}\label{eq: homotopy invariance 2} f_t^*(\iota_{V_t}\iota_X\omega)= -\iota_Xf_t^*(\iota_{V_t}\omega),\end{equation}
which integrated yields $\iota_XQ+Q\iota_X=0$. Now from Cartan's formula and equations \eqref{eq: homotopy invariance} and \eqref{eq: homotopy invariance 2},
\begin{align*}
\mathcal{L}_Xf_t^*(\iota_{V_t}\omega) & = \iota_X df_t^*(\iota_{V_t}\omega)+d\iota_Xf_t^*(\iota_{V_t}\omega)\\
 & = \iota_X\frac{d}{dt}f_t^*\omega-\iota_Xf_t^*(\iota_{V_t}d\omega)+d\iota_Xf_t^*(\iota_{V_t}\omega)\\
 & = \frac{d}{dt}f_t^*\iota_X\omega+f_t^*(\iota_{V_t}\iota_Xd\omega)-df_t^*(\iota_{V_t}\iota_X\omega)\\
 & = f^*_t(\iota_{V_t}d\iota_X\omega)+f_t^*(\iota_{V_t}\iota_Xd\omega)\\
 & = f^*_t(\iota_{V_t}\mathcal{L}_X\omega).
\end{align*}
Integrating this equation we get $\mathcal{L}_XQ-Q\mathcal{L}_X=0$.
\end{proof}

Two singular foliations $(M,{\mathcal{F}})$ and $(N,\g)$ with transverse $\mathfrak{g}$-actions are \textit{$\mathfrak{g}$-homotopy equivalent} when there exist foliate $\mathfrak{g}$-equivariant maps $f:M\to N$ and $g:N\to M$ such that $g\circ f\simeq \mathrm{id}_M$ and $f\circ g\simeq \mathrm{id}_N$ by foliate $\mathfrak{g}$-equivariant homotopies.

\begin{corollary}\label{corollary: invariance of ebc under homotopies and deformation retracts}
If ${\mathcal{F}}$ and $\g$ are $\mathfrak{g}$-homotopy equivalent singular foliations, then $H_{\mathfrak{g}}({\mathcal{F}})\cong H_{\mathfrak{g}}(\g)$ as $\sym(\mathfrak{g}^*)^{\mathfrak{g}}$-algebras. In particular, if $h:(M,{\mathcal{F}})\to (M,{\mathcal{F}})$ is a foliate $\mathfrak{g}$-equivariant deformation retraction onto a $\mathfrak{g}$-invariant submanifold $J\subset M$, then $H_{\mathfrak{g}}({\mathcal{F}})\cong H_{\mathfrak{g}}({\mathcal{F}}{\vert}_J)$.
\end{corollary}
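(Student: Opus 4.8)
The plan is to deduce both assertions directly from Proposition \ref{proposition: Homotopy invariance of equivariant basic cohomology}, in exactly the way one proves homotopy invariance of ordinary (equivariant) cohomology. First I would record the elementary functoriality facts. If $f:(M,{\mathcal{F}})\to(N,\g)$ and $f':(N,\g)\to(P,\mathcal{H})$ are foliate $\mathfrak{g}$-equivariant maps, then so is $f'\circ f$, since a composition of $f$-related and $f'$-related foliate representatives is $(f'\circ f)$-related; and on equivariant basic cohomology one has $(f'\circ f)^*=f^*\circ(f')^*$ and $(\mathrm{id}_M)^*=\mathrm{id}$. Moreover, as observed just before Proposition \ref{proposition: Homotopy invariance of equivariant basic cohomology}, each $f^*$ is a degree-$0$ morphism of $\mathfrak{g}^\star$-algebras commuting with $d_{\mathfrak{g}}$, so the induced map on cohomology is a morphism of $\sym(\mathfrak{g}^*)^{\mathfrak{g}}$-algebras (it respects the products and fixes the polynomial factor $\sym(\mathfrak{g}^*)^{\mathfrak{g}}\otimes 1$).

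With these in hand, suppose $f:M\to N$ and $g:N\to M$ exhibit a $\mathfrak{g}$-homotopy equivalence, with $g\circ f\simeq\mathrm{id}_M$ and $f\circ g\simeq\mathrm{id}_N$ through foliate $\mathfrak{g}$-equivariant homotopies. Applying Proposition \ref{proposition: Homotopy invariance of equivariant basic cohomology} to these two homotopies gives $f^*\circ g^*=(g\circ f)^*=\mathrm{id}$ on $H_{\mathfrak{g}}({\mathcal{F}})$ and $g^*\circ f^*=(f\circ g)^*=\mathrm{id}$ on $H_{\mathfrak{g}}(\g)$. Hence $f^*\colon H_{\mathfrak{g}}(\g)\to H_{\mathfrak{g}}({\mathcal{F}})$ is an isomorphism of $\sym(\mathfrak{g}^*)^{\mathfrak{g}}$-algebras with inverse $g^*$.

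For the particular case, let $h\colon M\times[0,1]\to M$ be a foliate $\mathfrak{g}$-equivariant deformation retraction onto the $\mathfrak{g}$-invariant submanifold $J$ (which is saturated, this being part of the notion of foliate deformation retraction onto $J$), so $h_0=\mathrm{id}_M$, $h_1(M)\subset J$ and $h_1|_J=\mathrm{id}_J$. Since $J$ is $\mathfrak{g}$-invariant, each $\mu(X)$ is tangent to $J$, hence restricts to a transverse field on ${\mathcal{F}}|_J$, so ${\mathcal{F}}|_J$ carries a transverse $\mathfrak{g}$-action and the inclusion $\iota\colon(J,{\mathcal{F}}|_J)\to(M,{\mathcal{F}})$ is foliate and $\mathfrak{g}$-equivariant; the corestriction $r:=h_1\colon M\to J$ is foliate and $\mathfrak{g}$-equivariant because $h_1$ is, being a time slice of the foliate $\mathfrak{g}$-equivariant homotopy $h$. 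Then $r\circ\iota=\mathrm{id}_J$ and $\iota\circ r=h_1\simeq h_0=\mathrm{id}_M$ via the foliate $\mathfrak{g}$-equivariant homotopy $h$, so ${\mathcal{F}}$ and ${\mathcal{F}}|_J$ are $\mathfrak{g}$-homotopy equivalent and the first part gives $H_{\mathfrak{g}}({\mathcal{F}})\cong H_{\mathfrak{g}}({\mathcal{F}}|_J)$.

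The proof is essentially formal once Proposition \ref{proposition: Homotopy invariance of equivariant basic cohomology} is available; the only points needing care — and the closest thing to an obstacle — are the bookkeeping claims in the first paragraph (that $f^*$ genuinely descends to a $\sym(\mathfrak{g}^*)^{\mathfrak{g}}$-algebra morphism, which is immediate from its being a $\mathfrak{g}^\star$-algebra morphism commuting with $d_{\mathfrak{g}}$) and, in the particular case, checking that ${\mathcal{F}}|_J$ inherits a genuine transverse $\mathfrak{g}$-action and that $\iota$ and $r$ are $\mathfrak{g}$-equivariant, which is precisely where the $\mathfrak{g}$-invariance of $J$ enters. Everything else follows by contravariant functoriality.
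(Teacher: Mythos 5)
Your argument is correct and is exactly the standard formal deduction the paper has in mind: the corollary is stated without proof precisely because it follows from Proposition \ref{proposition: Homotopy invariance of equivariant basic cohomology} by the functoriality bookkeeping you spell out. No discrepancy with the paper's approach.
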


\section{Molino's structural theory for Riemannian foliations}\label{section: (Singular) Riemannian foliations}

A singular foliation ${\mathcal{F}}$ is a \textit{singular Riemannian foliation} when there exists a Riemannian metric $\mathrm{g}$ on $(M,{\mathcal{F}})$ which is \textit{adapted} to ${\mathcal{F}}$, that is, such that every geodesic which is perpendicular to a leaf of ${\mathcal{F}}$ remains perpendicular to all leaves it intersects. We call such geodesics \textit{transnormal}. If the adapted metric $\mathrm{g}$ can be chosen to be complete, then we say that ${\mathcal{F}}$ is \textit{complete}. A main class of examples of singular Riemannian foliations is that of isometric homogeneous foliations.

\begin{example}[{\cite[Section 6.1]{molino}}]\label{example: homogeneous srf}
Let $(M,\mathrm{g})$ be a Riemannian manifold on which a Lie group $H$ acts by isometries. Then $\mathrm{g}$ is an adapted metric for ${\mathcal{F}}_H$, which is thus a singular Riemannian foliation.
\end{example}

When ${\mathcal{F}}$ is regular, the adapted metric $\mathrm{g}$ is also called \textit{bundle-like}. It induces a \textit{transverse metric} for ${\mathcal{F}}$, that is, a symmetric, basic $(2,0)$-tensor field $\mathrm{g}^T$ on $M$ satisfying $\mathrm{g}^T(v,v)>0$ whenever $v$ is not tangent to ${\mathcal{F}}$. This means $\mathrm{g}^T$ projects to a Riemannian metric on the quotient $S$ of a submersion $\pi_i:U\to S$ locally defining ${\mathcal{F}}$ by its fibers. A transverse field $X\in\mathfrak{l}({\mathcal{F}})$ is a \textit{transverse Killing vector field} when $\mathcal{L}_{\tilde{X}}\mathrm{g}^T=0$, for a representative $\tilde{X}\in \mathfrak{L}({\mathcal{F}})$. It is clear that such a field projects to a Killing vector field on $S$.

There is a rich structural theory which describes the leaf closures of a complete (regular) Riemannian foliation ${\mathcal{F}}$, known as Molino theory (we refer to \cite{molino} and \cite{alex2} for details on this topic). One of its results states that the collection $\overline{{\mathcal{F}}}=\{\overline{L}\subset M\ {\vert}\ L\in{\mathcal{F}}\}$ is a singular Riemannian foliation, and its leaves, the closures of leaves in ${\mathcal{F}}$, are determined by the orbits of a locally constant sheaf of Lie algebras of germs of transverse Killing vector fields, the Molino sheaf $\mathscr{C}_{\mathcal{F}}$. This means that, for each $x\in M$, there is a small neighborhood $U\ni x$ where 
$$T_y\overline{{\mathcal{F}}}(y)=T_y{\mathcal{F}}(y)\oplus\{X_y\ {\vert}\ X\in \mathscr{C}_{{\mathcal{F}}}(U)\}$$
for each $y\in U$. When $M$ is connected, the typical stalk $\mathfrak{g}_{\mathcal{F}}$ of the Molino sheaf is the \textit{structural algebra of ${\mathcal{F}}$}, an important invariant of the foliation.

A complete, regular Riemannian foliation with globally constant Molino sheaf is called a \textit{Killing foliation}. The terminology, from \cite{mozgawa}, is motivated by the case of a homogeneous foliation ${\mathcal{F}}_H$, given by the locally free, isometric action of a Lie group $H$ on a complete, connected Riemannian manifold $M$. In this case $\mathscr{C}_{{\mathcal{F}}}$ is the sheaf of germs of transverse Killing fields induced by the fundamental Killing vector fields of the action of $\overline{H}<\mathrm{Iso}(M)$. The structural algebra of a Killing foliation is Abelian \cite[Section 5.5]{molino}, and to emphasize this we will denote it by $\mathfrak{a}_{\mathcal{F}}$ (also omitting ${\mathcal{F}}$ if it is clear from the context). Since for a Killing foliation the sheaf $\mathscr{C}_{\mathcal{F}}$ is globally constant, we have $\mathfrak{a}\cong\mathscr{C}_{\mathcal{F}}(M)$. That is, $\mathfrak{a}$ acts transversely on ${\mathcal{F}}$, and its action describes the leaf closures. We denote this succinctly by $\mathfrak{a}{\mathcal{F}}=\overline{{\mathcal{F}}}$.

\subsection{Structural theory of singular Riemannian foliations}\label{section: molino theory for srf}

Now suppose ${\mathcal{F}}$ is a complete singular Riemannian foliation, and let $\Sigma^k$ be the union of all $k$-dimensional leaves of ${\mathcal{F}}$. We call $\Sigma^k$ the \textit{$k$-locus} of ${\mathcal{F}}$.  One proves that each connected component $\Sigma^k_\alpha$ of $\Sigma^k$ is an embedded submanifold of $M$, called a \textit{stratum} (see \cite[Section 6.2]{molino}). This gives a stratification
$$M=\bigsqcup_{k,\alpha} \Sigma^k_\alpha$$
of $M$. We will often omit $k$ in $\Sigma^k_\alpha$ when it is not needed. The restriction ${\mathcal{F}}_\alpha:={\mathcal{F}}{\vert}_{\Sigma_\alpha}$, endowed with $\mathrm{g}_\alpha:=\mathrm{g}{\vert}_{\Sigma_\alpha}$, is a regular Riemannian foliation, for each $\alpha$. Moreover, if $L\subset \Sigma_\alpha$ then $\overline{L}\subset\Sigma_\alpha$ (\cite[Lemma 6.4]{molino}), and each $\Sigma_\alpha$ is transversally totally geodesic, meaning that a geodesic which is normal to the leaves and tangent to $\Sigma_\alpha$ remains within it at least for some time (\cite[Proposition 6.3]{molino}). We will say that a transverse field $X\in\mathfrak{l}({\mathcal{F}})$ is a \textit{transverse Killing vector field} if its restriction to each stratum $\Sigma_\alpha$ is a transverse Killing vector field for $({\mathcal{F}}_\alpha,\mathrm{g}_\alpha^T)$.

It will be useful to define some more terminology. The $\dim({\mathcal{F}})$-locus of ${\mathcal{F}}$ is an open and dense submanifold of $M$, also called the \textit{regular locus} of ${\mathcal{F}}$ and denoted by $\Sigma_{\mathrm{reg}}$. Each connected component of $\Sigma_{\mathrm{reg}}$ is a \textit{regular stratum} of ${\mathcal{F}}$. All other strata are called \textit{singular} and have codimension at least $2$ (in particular, when $M$ is connected $\Sigma_{\mathrm{reg}}$ is connected, and hence a stratum). The union $\Sigma_{\mathrm{sing}}$ of all singular strata is the \textit{singular locus} of ${\mathcal{F}}$. The most singular strata, that is, those containing the leaves of least dimension, are called \textit{minimal}. Their union is the \textit{minimal locus} of ${\mathcal{F}}$, also denoted by $\Sigma_{\mathrm{min}}$. The \textit{closed locus} of ${\mathcal{F}}$ is the union of all closed leaves, denoted by $\Sigma_{\mathrm{cl}}$.

There is an analog of Molino's structural theorem for singular Riemannian foliations. Its main part is the fact that the collection $\overline{{\mathcal{F}}}$ of the closures of leaves of a complete singular Riemannian foliation ${\mathcal{F}}$ is again a singular Riemannian foliation, known as \emph{Molino's conjecture}. It remained open for more than three decades, and was proven to hold recently in \cite[Main Theorem]{alex4}.

The generalization of the Molino sheaf to the singular setting has a caveat. Although a restriction ${\mathcal{F}}_\alpha$ is not necessarily complete, one can still apply Molino's theory to it. This can be done by the approach to Molino theory via pseudogroups \cite[Appendix D]{molino}, since the holonomy pseudogroup of ${\mathcal{F}}_\alpha$ is complete. In particular, for the case of ${\mathcal{F}}_{\mathrm{reg}}$, it is possible to prove that the corresponding Molino sheaf $\mathscr{C}_{\mathrm{reg}}$ extends continuously to a locally constant sheaf $\mathscr{C}_{{\mathcal{F}}}$ on $M$, called the \textit{Molino sheaf of ${\mathcal{F}}$} (see \cite[Lemma 6.5]{molino}). And this is the caveat: a section of $\mathscr{C}_{\mathrm{reg}}$ admits a continuous extension through the singular locus, but we do not know whether in general this extension is smooth. In other words, a section of $\mathscr{C}_{{\mathcal{F}}}$ is a ``continuous transverse Killing vector field'' $X$: it restricts to a transverse Killing vector field on each stratum, but it is represented by a local vector field $\tilde{X}$ on $M$ which is only continuous.

The question of whether in general $\mathscr{C}_{\mathrm{reg}}$ extends smoothly to a sheaf of Lie algebras of germs of genuine transverse Killing vector fields is known as the \emph{strong Molino conjecture} (since it implies the usual one). Molino states this conjecture in \cite[p. 215]{molino}. We will see that it holds for an important class of foliations in Theorem \ref{proposition: strong molino conjecture for orbit-like}. In general, we will say that ${\mathcal{F}}$ is a singular Riemannian foliation \textit{with smooth $\mathscr{C}_{{\mathcal{F}}}$} when it satisfies the strong Molino conjecture. Another relevant class of Riemannian foliations with this property is the following.

\begin{example}[Molino sheaf of a homogeneous singular Riemannian foliation]\label{exe: molino sheaf of homogeneous singular foliation}
Let $(M,\mathrm{g})$ be a complete Riemannian manifold with an isometric homogeneous foliation ${\mathcal{F}}$, given by the connected components of the orbits of $H<\mathrm{Iso}(M)$. The fundamental vector fields of the action of the closure $\overline{H}<\mathrm{Iso}(M)$ are $\mathrm{g}$-Killing and foliate. The Molino sheaf $\mathscr{C}_{{\mathcal{F}}}$ is the sheaf of Lie algebras of germs of transverse Killing vector fields induced by them. Hence $\overline{{\mathcal{F}}}$ is also homogeneous, given by the connected components of the orbits of $\overline{H}$.
\end{example}

The Molino sheaf $\mathscr{C}_\alpha$ of each ${\mathcal{F}}_\alpha$ is the quotient of the restriction of $\mathscr{C}_{{\mathcal{F}}}$ to $\Sigma_\alpha$ by the kernel of the restriction map on sections, that is, by the subsheaf consisting of those sections whose restriction to $\Sigma_\alpha$ vanish \cite[Proposition 6.8]{molino}. It is clear, hence, that the orbits of $\mathscr{C}_{{\mathcal{F}}}$ describe the closures of the leaves of ${\mathcal{F}}$. In analogy with the regular case, when $M$ is connected we say that the typical stalk $\mathfrak{g}_{\mathcal{F}}$ of $\mathscr{C}_{{\mathcal{F}}}$ is the \textit{structural Lie algebra} of ${\mathcal{F}}$. It coincides with the structural algebra of ${\mathcal{F}}_{\mathrm{reg}}$. From the relation between $\mathscr{C}_\alpha$ and $\mathscr{C}_{{\mathcal{F}}}$, it follows that the structural algebra $\mathfrak{g}_\alpha$ of ${\mathcal{F}}_\alpha$ is a quotient of $\mathfrak{g}_{\mathcal{F}}$.

\begin{example}[Structural algebra of a homogeneous singular Riemannian foliation]\label{exe: structural algebra of homogeneous singular foliation}
In the context of Example \ref{exe: molino sheaf of homogeneous singular foliation}, the structural algebra $\mathfrak{g}_{\mathcal{F}}$ is isomorphic to $\mathrm{Lie}(\overline{H})/\mathrm{Lie}({H})$.
\end{example}

\subsection{Homotheties}

Let $(M,{\mathcal{F}})$ be a complete singular Riemannian foliation and let $J\subset M$ be a saturated submanifold contained in some stratum $\Sigma^k$. Let $P\subset J$ be a connected open subset that admits a tubular neighborhood $\mathrm{Tub}_r(P)$, for some $r>0$. That is, $\mathrm{Tub}_r(P)$ is the diffeomorphic image of $\{v\in\nu P\ {\vert}\ \|v\|<r\}$ under the normal exponential map $\exp^\perp:\nu J\to M$. This is the case, for instance, when $P$ is relatively compact in $J$. We denote by $\rho_P:\mathrm{Tub}_r(P)\to P$ the orthogonal projection. By shrinking $r$ if necessary, we can assume further that for any $y\in \mathrm{Tub}_r(P)$ the leaf ${\mathcal{F}}(y)$ is transverse to $\rho_P^{-1}(\rho_P(y))$. Then each connected component of ${\mathcal{F}}(y)\cap \mathrm{Tub}_r(P)$ is a \textit{plaque} of ${\mathcal{F}}$, and the neighborhood $\mathrm{Tub}_r(P)$ is called a \textit{distinguished tubular neighborhood} for $P$.

Any closed leaf $L\in {\mathcal{F}}$ has a distinguished tubular neighborhood, even if it is not compact. In fact, it is shown in \cite[Proposition 16]{mendes} that for a fixed $x\in L$, if $r>0$ is such that $\exp_x(tv){\vert}_{t\in[0,1]}$ is the unique minimizing geodesic between $L$ and $\exp_x(v)$, for all $v\in\nu_x L$ with $\|v\|<r$, then $\mathrm{Tub}_r(L)$ is a tubular neighborhood for $L$. The same holds for any leaf closure $\overline{L}\in\overline{{\mathcal{F}}}$, since $\overline{{\mathcal{F}}}$ is again a singular Riemannian foliation.

For all $\lambda\in(0,\infty)$ such that $\mathrm{Tub}_{\lambda r}(P)$ is a distinguished tubular neighborhood of $P$ (let us call such a $\lambda$ \textit{admissible}), we define the \textit{homothetic transformation}
$$h_\lambda:\mathrm{Tub}_{r}(P)\ni\exp^\perp(v)\longmapsto \exp^\perp(\lambda v)\in \mathrm{Tub}_{\lambda r}(P)$$
around $P$. We also say that $P$ is the \textit{homothety center} of $h_\lambda$. Notice that each $h_\lambda$ is a diffeomorphism, and $h_\lambda\circ h_\mu=h_{\lambda\mu}$, when both sides of the equation make sense. We extend the definition to $\lambda=0$, getting $h_0=\rho_P$. One checks that in fact $h_\lambda$ converges to $\rho_P$ in the compact-open topology, as $\lambda\to 0$ (see \cite[Section 2.2]{inagaki}). These maps are of particular interest in the theory of singular Riemannian foliations because of Molino's homothetic lemma, which asserts that they are foliate with respect to the restrictions of ${\mathcal{F}}$ to $\mathrm{Tub}_{r}(P)$ and $\mathrm{Tub}_{\lambda r}(P)$ (see \cite[Lemma 6.2]{molino} for the case $\lambda>0$; for $\lambda=0$ this follows easily from the properties of ${\mathcal{F}}$). Special cases of interest are $P=\overline{L}$ and $P=\Sigma_\alpha\subset\Sigma_{\mathrm{min}}$.

Let $\mathrm{Tub}_r(P)$ be a distinguished tubular neighborhood and $U\subset\mathrm{Tub}_r(P)$ a saturated set which is preserved by $h_\lambda$, for $\lambda\in[0,1]$. Let us say $U$ is \textit{$P$-star-shaped}. Since homothetic transformations on $\mathrm{Tub}_r(P)$ restrict to the identity on $P$ and are defined for $\lambda\in[0,1]$, they provide a \textit{homothetic retraction}
$$h_P:U\times[0,1]\longrightarrow U$$
of $U$ onto $P$ by $h_P(x,t)=h_{1-t}(x)$. These maps will be very useful to us, since they are foliate strong deformation retractions.

\subsection{Orbit-like foliations}\label{section: orbit-like foliations}

In this section we prove Theorem \ref{theoremA} (as Theorem \ref{proposition: strong molino conjecture for orbit-like} below). We start by introducing the class of foliations for which it is stated. Let ${\mathcal{F}}$ be a complete singular Riemannian foliation and consider a distinguished tubular neighborhood $\mathrm{Tub}_r(P)$ for $P\subset L\in{\mathcal{F}}$. A fiber $S_x=\rho_P^{-1}(x)$ is a \textit{slice} for ${\mathcal{F}}$ at $x\in P$, and ${\mathcal{F}}{\vert}_{S_x}$ is called the \textit{slice foliation} at $x$. Its pullback by $\exp^\perp_x$ is a singular Riemannian foliation of $B_r(0)\subset T_xS_x=\nu_xL$ which, by Molino's homothetic lemma, can be extended via homotheties to a singular Riemannian foliation ${\mathcal{F}}_x$ on the whole of $\nu_xL$. This extension is the so-called \textit{infinitesimal foliation at $x$}.

Some relevant subclasses of singular Riemannian foliations are defined in terms of infinitesimal foliations. For instance, ${\mathcal{F}}$ is called
\begin{enumerate}
\item \textit{infinitesimally closed} when ${\mathcal{F}}_x$ is closed for each $x\in M$,
\item \textit{infinitesimally homogeneous} when ${\mathcal{F}}_x$ is homogeneous for each $x\in M$, and
\item \textit{orbit-like} when it is infinitesimally closed and infinitesimally homogeneous.
\end{enumerate} 
By Molino's theory, the closure $\overline{{\mathcal{F}}}$ of a complete regular Riemannian foliation ${\mathcal{F}}$ is orbit-like, which led him to study this class of foliations (see, e.g., \cite{molino2}). Other examples of orbit-like foliations are given by the so-called \textit{holonomy foliations}, whose leaves are the holonomy tubes with respect to a metric connection on an Euclidean vector bundle over a Riemannian manifold $L$ (see, e.g., \cite[Example 2.7]{alex4}). Orbit-like and infinitesimally closed foliations were also prominent in the proof of Molino's conjecture in \cite{alex4} (also \cite{alex9}). In fact, let us now see that the machinery developed in \cite{alex9} adapt directly to conclude that the strong Molino conjecture holds for orbit-like foliations.

Let $\Sigma$ be the stratum containing $x$ and let $V=\mathrm{Tub}_r(P)\subset U$ be a distinguished tubular neighborhood around an open homothety center $P\subset\Sigma$ containing $x$. Take a slice $S\subset P$ through $x$ for ${\mathcal{F}}{\vert}_P$. We are interested in the so called \textit{local reduction at $x$}
$$N=\exp(\{v\in (\nu P){\vert}_S\ {\vert}\ \|v\|<r\}),$$
which can be thought as a generalized slice (see \cite[Section 2.8]{alex9}). As in the case of slices, by shrinking $V$ if necessary we can assume that there is a submersion $p:V\to N$, whose fibers are contained in the leaves of ${\mathcal{F}}$. The connected components of the intersections of the leaves of ${\mathcal{F}}$ with $N$ define a singular foliation ${\mathcal{F}}_N$, which is in general more sensible to the dynamics of ${\mathcal{F}}$ then the infinitesimal foliation ${\mathcal{F}}_x$. We now cite some results of \cite{alex9} concerning $(N,{\mathcal{F}}_N)$ that will be useful.

First, in \cite[Proposition 2.20]{alex9} the authors construct a Riemannian metric $\mathrm{g}_{N}$ on $N$ that is adapted to ${\mathcal{F}}_{N}$, hence turning it into a singular Riemannian foliation. Furthermore, $\mathrm{g}_{N}$ preserves the transverse metric of ${\mathcal{F}}$, meaning that the distance between leaves of ${\mathcal{F}}_{N}$ with respect to $\mathrm{g}_{N}$ coincides with the distance between the corresponding plaques of ${\mathcal{F}}$ defining them. We will also use \cite[Corollary 2.25]{alex9}, which establishes that, when ${\mathcal{F}}$ is orbit-like, the foliation ${\mathcal{F}}_N$ is a homogeneous singular foliation, given by the orbits of a compact Lie group (notice that the assumption on \cite[p. 13]{alex9} that $(M, \mathcal{F})$ is \textit{locally closed}, under which \cite[Corollary 2.25]{alex9} falls, is automatically satisfied when $\mathcal{F}$ is orbit-like).

The idea of the proof of Theorem \ref{theoremA} is to smoothly lift the local isometric flow $\varphi$ on $M/\f$ induced by a section of $\mathscr{C}_{\mathcal{F}}$. For that, as noted in \cite{alex9}, one can apply G.~Schwarz's fundamental result on the isotopy lifting conjecture: a smooth flow on the orbit space $M/G$ of a proper action $G\times M\to M$ is the projection of a smooth $G$-equivariant flow on $M$ \cite[Corollary 2.4]{schwarz}. Finally, to guarantee that $\varphi$ is smooth in order to apply Schwarz's theorem, we will use \cite[Theorem 3.1]{alex9}: if $(M,\f)$ is a singular Riemannian foliation whose leaves are spanned by a proper smooth action $G\times M\to M$ and $\varphi$ is a continuous local flow of isometries on the orbit space, then $\varphi$ is smooth.

\begin{theorem}[Strong Molino conjecture for orbit-like foliations]\label{proposition: strong molino conjecture for orbit-like}
If ${\mathcal{F}}$ is orbit-like, then $\mathscr{C}_{\mathcal{F}}$ is smooth.
\end{theorem}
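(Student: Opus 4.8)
The plan is to reduce the question of smoothness of $\mathscr{C}_{\mathcal{F}}$ to a local statement: since $\mathscr{C}_{\mathrm{reg}}$ extends to the locally constant sheaf $\mathscr{C}_{\mathcal{F}}$ on $M$, and smoothness is a local property, it suffices to show that around any point $x$ of the singular locus, every continuous section of $\mathscr{C}_{\mathcal{F}}$ is represented by a genuine smooth transverse Killing vector field. So fix $x\in\Sigma_{\mathrm{sing}}$ and work inside a distinguished tubular neighborhood $V=\mathrm{Tub}_r(P)\subset U$ around an open homothety center $P\subset\Sigma$ through $x$, together with the local reduction $N=\exp(\{v\in(\nu P)|_S\ |\ \|v\|<r\})$ and the submersion $p:V\to N$ with leaf-contained fibers, as set up in the paragraphs preceding the statement.

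The heart of the argument is to transport the problem to $(N,{\mathcal{F}}_N)$. A section of $\mathscr{C}_{\mathcal{F}}$ over $V$ is a continuous transverse Killing field, and hence, via its flow, induces a continuous local flow $\varphi$ of isometries on the leaf space $V/{\mathcal{F}}$ fixing the image of $P$; because $p$ has leaf-contained fibers and $\mathrm{g}_N$ preserves the transverse metric of ${\mathcal{F}}$ (by \cite[Proposition 2.20]{alex9}), this descends to a continuous local flow of isometries $\overline\varphi$ on the orbit space $N/{\mathcal{F}}_N$. Since ${\mathcal{F}}$ is orbit-like, \cite[Corollary 2.25]{alex9} gives that ${\mathcal{F}}_N$ is the orbit foliation of a proper (indeed compact) Lie group action $G\times N\to N$. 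First I would apply \cite[Theorem 3.1]{alex9} to conclude that $\overline\varphi$ is in fact a \emph{smooth} local flow of isometries on $N/{\mathcal{F}}_N = N/G$. Then I would invoke Schwarz's isotopy lifting result \cite[Corollary 2.4]{schwarz}: the smooth flow $\overline\varphi$ on the orbit space $N/G$ lifts to a smooth $G$-equivariant flow on $N$, whose infinitesimal generator is a smooth $G$-invariant, hence foliate, vector field on $N$ projecting to $\overline\varphi$. Pulling this back through $p$ (after extending it to be $\mathrm{g}$-Killing and foliate on $V$, using the tubular structure and homotheties much as the infinitesimal foliation is built) produces a smooth foliate representative of the original section of $\mathscr{C}_{\mathcal{F}}$, which is what we need. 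Finally I would check compatibility on overlaps, so that these local smooth representatives glue to show that the locally constant sheaf $\mathscr{C}_{\mathcal{F}}$ is a sheaf of germs of genuine (smooth) transverse Killing fields; the ``In particular'' statement of Theorem \ref{theoremA} is then immediate, since on a simply connected $M$ a locally constant sheaf is constant, so ${\mathcal{F}}$ is Killing.

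The main obstacle I expect is the bookkeeping in passing between the three models $V$, $N$, and $\nu_x L$: one must be careful that the continuous transverse field on $V$ genuinely descends along $p:V\to N$ to a well-defined continuous object on $N/{\mathcal{F}}_N$ (this uses that $p$-fibers lie in leaves of ${\mathcal{F}}$ and that $\mathrm{g}_N$ preserves the transverse metric), and conversely that the smooth lift produced on $N$ can be propagated back over all of $V$ via the homothetic transformations $h_\lambda$ to yield a smooth field there — i.e. that smoothness survives the extension from a slice-type neighborhood to the full distinguished tubular neighborhood. A secondary point requiring care is verifying the hypotheses of \cite[Theorem 3.1]{alex9} and \cite[Corollary 2.4]{schwarz} exactly as stated, namely that ${\mathcal{F}}_N$ is spanned by a \emph{proper} smooth action (guaranteed here by orbit-likeness via \cite[Corollary 2.25]{alex9}, together with the remark that the local closedness assumption of \cite[p. 13]{alex9} is automatic for orbit-like ${\mathcal{F}}$) and that the flow in question is genuinely by isometries of the reduced metric. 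Once these identifications are in place, the result follows by citing the three external theorems in sequence, with no further substantial computation.
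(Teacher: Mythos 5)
Your proposal follows the paper's proof almost step for step: pass to the local reduction $N$ with its adapted metric $\mathrm{g}_N$, use orbit-likeness to realize ${\mathcal{F}}_N$ as the orbit foliation of a compact group action, apply \cite[Theorem 3.1]{alex9} to upgrade the induced continuous isometric flow on $N/{\mathcal{F}}_N$ to a smooth one, lift via Schwarz's theorem, and transport back to $V$ through the submersion $p$. The only deviation is in the final lift from $N$ to $V$, where you invoke homotheties and an extension to a $\mathrm{g}$-Killing field; the paper's argument is simpler there — since $p$ is a submersion with leaf-contained fibers, \emph{any} smooth $p$-related vector field on $V$ already represents the section — and your closing concern about ``compatibility on overlaps'' is unnecessary because smoothness of the sheaf is a stalk-local property.
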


\begin{proof}
Let $X\in \mathscr{C}_{\mathcal{F}}(U)$ be a local section and let $x\in U$. We must prove that $X$ admits a smooth representative $\tilde{X}$ in some neighborhood $V\ni x$, which we will take to be a distinguished tubular neighborhood defining a local reduction $N$ at $x$. Let $\mathrm{g}_{N}$ be the aforementioned ${\mathcal{F}}_N$-adapted metric on $N$ \cite[Proposition 2.20]{alex9}. Since $\mathrm{g}_{N}$ preserves the transverse metric of ${\mathcal{F}}$, it follows that the restriction $X{\vert}_N$ defines a continuous isometric flow $\varphi$ on $N/{\mathcal{F}}_{N}$, as it restricts to a transverse Killing vector field on the dense open subset $\Sigma_{\mathrm{reg}}\cap N$.

Now, as we saw above, ${\mathcal{F}}_N$ is homogeneous, given by the orbits of a compact Lie group \cite[Corollary 2.25]{alex9}. Hence, by \cite[Theorem 3.1]{alex9}, the local flow $\varphi$ is smooth. By Schwarz's theorem, it hence follows that $\varphi$ lifts to an equivariant smooth flow $\tilde{\varphi}$ on $N$. The infinitesimal generator $\tilde{X}_{N}$ of $\tilde{\varphi}$ is then a smooth representative for $X{\vert}_N$. Having that, we can now just choose any smooth field $\tilde{X}$ on $V$ that is $p$-related to $\tilde{X}_{N}$, where $p:V\to N$ is the submersion of $V$ onto $N$. Since $X$ is ${\mathcal{F}}$-invariant and the fibers of $p$ are contained in the leaves of ${\mathcal{F}}$, it follows that $\tilde{X}$ is a local representative of $X$.
\end{proof}

\section{Good covers and the dimension of basic cohomology}\label{section: good covers}

We say that a locally finite cover $\mathcal{U}=\{U_i\}$ of $M$ by saturated open sets is a \textit{good cover} for $(M,{\mathcal{F}})$ when any non-empty intersection $U$ of elements of the cover admits a foliate deformation retraction onto the closure of some leaf $L_U\subset U$. We will prove here that singular Riemannian foliations of compact manifolds admit finite good covers, whose retractions are furthermore homothetic (we will actually obtain this more generally for transversely compact foliations, see Section \ref{subsection: existence good covers}). This result were previously stated in \cite{wolak} for a singular Riemannian foliation of a compact manifold $M$, but we will provide an alternative proof here, because the original one relies on \cite[Lemma 2]{wolak}, which does not hold as stated. It claims that there exists $r>0$ which works as the radius of a tubular neighborhood $\mathrm{Tub}_r(\overline{L})$ for any $L\in {\mathcal{F}}$. This can fail because regular leaves near a singular leaf $L$ must lie inside distance tubes $\partial \mathrm{Tub}_r(L)$, and so their focal radii tend to zero as they get closer to $L$. The claim also fails for a regular foliation if it has at least one leaf $L$ with non-trivial holonomy: the recurrence of generic nearby leaves makes their normal injectivity radii also tend to zero as they get closer to $L$.

\subsection{Holonomy types and transverse convexity}

Guided by the discussion above, we will define a finer stratification of $M$ that discriminate leaves based not only on their dimension but also on their holonomy. Let ${\mathcal{F}}$ be a complete closed singular Riemannian foliation of $M$. The restriction ${\mathcal{F}}_\alpha$ to each stratum $\Sigma_\alpha$ is a closed regular Riemannian foliation. Each $\Sigma_\alpha/{\mathcal{F}}_\alpha$ is then a Riemannian orbifold, whose local groups are the holonomy groups of the leaves, and the projection $\pi_\alpha:\Sigma_\alpha\to \Sigma_\alpha/{\mathcal{F}}_\alpha$ is an orbifold map (see \cite[Proposition 3.7]{molino}; it is stated for ${\mathcal{F}}$ with compact leaves, but notice the proof there works when ${\mathcal{F}}$ is only closed and complete). Let $\Sigma_\alpha(\Gamma)$ be the inverse image under $\pi_\alpha$ of the locus of points in $\Sigma_\alpha/{\mathcal{F}}_\alpha$ with local group $\Gamma$ (defined up to action isomorphism). It is a totally geodesic manifold (see \cite[Lemma 4.5.3 and Theorem 4.5.4]{choi}), thus $\Sigma_\alpha(\Gamma)$ is a transversely totally geodesic submanifold. We say that a leaf $L\subset\Sigma_\alpha(\Gamma)$ has \textit{holonomy type $\Gamma$}. This furnishes the stratification
$$M=\bigsqcup\Sigma^{k}_{\alpha}(\Gamma)$$
by holonomy type. For simplicity, we will denote a generic holonomy type stratum by $\mathcal{S}$, and the one containing $L\in{\mathcal{F}}$ by $\mathcal{S}_L$.

We will need a type of uniformly normal saturated neighborhood. As already noted, one cannot expect to find $r>0$ that works as the radius of a tubular neighborhood $\mathrm{Tub}_r(\overline{L})$ for any leaf in some saturated neighborhood. Instead, we will use the following weaker notion: for $\mathcal{S}$ a holonomy type stratum of ${\mathcal{F}}$, we say that a saturated open set $W\subset M$ is \textit{uniformly $\mathcal{S}$-normal} if there exists $r>0$ such that $W\subset \mathrm{Tub}_r(L)$ for every leaf $L\subset W\cap\mathcal{S}$.

\begin{lemma}\label{lemma: uniformly normal}
Let ${\mathcal{F}}$ be a closed, complete singular Riemannian foliation. Given a leaf $L\in{\mathcal{F}}$ and any saturated neighborhood $U\supset L$, there exists a uniformly $\mathcal{S}_L$-normal open set $W\subset U$ containing $L$.
\end{lemma}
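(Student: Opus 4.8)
The plan is to set up a distinguished tubular neighborhood of $L$, reduce the lemma to a uniform lower bound on the tube radii of the leaves of $\mathcal{S}_L$ near $L$, and extract that bound from the fact that ${\mathcal{F}}|_{\mathcal{S}_L}$ has trivial holonomy. First, since ${\mathcal{F}}$ is closed, $L$ is a closed submanifold, and since ${\mathcal{F}}$ is complete, $L$ admits a distinguished tubular neighborhood $\mathrm{Tub}_\rho(L)$ for some $\rho>0$, even when $L$ is noncompact (by \cite[Proposition 16]{mendes}, whose criterion also shows that the tube radius, and with it the transverse geometry, is uniform along $L$). Recall also that $d(\cdot,L)$ is constant along the leaves of ${\mathcal{F}}$, because the leaf-space projection of a closed singular Riemannian foliation is a submetry; hence every metric tube $\{y\mid d(y,L)<\epsilon\}$ is saturated. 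The lemma then reduces to the claim that there are $\eta\in(0,\rho)$ and $r_0>0$ such that every leaf $L'\subseteq\{d(\cdot,L)<\eta\}\cap\mathcal{S}_L$ has a distinguished tubular neighborhood $\mathrm{Tub}_{r_0}(L')=\{y\mid d(y,L')<r_0\}$ in $M$. Granting it, take $\epsilon=\min\{\eta,r_0/3\}$ and $W=\{y\mid d(y,L)<\epsilon\}\cap U$: this is a saturated open set with $L\subseteq W\subseteq U$, and any leaf $L'\subseteq W\cap\mathcal{S}_L$ satisfies $d(L,L')<r_0/3$, so $d(y,L')\le d(y,L)+d(L,L')<2r_0/3<r_0$ for every $y\in W$, whence $W\subseteq\mathrm{Tub}_{r_0}(L')$ and $W$ is uniformly $\mathcal{S}_L$-normal.

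For the claim, the key point is that ${\mathcal{F}}|_{\mathcal{S}_L}$ has trivial holonomy. Write $\Sigma_\alpha$ for the stratum containing $L$ and $\Gamma$ for its holonomy type, so $\mathcal{S}_L=\Sigma_\alpha(\Gamma)$. The leaf space of the closed regular Riemannian foliation ${\mathcal{F}}|_{\mathcal{S}_L}$ is the isotropy stratum $(\Sigma_\alpha/{\mathcal{F}}_\alpha)_{(\Gamma)}$, which is a smooth manifold, isotropy strata of Riemannian orbifolds being manifolds, as recalled above (\cite[Lemma 4.5.3 and Theorem 4.5.4]{choi}); since the local groups of the leaf-space orbifold of a closed regular Riemannian foliation are the holonomy groups of its leaves, every leaf of ${\mathcal{F}}|_{\mathcal{S}_L}$ — $L$ in particular — has trivial holonomy. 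Consequently $L$ has, inside $\mathcal{S}_L$, a saturated product neighborhood $\Omega\cong L\times D$ on which ${\mathcal{F}}$ restricts to the product foliation by the slices $L\times\{v\}$, all diffeomorphic to $L$ and carrying no holonomy, the product structure being uniform along $L$ thanks to the distinguished tube $\mathrm{Tub}_\rho(L)$.

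It remains to thicken these neighborhoods from $\mathcal{S}_L$ to $M$. Since $\mathcal{S}_L$ is transversely totally geodesic, in the coordinates on $\mathrm{Tub}_\rho(L)$ provided by $\exp^\perp$ the submanifold $\mathcal{S}_L$ appears as a sub-disc-bundle of $\nu(L\hookrightarrow M)$ and the nearby leaves $L'=L\times\{v\}\subseteq\Omega$ as parallel sections; correspondingly $\nu(L'\hookrightarrow M)=\nu(L'\hookrightarrow\mathcal{S}_L)\oplus\nu(\mathcal{S}_L\hookrightarrow M)|_{L'}$ and the transnormal geodesics split along this decomposition. Reading off $\exp^\perp$ in these coordinates, one obtains a radius $r_0>0$, independent of $v$ for $v$ small, on which $\exp^\perp_{L'}$ is a diffeomorphism with image a distinguished tubular neighborhood of $L'$ containing $\{d(\cdot,L)<r_0\}$; choosing $\eta$ so small that $\{d(\cdot,L)<\eta\}\cap\mathcal{S}_L\subseteq\Omega$ then finishes the claim. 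The hard part is exactly this uniformity: a lower bound on the distinguished-tube radius of the leaves $L'$ holding simultaneously over the transverse parameter $v$ and \emph{along} the possibly noncompact leaves $L'$. It is forced to follow from the single distinguished tube of the fixed closed leaf $L$, which by the transitive nature of ${\mathcal{F}}$ already carries control uniform along $L$, transported to the nearby leaves via the holonomy-free product structure of $\Omega$.
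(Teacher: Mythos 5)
Your overall strategy (exploit the trivial holonomy of ${\mathcal{F}}{\vert}_{\mathcal{S}_L}$, build a saturated product neighborhood, and try to propagate a uniform tube radius from $L$ to nearby leaves of $\mathcal{S}_L$) is in the right spirit, and the first-paragraph reduction is fine once one knows that $d(\cdot,L')$ is constant along leaves. But the last paragraph, where you yourself say ``the hard part is exactly this uniformity,'' is where the argument breaks down rather than being completed.

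The claim that ``$\nu(L'\hookrightarrow M)=\nu(L'\hookrightarrow\mathcal{S}_L)\oplus\nu(\mathcal{S}_L\hookrightarrow M)|_{L'}$ and the transnormal geodesics split along this decomposition'' is not justified and is not true in the strength you need. The stratum $\mathcal{S}_L$ is only \emph{transversely} totally geodesic: a transnormal geodesic which starts tangent to $\mathcal{S}_L$ stays in $\mathcal{S}_L$ for some time, but there is no reason a transnormal geodesic with a general initial vector $v_1+v_2$ (with $v_1$ tangent and $v_2$ normal to $\mathcal{S}_L$) should decompose as a ``product'' of the two geodesics; that would require a genuine local metric splitting, not just transverse geodesicity. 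Without such a splitting, ``reading off $\exp^\perp$ in these coordinates'' does not produce the desired uniform $r_0$: the focal radius of $L'$ in $M$ depends on the ambient second fundamental form and curvature along $L'$, which a product chart \emph{inside} $\mathcal{S}_L$ does not control. The final sentence, ``it is forced to follow\ldots transported via the holonomy-free product structure of $\Omega$,'' merely restates the goal.

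The paper sidesteps exactly this issue. Instead of working with $\exp^\perp$ in the ambient manifold, it passes to a \emph{local reduction} $N$ and uses the metric $\mathrm{g}_N$ from \cite[Proposition 2.20]{alex9}, whose defining property is that it realizes the transverse metric of ${\mathcal{F}}$. With that metric, a geodesic ball $B_r(x)\subset N$ at $x\in N\cap\mathcal{S}_L$ automatically saturates to a distinguished tubular neighborhood $\mathrm{Tub}_r({\mathcal{F}}(x))$ — the correspondence between injectivity/minimality for $x$ in $N$ and for the leaf ${\mathcal{F}}(x)$ in $M$ is precisely what $\mathrm{g}_N$ encodes (combined with the single-point criterion of \cite[Proposition 16]{mendes}). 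Trivial holonomy of ${\mathcal{F}}{\vert}_{\mathcal{S}_L}$ enters to guarantee $N\cap\mathcal{S}_L$ is a trivializing slice so that $L\cap N$ is a singleton, and then one only needs a uniformly normal neighborhood \emph{of a point} in $(N,\mathrm{g}_N)$, which is standard. You would need to supply an analogue of that uniformity directly in $M$, and the proposal as written does not.
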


\begin{proof}
Since ${\mathcal{F}}{\vert}_{\mathcal{S}_L}$ is a simple Riemannian foliation, hence a locally trivial fiber bundle, we can find a local reduction $N\subset U$ for ${\mathcal{F}}$ such that $N\cap\mathcal{S}_L$ is a trivializing slice for ${\mathcal{F}}{\vert}_{\mathcal{S}_L}$. Recall there is a metric $\mathrm{g}_N$ on $N$ that preserves the transverse metric of ${\mathcal{F}}$ (see \cite[Proposition 2.20]{alex9}). Then, if $B_r(x)\subset N$ is a geodesic ball at $x\in N\cap\mathcal{S}_L$, we have that $\mathrm{Tub}_r({\mathcal{F}}(x))$ is a tubular neighborhood for ${\mathcal{F}}(x)$. Therefore it is sufficient to take $W$ as the saturation of a uniformly normal neighborhood of (the singleton) $L\cap N$ in $N$.
\end{proof}

We say that $W\supset L$, given by Lemma \ref{lemma: uniformly normal}, is a \textit{uniformly $\mathcal{S}_L$-normal neighborhood} of $L$. We will also need tubular neighborhoods with a type of transverse convexity. A saturated open set $U\subset M$ is \textit{strongly transversely convex} when for any two leaf closures in $U$ there is a transversely unique minimal geodesic segment $\gamma$ that lies completely in $U$ connecting them. By \textit{transversely unique} we mean that if $\sigma$ is another minimal geodesic between those leaf closures, with $\gamma(0)=\sigma(0)$ and $\gamma(1)=\sigma(1)$, then $\sigma=\gamma$.

\begin{lemma}\label{lemma: strongly transversely geodesic}
Let ${\mathcal{F}}$ be a closed, complete singular Riemannian foliation. For each $L\in{\mathcal{F}}$ there exists $c>0$ such that $\mathrm{Tub}_r(L)$ is strongly transversely geodesic, for any $0<r<c$.
\end{lemma}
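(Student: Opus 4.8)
The plan is to reduce the statement, by passing to a local reduction, to the ordinary Riemannian convexity of small geodesic balls, and then to transport strong transverse convexity back up to $M$ using that the adapted metric on a local reduction preserves the transverse metric of ${\mathcal{F}}$. Concretely, fix $x\in L$ and, exactly as in the proof of Lemma~\ref{lemma: uniformly normal}, choose a local reduction $N\ni x$ for ${\mathcal{F}}$ together with the ${\mathcal{F}}_N$-adapted metric $\mathrm{g}_N$ of \cite[Proposition 2.20]{alex9}, which preserves the transverse metric of ${\mathcal{F}}$. Since $N$ meets the stratum containing $L$ in a slice for the restriction of ${\mathcal{F}}$ to that stratum at $x$, the leaf of ${\mathcal{F}}_N$ through $x$ is the single point $\{x\}$, and $({\mathcal{F}}_N,\mathrm{g}_N)$ is again a closed singular Riemannian foliation. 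As the distance $d_N(\cdot,x)$ to the closed leaf $\{x\}$ is a basic function (a standard consequence of the adapted metric, cf.\ \cite{molino}), every leaf closure of ${\mathcal{F}}_N$ is contained in a single geodesic sphere about $x$; in particular any leaf closure of ${\mathcal{F}}_N$ meeting a ball $B_r(x)\subset N$ is entirely contained in it.

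Next I would invoke the classical existence of a convexity radius (Whitehead): there is $c>0$ such that $B_r(x)\subset N$ is strongly geodesically convex for every $0<r<c$, i.e.\ any two of its points are joined by a unique minimal $\mathrm{g}_N$-geodesic, which moreover stays inside $B_r(x)$. Given two leaf closures $A_1,A_2$ of ${\mathcal{F}}_N$ contained in $B_r(x)$, a minimal $\mathrm{g}_N$-geodesic $\sigma$ realizing $d_N(A_1,A_2)$ has both its endpoints in $B_r(x)$ by the previous paragraph, hence is the unique minimal geodesic of $N$ between those endpoints and lies inside $B_r(x)$, while the first-variation formula forces $\sigma$ to meet $A_1$ and $A_2$ orthogonally, so $\sigma$ is transnormal; any competing minimal geodesic between $A_1$ and $A_2$ with the same endpoints must then equal $\sigma$. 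Thus $B_r(x)$ is strongly transversely convex for $({\mathcal{F}}_N,\mathrm{g}_N)$. It remains to transport this down to $M$: since $\mathrm{g}_N$ preserves the transverse metric of ${\mathcal{F}}$, for small $r$ the leaf space of ${\mathcal{F}}$ on $\mathrm{Tub}_r(L)$, with its transverse quotient metric, is isometric to that of ${\mathcal{F}}_N$ on $B_r(x)$, compatibly with the correspondence of minimal transnormal geodesics between leaf closures (whose lengths realize the respective transverse distances); as strong transverse convexity is essentially a property of this structure — the ambient geodesic segment being the horizontal lift of its leaf-space projection — it transfers from $B_r(x)$ to $\mathrm{Tub}_r(L)$. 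Combining this with Lemma~\ref{lemma: uniformly normal} to keep the sizes of the reductions uniform along $L$ (and, for noncompact $L$, a completeness argument), and shrinking $c$ so that $\mathrm{Tub}_r(L)$ is moreover a distinguished tubular neighborhood contained in the chosen reductions, produces the desired uniform $c$.

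I expect the transfer step to be where the real work lies. One must make precise that a minimal transnormal geodesic $\gamma$ of $M$ joining two leaf closures inside a small tube of $L$ corresponds, through the transverse-metric-preserving identification, to a minimal $\mathrm{g}_N$-geodesic $\sigma$ of $N$ between the corresponding leaf closures of ${\mathcal{F}}_N$ — so that both transverse uniqueness and the confinement of $\sigma$ in $B_r(x)$ versus of $\gamma$ in $\mathrm{Tub}_r(L)$ pass back and forth — and this requires some care with the holonomy of $L$, which is precisely why one works inside the holonomy-type stratum $\mathcal{S}_L$ and appeals to Lemma~\ref{lemma: uniformly normal}. The Whitehead convexity input and the first-variation computation are entirely standard; singular Riemannian foliations enter only through the transverse-metric-preserving local reduction of \cite{alex9} and the fact that the distance to a closed leaf is basic.
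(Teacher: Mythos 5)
Your approach is genuinely different from the paper's: the paper disposes of this lemma in one line, by citing \cite[Lemma A.3]{toben} (which treats the homogeneous case) and observing that Töben's argument makes no use of homogeneity, hence applies verbatim. You instead try to build the convexity from scratch by passing to a local reduction $(N,\mathrm{g}_N)$, invoking Whitehead convexity for $\mathrm{g}_N$, and then transporting the resulting transverse convexity back to the tube in $M$.

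The first half of your argument (working inside $N$) is sound: the $\mathcal{F}_N$-leaf through $x$ is the singleton $\{x\}$, the distance $d_N(\cdot,x)$ is basic, a Whitehead radius gives strongly convex balls $B_r(x)\subset N$, minimal $\mathrm{g}_N$-geodesics between two $\mathcal{F}_N$-leaf closures in such a ball stay inside the ball, first variation together with the adapted-metric property makes them transnormal, and ordinary geodesic uniqueness in $B_r(x)$ gives transverse uniqueness for $\mathcal{F}_N$ there.

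The gap is the transfer step, and you correctly suspect it. Two distinct issues arise. First, the property that $\mathrm{g}_N$ preserves the transverse metric of $\mathcal{F}$ (\cite[Proposition 2.20]{alex9}) is stated at the level of \emph{plaques}: it equates $\mathrm{g}_N$-distances between $\mathcal{F}_N$-leaves with $\mathrm{g}$-distances between the plaques of $\mathcal{F}$ in $V=\mathrm{Tub}_r(P)$ that determine them. A single leaf closure $\overline{L'}\subset\mathrm{Tub}_r(L)$ may meet $N$ in several $\mathcal{F}_N$-leaves, since $N$ is not saturated for $\mathcal{F}$, so the leaf space of $\mathcal{F}|_{\mathrm{Tub}_r(L)}$ is not the same as the $\mathcal{F}_N$-leaf space of $B_r(x)$ but rather a quotient of it (by the holonomy of $L$, roughly speaking). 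Both the existence of a minimal $\mathrm{g}$-geodesic between $\overline{L_1}$ and $\overline{L_2}$ staying inside $\mathrm{Tub}_r(L)$ and its transverse uniqueness are statements about the quotient, and strong convexity of the cover does not automatically descend. Your sentence ``strong transverse convexity is essentially a property of this structure'' is exactly the assertion that needs a proof, not a remark. Second, the local reduction $N$ and the ball $B_r(x)$ are attached to a single point $x\in L$ and to the tube around an open homothety center $P\subset\Sigma$, whereas the statement is about $\mathrm{Tub}_r(L)$, the tube around the whole (possibly noncompact) leaf; the passage from one to the other, which you relegate to ``a completeness argument,'' is where the holonomy of $L$ and the uniformity of $c$ interact and must be controlled. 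Until those two points are addressed explicitly, the proposal does not yet constitute a proof; as it stands it establishes convexity for the local model $({\mathcal{F}}_N,\mathrm{g}_N)$ near $x$, not the stated convexity of $\mathrm{Tub}_r(L)$ for $({\mathcal{F}},\mathrm{g})$.
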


\begin{proof}
The proof of \cite[Lemma A.3]{toben} for the homogeneous case adapts directly to the above setting since it does not use the homogeneity.
\end{proof}

\subsection{Existence of good covers}\label{subsection: existence good covers}

We say that a singular foliation ${\mathcal{F}}$  is \textit{transversely compact} when $M/\f$ is compact. Of course, if $M$ is compact then any singular foliation of $M$ is transversely compact. Recall that, if $\f$ is a complete singular Riemannian foliation, then $\overline{{\mathcal{F}}}$ is again a singular Riemannian foliation (\cite[Main Theorem]{alex4}). In this case ${\mathcal{F}}$ is transversely compact if, and only if, $\overline{{\mathcal{F}}}$ is transversely compact, because ${\mathcal{F}}$ and $\overline{{\mathcal{F}}}$ have the same saturated open sets in $M$. In fact, it is clear that $\overline{{\mathcal{F}}}$-saturated sets are ${\mathcal{F}}$-saturated. For the converse, let $U$ be open and $\mathcal{F}$-saturated, containing $L\in\mathcal{F}$, and suppose $\overline{L}\not\subset U$. Then there is some $x\in\overline{L}$ with $x\notin U$, hence ${\mathcal{F}}(x)\cap U=\emptyset$, thus $\overline{\mathcal{F}(x)}\cap U=\emptyset$, which is a contradiction since $\overline{\mathcal{F}(x)}=\overline{L}$.

\begin{proposition}\label{proposition: good cover}
A transversely compact, complete singular Riemannian foliation ${\mathcal{F}}$ admits a finite good cover $\mathcal{U}$ with the additional property that each foliate deformation retraction of a non-empty intersection of its elements onto a leaf closure in it is a homothetic retraction.
\end{proposition}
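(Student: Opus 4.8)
The plan is to build the cover in three stages, working with the leaf-closure foliation $\overline{\mathcal{F}}$ (which is again a singular Riemannian foliation by \cite[Main Theorem]{alex4}), and using a downward induction on the stratification by holonomy type described above. Since $M/\overline{\mathcal{F}}$ is compact, there are only finitely many holonomy-type strata $\mathcal{S}$, and they are partially ordered by the relation $\mathcal{S}\leq\mathcal{S}'$ iff $\mathcal{S}\subset\overline{\mathcal{S}'}$; the minimal ones are closed submanifolds. First I would fix, for each leaf closure $\overline{L}$, a uniformly $\mathcal{S}_{\overline{L}}$-normal neighborhood $W_{\overline{L}}$ as in Lemma \ref{lemma: uniformly normal}, shrunk so that it is contained in a distinguished tubular neighborhood $\mathrm{Tub}_r(\overline{L})$ that is strongly transversely convex (Lemma \ref{lemma: strongly transversely geodesic}). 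Inside such a tube the homothetic retraction $h_{\overline{L}}$ of Section 3.3 gives a foliate strong deformation retraction of any $\overline{L}$-star-shaped saturated subset onto $\overline{L}$; so the basic building blocks will be intersections of finitely many such tubes, and the work is to arrange that these intersections are themselves star-shaped about a leaf closure they contain.

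Next I would carry out the induction. Suppose a suitable finite collection of saturated open sets has been chosen covering the union of all strata of holonomy type $>\mathcal{S}$ (i.e. the "more singular" part is handled last, or rather the closed minimal strata are handled first — I would order so that I start from the most singular and enlarge outward along each stratum). For a point $x$ in the next stratum $\mathcal{S}$, take $W_x\subset W_{\overline{\mathcal{F}}(x)}$ a saturated neighborhood small enough that it is $\overline{\mathcal{F}}(x)$-star-shaped inside the distinguished tube, and small enough that it only meets tubes $\mathrm{Tub}(\overline{L})$ from the already-chosen collection whose centers $\overline{L}$ satisfy $\overline{\mathcal{F}}(x)\subset\overline{\mathcal{F}(y)}$ for $y$ ranging over that tube — this is possible because, by transverse convexity and the fact that strata cluster only "upward," for $W_x$ sufficiently small every leaf closure in a previously chosen tube that meets $W_x$ has $\overline{\mathcal{F}}(x)$ in its closure. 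By compactness of $M/\overline{\mathcal{F}}$, finitely many such $W_x$, together with the previously chosen sets, cover a neighborhood of all strata of holonomy type $\geq\mathcal{S}$; iterating over the finitely many strata produces a finite cover $\mathcal{U}=\{U_i\}$.

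The final point is to check the good-cover property: a nonempty intersection $U=U_{i_0}\cap\dots\cap U_{i_k}$ should deformation-retract homothetically onto a leaf closure inside it. Each $U_{i_j}$ is $\overline{L_j}$-star-shaped in $\mathrm{Tub}_{r_j}(\overline{L_j})$; among the centers $\overline{L_0},\dots,\overline{L_k}$ the construction guarantees a minimal one, say $\overline{L_0}$, whose stratum is $\leq$ all the others, so that $\overline{L_0}\subset\overline{\mathcal{F}(y)}$ for every $y\in U$ (hence in particular $\overline{L_0}\cap U\neq\emptyset$), and moreover $\overline{L_0}$ lies in every $\mathrm{Tub}_{r_j}(\overline{L_j})$. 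Then the homothetic retraction $h_{\overline{L_0}}$ of $\mathrm{Tub}_{r_0}(\overline{L_0})$ preserves each of the other tubes — because homothetic transformations centered at $\overline{L_0}$ send a leaf closure in $\mathrm{Tub}_{r_j}(\overline{L_j})$ to a nearer one, using strong transverse convexity of $\mathrm{Tub}_{r_j}(\overline{L_j})$ exactly as in the homogeneous case — and hence restricts to a foliate strong deformation retraction of $U$ onto $\overline{L_0}$, which is homothetic by construction. I expect the main obstacle to be the inductive control in the second paragraph: making precise, and proving, that once $W_x$ is small enough, the only previously-chosen tubes it meets are those centered at leaf closures that degenerate onto $\overline{\mathcal{F}}(x)$, and that the collection of centers appearing in any given intersection therefore has a unique minimal element lying in all the relevant tubes. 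This is where transverse convexity (Lemma \ref{lemma: strongly transversely geodesic}), the uniform normality (Lemma \ref{lemma: uniformly normal}), and the local structure of the holonomy-type stratification must be combined carefully; the homothety-preservation argument at the end is then a direct adaptation of \cite[Lemma A.3]{toben}, as in the proof of Lemma \ref{lemma: strongly transversely geodesic}.
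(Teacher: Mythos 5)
Your outline shares the paper's overall architecture: reduce to the closed case $\overline{\mathcal{F}}$, combine Lemma~\ref{lemma: uniformly normal} (uniform normality) and Lemma~\ref{lemma: strongly transversely geodesic} (strong transverse convexity) to choose good tubes, cover inductively stratum by stratum starting from the most singular holonomy-type strata, and then verify that the intersections of cover elements are star-shaped about a leaf closure. Up to that last step, this matches the paper.

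The verification step is where the proposal breaks down, and I do not think it can be repaired in the form you state. You select a center $\overline{L_0}$ in the most singular stratum among the centers $\overline{L_0},\dots,\overline{L_k}$ and assert that (a) $\overline{L_0}\cap U\neq\emptyset$ and (b) $\overline{L_0}$ lies inside every $\mathrm{Tub}_{r_j}(\overline{L_j})$. Claim~(b) is false in general, for exactly the reason the paper gives when explaining the gap in \cite[Lemma~2]{wolak}: a leaf $\overline{L_j}$ close to the more singular stratum of $\overline{L_0}$ has normal injectivity radius tending to zero, so its tube $\mathrm{Tub}_{r_j}(\overline{L_j})$ cannot be chosen to reach down to $\overline{L_0}$. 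The inductive control you impose also runs the wrong way: requiring that a newly built tube meet only older tubes that already contain the new tube's \emph{own} center yields $\overline{L_j}\subset U_i$ for $i<j$, not $\overline{L_0}\subset U_j$. The phrase ``$\overline{L_0}\subset\overline{\mathcal{F}(y)}$ for every $y\in U$'' is also not meaningful as written, since distinct leaf closures are disjoint; and even the intended statement about strata would not give~(a), since $\overline{L_0}$ may simply lie outside the (small) set $U$. Finally, the assertion that $h_{\overline{L_0}}$ preserves $\mathrm{Tub}_{r_j}(\overline{L_j})$ because it ``sends a leaf closure in $\mathrm{Tub}_{r_j}(\overline{L_j})$ to a nearer one'' is unjustified: the homothety is centered at $\overline{L_0}$, not at $\overline{L_j}$, and there is no reason it should decrease distances to $\overline{L_j}$.

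The paper avoids all of this by retracting onto a leaf closure that is \emph{not required to be a center of any tube}. One takes any leaf $L$ in $\mathcal{S}(i_U)\cap U$, where $\mathcal{S}(i_U)$ is the lowest-indexed (hence most singular) holonomy-type stratum meeting $U$. By construction, at least one $U_j$ is uniformly $\mathcal{S}(i_U)$-normal, which gives $U\subset U_j\subset\mathrm{Tub}_r(L)$ for some $r>0$ even though $L$ is not the center of $U_j$. Meanwhile $U$ is strongly transversely convex as a finite intersection of strongly transversely convex sets (the transverse uniqueness in the definition forces the minimal geodesic provided by each $U_i$ to coincide). These two facts together imply that for any $y\in U$ the radial minimal geodesic from $y$ to $L$ inside $\mathrm{Tub}_r(L)$ is the minimal geodesic between $\overline{\mathcal{F}(y)}$ and $L$ provided by strong transverse convexity, hence stays in $U$; so $U$ is $L$-star-shaped and the homothetic retraction onto $L$ restricts to $U$. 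It is the uniform normality of Lemma~\ref{lemma: uniformly normal}, applied to a non-center leaf, that replaces your claim~(b).
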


\begin{proof}
We begin by noticing that we can assume ${\mathcal{F}}$ is closed: a cover for $\overline{{\mathcal{F}}}$ with the desired properties will work for ${\mathcal{F}}$ as well, by what we just saw above.

Let $L\in{\mathcal{F}}$. By combining Lemmas \ref{lemma: uniformly normal} and \ref{lemma: strongly transversely geodesic}, we can choose $c_L>0$ such that $\mathrm{Tub}_{r}(L)$ is strongly transversely geodesic and uniformly $\mathcal{S}_L$-normal, for all $0<r<c_L$. Notice also that, since $M/{\mathcal{F}}$ is compact, there are finitely many holonomy type strata $\mathcal{S}(i)=\Sigma^{k(i)}_{\alpha(i)}(\Gamma(i))$, and we can assume they are ordered so that $i<j$ implies that $k(i)\leq k(j)$, and if moreover $\alpha(i)=\alpha(j)$, that $\Gamma(j)$ is conjugate to a subgroup of $\Gamma(i)$.

We now cover $M$ inductively as follows. Let $\mathcal{S}(1),\dots,\mathcal{S}(\ell_1)$ be the closed holonomy type strata. They are all disjoint, so we can choose tubular neighborhoods $\mathrm{Tub}_{r_L}(L)$ so that
$$\bigsqcup_{i=1}^{\ell_1}\mathcal{S}(i)\subset \bigsqcup_{i=1}^{\ell_1} \left(\bigcup_{L\in \mathcal{S}(i)} \mathrm{Tub}_{r_L}(L)\right)=:V(1).$$
Notice that in fact we can suppose the unions $\bigcup_{L\in \mathcal{S}(i)} U_L$ are pairwise disjoint by choosing $r_L<c_L$ appropriately. For the generic $k$-th step, we select the strata $\mathcal{S}(\ell_{k-1}+1),\dots,\mathcal{S}(\ell_k)$ for which $\mathcal{S}'(i):=\mathcal{S}(i)\setminus \bigcup_{j=1}^{k-1}V(j)$ is closed and non-empty. Then similarly, we take the cover
$$\bigsqcup_{i=\ell_{k-1}+1}^{\ell_k}\mathcal{S}'(i)\subset \bigsqcup_{i=\ell_{k-1}+1}^{\ell_k} \left(\bigcup_{L\in \mathcal{S}'(i)} \mathrm{Tub}_{r_L}(L)\right)=:V(k),$$
choosing $r_L$ so that furthermore $V(k)\cap\bigsqcup_{i=1}^{\ell_{k-1}}\mathcal{S}(i)=\emptyset$.

The process will end since there are finitely many holonomy type strata, furnishing us a cover $\{U_L:=\mathrm{Tub}_{r_L}(L)\}_{L\in{\mathcal{F}}}$. Since ${\mathcal{F}}$ is transversely compact and this cover is by saturated open sets, we can choose a finite subcover $\mathcal{U}$. Now notice that if $U=U_1\cap\dots\cap U_l$ with $U_i\in\mathcal{U}$, then by construction ${\mathcal{F}}{\vert}_U$ has a unique holonomy type stratum which is closed. In fact, it is $\mathcal{S}(i_U)\cap U$, where $i_U=\min\{i\ {\vert}\ \mathcal{S}(i)\cap U\neq \emptyset\}$. Then for any leaf $L\in \mathcal{S}(i_U)\cap U$, the set $U$ is contained in a tubular neighborhood $\mathrm{Tub}_r(L)$, since at least one of the sets $U_1,\dots, U_l$ is uniformly $\mathcal{S}_L$-normal. Moreover, $U$ is strongly transversely geodesic since each $U_1,\dots, U_l$ has this property. Therefore $U$ retracts by homothety to $L$. 
\end{proof}

\subsection{Basic cohomology is finite dimensional}

We can now apply the classical Mayer--Vietoris argument to conclude that $H({\mathcal{F}})$ is finite dimensional.

\begin{theorem}\label{theorem: basic cohomology is finite dimensional}
Let ${\mathcal{F}}$ be a transversely compact, complete singular Riemannian foliation. Then $\dim H({\mathcal{F}})<\infty$.
\end{theorem}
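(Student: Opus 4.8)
The plan is to run the standard Mayer--Vietoris induction on the number of sets in a finite good cover, using Proposition \ref{proposition: good cover}. First I would invoke that proposition to fix a finite good cover $\mathcal{U}=\{U_1,\dots,U_N\}$ of $(M,{\mathcal{F}})$ such that every nonempty intersection of elements of $\mathcal{U}$ foliate-deformation-retracts (in fact homothetically) onto the closure $\overline{L}$ of some leaf contained in it. The base case is to show that $H({\mathcal{F}}{\vert}_U)$ is finite dimensional whenever $U$ is such an intersection. By Corollary \ref{corollary: invariance of ebc under homotopies and deformation retracts} (in the case $\mathfrak{g}=0$, which is just homotopy invariance of ordinary basic cohomology under foliate deformation retractions), $H({\mathcal{F}}{\vert}_U)\cong H(\overline{{\mathcal{F}}}{\vert}_{\overline{L}})$, so it suffices to bound the basic cohomology of the foliation $\overline{\mathcal{F}}$ restricted to a single leaf closure $\overline L$. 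Since $\overline{\mathcal{F}}$ restricted to $\overline L$ has $\overline L$ itself as its unique leaf, its leaf space is a point, so $H(\overline{\mathcal F}|_{\overline L})\cong\mathbb R$, which is certainly finite dimensional. (Alternatively, one reduces to the leaf $L$ and uses that $H({\mathcal{F}}{\vert}_L)\cong H(L/{\mathcal{F}}{\vert}_L)=H(\mathrm{pt})$.)

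Next I would set up the induction. For $1\le k\le N$ let $W_k=U_1\cup\dots\cup U_k$; I claim $\dim H({\mathcal{F}}{\vert}_{W_k})<\infty$ for all $k$, and the theorem is the case $k=N$ (where $W_N=M$). The case $k=1$ is the base case above. For the inductive step, write $W_{k+1}=W_k\cup U_{k+1}$ and apply the Mayer--Vietoris sequence in basic cohomology for the saturated open sets $W_k$ and $U_{k+1}$ (recalled in the preliminaries):
\begin{equation*}
\cdots\longrightarrow H^{i-1}({\mathcal{F}}{\vert}_{W_k\cap U_{k+1}})\longrightarrow H^i({\mathcal{F}}{\vert}_{W_{k+1}})\longrightarrow H^i({\mathcal{F}}{\vert}_{W_k})\oplus H^i({\mathcal{F}}{\vert}_{U_{k+1}})\longrightarrow\cdots
\end{equation*}
By the inductive hypothesis $H({\mathcal{F}}{\vert}_{W_k})$ is finite dimensional, and $H({\mathcal{F}}{\vert}_{U_{k+1}})$ is finite dimensional by the base case; the intersection $W_k\cap U_{k+1}=\bigcup_{j\le k}(U_j\cap U_{k+1})$ is a union of at most $k$ of the intersection sets, so one needs finite dimensionality of $H$ of such unions as well. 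The clean way to handle this is a double induction, or simply to strengthen the induction hypothesis to: for every $J\subseteq\{1,\dots,N\}$, $\dim H({\mathcal{F}}{\vert}_{\bigcup_{j\in J}U_j})<\infty$, proved by induction on $|J|$; the step is exactly the Mayer--Vietoris argument above with $W_k$ replaced by $\bigcup_{j\in J\setminus\{m\}}U_j$ for some $m\in J$, using that $\bigl(\bigcup_{j\in J\setminus\{m\}}U_j\bigr)\cap U_m=\bigcup_{j\in J\setminus\{m\}}(U_j\cap U_m)$ is a union of $|J|-1$ good-cover intersection sets. In the exact sequence $H^i({\mathcal{F}}{\vert}_{\bigcup_J U_j})$ sits between two finite dimensional terms, hence is finite dimensional, and it vanishes for $i$ outside a bounded range (namely $0\le i\le\codim{\mathcal{F}}$, since there are no basic forms of degree exceeding the codimension), so the total dimension is finite.

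The only genuinely delicate point is the base case, i.e. understanding $H$ of a good-cover piece: this is where the homotopy invariance (Corollary \ref{corollary: invariance of ebc under homotopies and deformation retracts}) and the structure of $\overline{{\mathcal{F}}}$ as a singular Riemannian foliation (so that the retraction target is a single leaf closure and its basic cohomology is that of a point) are used. Everything else is the routine Mayer--Vietoris bookkeeping; I expect no obstacle there beyond being careful that the intersection sets appearing in the induction are again among the finitely many good-cover intersections, which is guaranteed since intersections of elements of $\mathcal{U}$ are closed under further intersection. I would also note explicitly that $\Omega^i({\mathcal{F}})=0$ for $i>\codim{\mathcal{F}}$, which makes $H({\mathcal{F}})$ a finite direct sum of the (now individually finite dimensional) groups $H^i({\mathcal{F}})$, completing the proof.
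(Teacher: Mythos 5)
Your Mayer--Vietoris induction is sound, and amounts to essentially the same argument the paper uses (the paper packages it slightly differently by noting that both $\bigcup_{i\geq2}U_i$ and its intersection with $U_1$ inherit good covers with $n-1$ elements, whereas you strengthen the hypothesis to arbitrary unions of cover elements; both versions work). The genuine error is in the base case. The homothetic retraction of a good-cover piece $U$ onto $\overline{L}$ is foliate with respect to $\mathcal{F}$, so Corollary \ref{corollary: invariance of ebc under homotopies and deformation retracts} yields $H(\mathcal{F}|_U) \cong H(\mathcal{F}|_{\overline{L}})$, with $\mathcal{F}$ --- not $\overline{\mathcal{F}}$ --- on the right-hand side. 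You then silently replace $\mathcal{F}|_{\overline{L}}$ by $\overline{\mathcal{F}}|_{\overline{L}}$, but these are very different foliations. The latter is indeed the trivial foliation of $\overline{L}$ by the single leaf $\overline{L}$, with basic cohomology $\mathbb{R}$; but the former is the regular Riemannian foliation of $\overline{L}$ whose leaves are the (typically dense, non-closed) leaves of $\mathcal{F}$ lying inside $\overline{L}$, and its basic cohomology is not $\mathbb{R}$ in general. For instance, for an irrational Kronecker foliation of $T^2$ one has $\overline{L}=T^2$, $\mathcal{F}|_{\overline{L}}=\mathcal{F}$, and $H^1(\mathcal{F})\neq 0$. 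Your parenthetical alternative, retracting to $L$ itself, has the same problem: the good-cover retraction lands on $\overline{L}$, not $L$, and $L$ need not be a saturated submanifold to which the homotopy corollary applies.

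Once you correctly land on $H(\mathcal{F}|_{\overline{L}})$, the base case is exactly the nontrivial input in the paper's proof: $\mathcal{F}|_{\overline{L}}$ is a regular, complete Riemannian foliation on the transversely compact manifold $\overline{L}$, and one must invoke the theorem of El Kacimi-Alaoui, Sergiescu and Hector \cite[Theorem 0]{kacimi} (or its transversely compact variant, \cite[Proposition 3.11]{goertsches}) to conclude finite dimensionality. This is a deep ingredient --- it is essentially the regular case of the theorem you are trying to prove --- and your argument cannot bypass it. Everything else in your proposal holds once that citation replaces the erroneous ``leaf space is a point'' step.
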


\begin{proof}
Let us prove this, more generally, for any $(M,\f)$ which admits a finite good cover and whose leaf closures are complete and transversely compact. We proceed by induction on the number $n$ of elements in a finite good cover $\mathcal{U}$. If $n=1$ then $M$ itself admits a foliate deformation retraction to a leaf closure $J=\overline{L}$. By Corollary \ref{corollary: invariance of ebc under homotopies and deformation retracts} (taking $\mathfrak{g}=0$) we have $H({\mathcal{F}})\cong H({\mathcal{F}}{\vert}_J)$. Since ${\mathcal{F}}{\vert}_J$ is a regular complete Riemannian foliation of the transversely compact manifold $J$, its basic cohomology is finite dimensional (see \cite[Proposition 3.11]{goertsches}, also \cite[Theorem 0]{kacimi}).

For the induction step, let $\mathcal{U}=\{U_1,\dots, U_n\}$ be a good cover. Defining $U=\bigcup_{i=2}^n U_i$, we easily see that both ${\mathcal{F}}{\vert}_U$ and ${\mathcal{F}}{\vert}_{U_1\cap U}$ are Riemannian foliations admitting good covers with $n-1$ elements. Thus, by the induction hypothesis, both extremes in the exact sequence
$$H^{k-1}({\mathcal{F}}{\vert}_{U_1\cap U})\longrightarrow H^{k}({\mathcal{F}}) \longrightarrow H^k({\mathcal{F}}{\vert}_{U_1})\oplus H^k({\mathcal{F}}{\vert}_{U}),$$
are finite dimensional, for each $k$. Hence $\dim H({\mathcal{F}})<\infty$.
\end{proof}

\section{The natural transverse action on a singular Killing foliation}\label{section: transverse action of structural algebra on a skf}

In analogy with the regular case, we define a \textit{singular Killing foliation} as a complete singular Riemannian foliation ${\mathcal{F}}$ whose Molino sheaf $\mathscr{C}_{\mathcal{F}}$ is a globally constant sheaf of Lie algebras of germs of transverse Killing vector fields. In other words, ${\mathcal{F}}$ is a complete singular Riemannian foliation with a globally constant, smooth $\mathscr{C}_{\mathcal{F}}$. An important class of Killing foliations is that of isometric homogeneous foliations.

\begin{example}[Homogeneous Riemannian foliations are Killing]\label{example: homogeneous Riemannian foliations are Killing}
Let $(M,{\mathcal{F}})$ be a complete homogeneous Riemannian foliation, given by the orbits of an isometric action. Then ${\mathcal{F}}$ is Killing, as it is clear from Example \ref{exe: molino sheaf of homogeneous singular foliation} that $\mathscr{C}_{\mathcal{F}}$ is a globally constant sheaf of Lie algebras of germs of transverse Killing vector fields.
\end{example}

Another important class of examples is the following.

\begin{example}[Orbit-like Riemannian foliations on simply connected manifolds are Killing]\label{orbit-like on 1connected are killing}
As we saw in Theorem \ref{proposition: strong molino conjecture for orbit-like}, if ${\mathcal{F}}$ is an orbit-like foliation then $\mathscr{C}_{\mathcal{F}}$ is smooth. Hence ${\mathcal{F}}$ is Killing when $M$ is simply connected, since in this case $\mathscr{C}_{\mathcal{F}}$ has trivial monodromy and is thus globally constant.
\end{example}

Let ${\mathcal{F}}$ be a singular Killing foliation. As we saw in Section \ref{section: (Singular) Riemannian foliations}, the Molino sheaf $\mathscr{C}_\alpha$ of the restriction ${\mathcal{F}}_\alpha$ of ${\mathcal{F}}$ to a stratum $\Sigma_\alpha$ is a quotient of $\mathscr{C}_{{\mathcal{F}}}$, hence constant. We already mentioned in Section \ref{section: molino theory for srf} that ${\mathcal{F}}_\alpha$ is not complete in general, but its holonomy pseudogroup is, so we can say that each ${\mathcal{F}}_\alpha$ is a regular Killing foliation in this generalized sense (i.e., with completeness of the metric changed to completeness of the pseudogroup). In particular, the structural algebra $\mathfrak{a}$ of ${\mathcal{F}}$ is Abelian (notice that $\mathfrak{a}$ is well defined even if $M$ is not connected, since $\mathscr{C}_{\mathcal{F}}$ is a constant sheaf). It acts transversely on ${\mathcal{F}}$ via the isomorphism $\mathfrak{a}\cong \mathscr{C}_{\mathcal{F}}(M)$, and $\mathfrak{a}{\mathcal{F}}=\overline{{\mathcal{F}}}$. From what we saw in Section \ref{section: equivariant basic cohomology}, with this natural transverse $\mathfrak{a}$-action, $\Omega({\mathcal{F}})$ becomes an $\mathfrak{a}^\star$-algebra and we can consider the equivariant cohomology $H_{\mathfrak{a}}({\mathcal{F}})$. It is then expected that $H_{\mathfrak{a}}({\mathcal{F}})$ carries information on the closed leaves of ${\mathcal{F}}$, since
$$M^{\mathfrak{a}}=\Sigma_{\mathrm{cl}}.$$
Before we investigate that, let us establish some basic tools, the first one being the fact that the $\mathfrak{a}$-action behaves well under homothetic transformations.

\subsection{Homothetic invariance of the Molino sheaf}\label{section: homothetic invariance of Molino sheaf}

Let $\mathrm{Tub}_{r}(P)$ be a distinguished tubular neighborhood and $X$ be a smooth vector field on $\mathrm{Tub}_{r}(P)$ which is tangent to $P$. The \textit{linearization of $X$ around $P$} is the vector field $X^\ell$ given by
$$X^\ell=\lim_{\lambda\to 0} \dif h_\lambda^{-1}(X\circ h_\lambda).$$
By choosing adequate coordinates one verifies that $X^\ell$ is in fact well defined, smooth, $h_\lambda$-invariant for all admissible $\lambda$ (including $\lambda=0$) and moreover $X^\ell{\vert}_P=X{\vert}_P$ (see \cite[Proposition 13]{mendes}). Furthermore, $X^\ell$ is foliate when $X$ is foliate (see \cite[Proposition 2.14]{inagaki}). The next lemma will help us prove that sections of $\mathscr{C}_{{\mathcal{F}}}$ are invariant under certain homothetic transformations.

\begin{lemma}\label{lemma: invariance of molino sheaf under homothetic transformations}
Let $(M,{\mathcal{F}})$ be a complete singular Riemannian foliation with smooth Molino sheaf, $J\subset M$ an $\overline{{\mathcal{F}}}$-saturated homothety center, and $h_\lambda:\mathrm{Tub}_{r}(J)\to \mathrm{Tub}_{\lambda r}(J)$ a homothetic transformation, with $\lambda>0$. If $\tilde{X}$ is a foliate representative of a section $X\in\mathscr{C}_{\mathcal{F}}(\mathrm{Tub}_r(P))$, for $P\subset J$ an open subset, then $\dif (h_\lambda)_x\tilde{X}_x-\tilde{X}_{h_\lambda(x)}\in T_{h_\lambda(x)}{\mathcal{F}}$, for all $x\in \mathrm{Tub}_r(P)$.
\end{lemma}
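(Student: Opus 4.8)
The plan is to show that $X|_{\mathrm{Tub}_r(P)}$ and $h_\lambda$ are essentially compatible modulo the foliation, exploiting the fact that $X$ is a section of the Molino sheaf and that $h_\lambda$ is foliate with respect to the restrictions of $\mathcal{F}$ (Molino's homothetic lemma). First I would replace $\tilde X$ by its linearization $X^\ell$ around $J$. By the cited properties (\cite[Proposition 13]{mendes}, \cite[Proposition 2.14]{inagaki}), $X^\ell$ is a smooth, foliate, $h_\lambda$-invariant vector field on $\mathrm{Tub}_r(J)$ which agrees with $\tilde X$ along $J$; crucially, $X^\ell$ being $h_\lambda$-invariant means precisely $\dif(h_\lambda)_x X^\ell_x = X^\ell_{h_\lambda(x)}$ for all $x$. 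So it suffices to prove that $X^\ell$ is itself a foliate representative of $X$, i.e.\ that $\tilde X - X^\ell \in \mathfrak X(\mathcal F)$ (equivalently $\tilde X_x - X^\ell_x \in T_x\mathcal F$ for all $x$); the desired conclusion then follows by writing $\dif(h_\lambda)_x \tilde X_x - \tilde X_{h_\lambda(x)} = \dif(h_\lambda)_x(\tilde X_x - X^\ell_x) + (X^\ell_{h_\lambda(x)} - \tilde X_{h_\lambda(x)})$, where the first term lies in $T_{h_\lambda(x)}\mathcal F$ because $h_\lambda$ is foliate (hence $\dif h_\lambda$ maps $T\mathcal F$ into $T\mathcal F$) and the second lies in $T_{h_\lambda(x)}\mathcal F$ by the same claim applied at the point $h_\lambda(x)$.

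So the core of the argument reduces to: the linearization $X^\ell$ of a foliate representative $\tilde X$ of a Molino section $X$ is again a representative of the \emph{same} section. The idea is that two foliate fields represent the same element of $\mathscr{C}_{\mathcal F}(U)$ iff they induce the same transverse vector field, which is a pointwise (germ-wise) condition that can be checked on the dense regular locus $\Sigma_{\mathrm{reg}}$, where $\mathcal F$ is regular and the transverse field can be read off in a local quotient. Along the homothety orbits we have $X^\ell = \lim_{\mu\to 0}\dif h_\mu^{-1}(\tilde X\circ h_\mu)$; since each $h_\mu$ is foliate, $\dif h_\mu^{-1}(\tilde X\circ h_\mu)$ is a foliate field whose induced transverse field is $(h_\mu)_*^{-1}$ applied to the transverse field of $\tilde X$ — but the transverse field of $\tilde X$ is the section $X$, which being a germ of a \emph{transverse Killing field} pulled from the Molino sheaf is invariant under the holonomy and in particular matches under the $h_\mu$ identification (this is where I would invoke that $X$ restricts to a transverse Killing field on each stratum and that $h_\mu$ preserves the transverse metric structure in the relevant sense). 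Passing to the limit $\mu\to 0$ and using continuity of the transverse structure, $X^\ell$ induces the same transverse field $X$, hence $\tilde X - X^\ell$ is tangent to the leaves.

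The main obstacle I anticipate is making the last paragraph precise at the singular points, i.e.\ justifying that "$h_\mu$ carries the transverse field induced by $\tilde X$ to itself" uniformly as $\mu\to 0$, through the singular locus $\Sigma_{\mathrm{sing}}$ where there is no honest local quotient and the Molino section is only continuous a priori (though smooth by hypothesis, via Theorem \ref{proposition: strong molino conjecture for orbit-like} in the orbit-like case, or by the standing assumption that $\mathscr{C}_{\mathcal F}$ is smooth). I would handle this by first proving the tangency $\tilde X - X^\ell \in T\mathcal F$ on the open dense set $\Sigma_{\mathrm{reg}}\cap \mathrm{Tub}_r(P)$, where everything is smooth and regular and the homothetic lemma plus the defining property of the Molino sheaf (that $X$ is a germ of transverse Killing field, hence determined by its restriction to $\Sigma_{\mathrm{reg}}$, cf.\ \cite[Proposition 6.8]{molino}) apply cleanly; then extend to all of $\mathrm{Tub}_r(P)$ by continuity, since both $\tilde X$ and $X^\ell$ are smooth (in particular continuous) and $T\mathcal F$ is closed in the appropriate sense along each stratum closure. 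A secondary technical point is the admissibility of the various $\lambda, \mu$ and the fact that $J$ being $\overline{\mathcal F}$-saturated guarantees the tubular neighborhoods and homotheties interact well with $\overline{\mathcal F}$ (hence with the Molino sheaf whose orbits define $\overline{\mathcal F}$), which is exactly the hypothesis imposed; I would remark that this saturation hypothesis is what lets us apply Molino's homothetic lemma to $\overline{\mathcal F}$ as well as to $\mathcal F$.
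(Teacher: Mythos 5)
Your reduction of the Lemma to the claim ``$\tilde X - X^\ell \in \mathfrak X(\mathcal F)$'' (i.e.\ that the linearization $X^\ell$ is itself a representative of $X$) is logically valid, and the final algebraic decomposition $\dif(h_\lambda)_x\tilde X_x - \tilde X_{h_\lambda(x)} = \dif(h_\lambda)_x(\tilde X_x - X^\ell_x) + (X^\ell_{h_\lambda(x)} - \tilde X_{h_\lambda(x)})$ works as you say. But note that this inverts the paper's logical order: in the text, the assertion that $\tilde X^\ell$ represents $X$ is Proposition \ref{proposition: invariance of molino sheaf under homothetic transformations}, which is \emph{derived from} the Lemma, not the other way around. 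So to make your route non-circular you need an independent proof of the core claim, and that is where the gap lies.

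The gap is in your justification that $(h_\mu)_*^{-1}X = X$ as transverse fields. You appeal to (i) ``$X$ is invariant under the holonomy'' and (ii) ``$h_\mu$ preserves the transverse metric and $X$ is transverse Killing.'' Neither of these gives the needed invariance. Holonomy is a leafwise notion (germs of transition diffeomorphisms of local quotients along leaves), while $h_\mu$ moves transversally across the foliation, so holonomy invariance of a Molino section says nothing about its behaviour under $h_\mu$. And $h_\mu$ preserving the transverse metric would only make $(h_\mu)_*X$ again a transverse \emph{Killing} field, not the \emph{same} transverse Killing field -- isometries preserve the Lie algebra of Killing fields, not each individual element. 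What actually forces $(h_\mu)_*X = X$ is the \emph{centralizer} characterization of the Molino sheaf: on each stratum $\Sigma$, the section $X|_\Sigma$ commutes with every global transverse vector field in $\mathfrak l(\mathcal F|_\Sigma)$ (see \cite[p.~160]{molino}). In particular $X$ commutes with the transverse radial field $R$ induced by the foliate radial vector field $\tilde R$ around $J$, and since the homothetic transformations are (up to reparametrization) the flow of $\tilde R$, the relation $[X,R]=0$ gives the desired $h_\mu$-invariance of $X$, which one then makes rigorous by the ODE/integration argument along the flow $\theta_t$ of $\tilde R$. That single ingredient -- commutativity with the radial transverse field -- is absent from your proposal, and without it the crucial step is essentially a restatement of the conclusion rather than a proof of it.
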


\begin{proof}
It suffices to show this for $\lambda\in(0,1]$. In fact, assuming this holds, the result for an admissible $\lambda>1$ follows by the chain rule (as $h_\lambda\circ h_{1/\lambda}=\mathrm{id}_{\mathrm{Tub}_r(J)}$), and the fact that homothetic transformations are foliate.

Since $h_t\circ h_s=h_{ts}$, we have that $h_{e^t}(x)$ defines a (local) flow on $\mathrm{Tub}_{r}(J)$. Consider the infinitesimal generator $\tilde{R}$ of this flow, that is
$$\tilde{R}(x)=\left.\od{}{t}h_{e^t}(x)\right\vert_{t=0}.$$
It is the foliate radial vector field around $J$, and in particular it is tangent to each stratum of ${\mathcal{F}}$ that it intersects. For a fixed $x\in \mathrm{Tub}_r(P)$, take $r'<r$ with $x\in\mathrm{Tub}_{r'}(J)$ and extend $\tilde{R}$ to a global foliate field by multiplying it with a basic bump function for $\mathrm{Tub}_{r'}(J)$ with support in $\overline{\mathrm{Tub}_{r}(J)}$ and defining $\tilde{R}=0$ outside that set. We will denote the flow of $\tilde{R}$ by $\theta:\mathbb{R}\times M\to M$.

Now since $X$ is a section of $\mathscr{C}_{\mathcal{F}}$, its restriction $X{\vert}_\Sigma$ to the stratum $\Sigma\ni x$ commutes with each global transverse vector field in $\mathfrak{l}({\mathcal{F}}{\vert}_\Sigma)$ (see \cite[p. 160]{molino}), in particular with the transverse field induced by the restriction $\tilde{R}{\vert}_\Sigma$. This means $[\tilde{X}{\vert}_\Sigma,\tilde{R}{\vert}_\Sigma]$ is tangent to ${\mathcal{F}}$ on $U=\Sigma\cap\mathrm{Tub}_{r}(P)$. Define $\tilde{Z}(t)=d(\theta_{-t})_{\theta_t(x)}(\tilde{X}_{\theta_t(x)})$. Then
$$\left.\od{}{t}\tilde{Z}(t)\right\vert_{t=s}=\dif(\theta_{-s})[\tilde{X}{\vert}_\Sigma,\tilde{R}{\vert}_\Sigma]_{\theta_s(x)}\in T_x{\mathcal{F}},$$
since $\theta_{-s}$ is foliate. Moreover, $\tilde{Z}(0)=\tilde{X}_x$, therefore
$$\tilde{Z}(t)-\tilde{X}_x=\int_0^t \dif(\theta_{-s})[\tilde{X}{\vert}_\Sigma,\tilde{R}{\vert}_\Sigma]_{\theta_s(x)}\;ds\in T_x{\mathcal{F}}$$
for each $t$. Thus
$$\dif(\theta_t)_x(\tilde{X}_x-\tilde{Z}(t))=\dif(\theta_t)_x\tilde{X}_x-\tilde{X}_{\theta_t(x)}\in T_{\theta_t(x)}{\mathcal{F}}.$$
By construction, for $t\leq 0$ the flow $\theta_t$ is given by $h_{e^t}$ on $\mathrm{Tub}_{r'}(J)$, hence
$$\dif (h_\lambda)_x\tilde{X}_x-\tilde{X}_{h_\lambda(x)}\in T_{h_\lambda(x)}{\mathcal{F}},$$
for $\lambda\in(0,1]$.
\end{proof}

We can now prove that sections of $\mathscr{C}_{\mathcal{F}}$ admit linearized representatives.

\begin{proposition}[Homothetic invariance of the Molino sheaf]\label{proposition: invariance of molino sheaf under homothetic transformations}
Let $(M,{\mathcal{F}})$ be a complete singular Riemannian foliation with smooth Molino sheaf, $J\subset M$ an $\overline{{\mathcal{F}}}$-saturated homothety center, $h_\lambda:\mathrm{Tub}_{r}(J)\to \mathrm{Tub}_{\lambda r}(J)$ a homothetic transformation, and $P\subset J$ an open subset. Then any $X\in\mathscr{C}_{\mathcal{F}}(\mathrm{Tub}_r(P))$ admits a linearized representative, and hence is $h_\lambda$-invariant (in particular $\rho_P$-invariant).
\end{proposition}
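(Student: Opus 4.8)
The plan is to use Lemma \ref{lemma: invariance of molino sheaf under homothetic transformations} to show that the linearization $X^\ell$ of a (smooth) foliate representative $\tilde{X}$ of $X$ is itself a representative of $X$; homothetic invariance then comes for free, since $X^\ell$ is $h_\lambda$-invariant for every admissible $\lambda$ (including $\lambda = 0$) by the properties of linearization recalled just before Lemma \ref{lemma: invariance of molino sheaf under homothetic transformations}. So the whole content is the first assertion: $X^\ell$ and $\tilde{X}$ differ by a field tangent to $\mathcal{F}$.

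First I would fix a foliate representative $\tilde{X}$ of $X$ on $\mathrm{Tub}_r(P)$, which exists and can be taken smooth because $\mathscr{C}_{\mathcal{F}}$ is smooth; since $J$ lies in a single stratum and $X$ restricts to a transverse field there, $\tilde{X}$ is tangent to $P$, so $X^\ell$ is defined. Next I would unwind the definition $X^\ell = \lim_{\lambda \to 0} \dif h_\lambda^{-1}(\tilde{X}\circ h_\lambda)$ at a point $x \in \mathrm{Tub}_r(P)$: for each $\lambda \in (0,1]$, Lemma \ref{lemma: invariance of molino sheaf under homothetic transformations} gives $\dif (h_\lambda)_x \tilde{X}_x - \tilde{X}_{h_\lambda(x)} \in T_{h_\lambda(x)}\mathcal{F}$, equivalently $\dif (h_\lambda)^{-1}_{h_\lambda(x)}(\tilde{X}_{h_\lambda(x)}) - \tilde{X}_x \in T_x \mathcal{F}$, because $h_\lambda$ is foliate and hence $\dif h_\lambda^{-1}$ carries $T\mathcal{F}$ to $T\mathcal{F}$. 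The vector $\dif (h_\lambda)^{-1}_{h_\lambda(x)}(\tilde{X}_{h_\lambda(x)})$ is exactly the value at $x$ of the field $\dif h_\lambda^{-1}(\tilde{X}\circ h_\lambda)$ whose limit defines $X^\ell$.

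Now I would take the limit $\lambda \to 0$. The subspace $T_x\mathcal{F} \subset T_xM$ is a fixed finite-dimensional (hence closed) linear subspace, and $\dif (h_\lambda)^{-1}_{h_\lambda(x)}(\tilde{X}_{h_\lambda(x)}) - \tilde{X}_x$ lies in it for every $\lambda \in (0,1]$; since $X^\ell_x - \tilde{X}_x$ is the limit of these vectors as $\lambda \to 0$ (the limit exists by the well-definedness of $X^\ell$), it lies in $T_x\mathcal{F}$ as well. As $x \in \mathrm{Tub}_r(P)$ was arbitrary, $X^\ell - \tilde{X} \in \mathfrak{X}(\mathcal{F})$ on $\mathrm{Tub}_r(P)$, so $X^\ell$ is a foliate representative of the same transverse field $X$. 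Being linearized, $X^\ell$ is $h_\lambda$-invariant for all admissible $\lambda$, and taking $\lambda = 0$ gives $\rho_P$-invariance; this proves that $X$ is $h_\lambda$-invariant.

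The only delicate point is the passage to the limit: one must be sure that $X^\ell$ is genuinely well defined and that the defining limit is taken in a topology (pointwise on tangent vectors suffices here) in which membership in the closed subspace $T_x\mathcal{F}$ is preserved — both of which are guaranteed by the cited structure of the linearization \cite[Proposition 13]{mendes} and the foliateness statement \cite[Proposition 2.14]{inagaki}. Everything else is bookkeeping with the chain rule and the fact that homothetic transformations are foliate (Molino's homothetic lemma), which also handles the reduction in Lemma \ref{lemma: invariance of molino sheaf under homothetic transformations} from general admissible $\lambda$ to $\lambda \in (0,1]$.
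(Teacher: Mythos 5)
Your proposal is correct and takes essentially the same route as the paper's proof: fix a foliate representative $\tilde{X}$, use Lemma \ref{lemma: invariance of molino sheaf under homothetic transformations} together with the fact that $h_\lambda$ is foliate to show $\tilde{X}-\dif h_\lambda^{-1}(\tilde{X}\circ h_\lambda)\in\mathfrak{X}(\mathcal{F})$ for each admissible $\lambda$, pass to the limit $\lambda\to0$ to conclude $\tilde{X}-\tilde{X}^\ell\in\mathfrak{X}(\mathcal{F})$, and then invoke the built-in $h_\lambda$-invariance of the linearization. Your write-up is if anything slightly more careful than the paper's, since you make explicit both the tangency of $\tilde{X}$ to $P$ (so that $\tilde{X}^\ell$ is defined) and the closed-subspace argument that justifies passing membership in $T_x\mathcal{F}$ through the limit.
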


\begin{proof}
Let $\tilde{X}$ be any representative for $X$ on $\mathrm{Tub}_r(P)$. By Lemma \ref{lemma: invariance of molino sheaf under homothetic transformations}, $\dif (h_\lambda)\tilde{X}-\tilde{X}\circ h_\lambda$ is tangent to ${\mathcal{F}}$, hence $\dif (h_{1/\lambda})\dif (h_\lambda)\tilde{X}-\dif (h_\lambda)\tilde{X}\circ h_\lambda$ is tangent to ${\mathcal{F}}$ as well. Then
$$\tilde{X}-\lim_{\lambda \to 0} \dif (h_\lambda)\tilde{X}\circ h_\lambda=\tilde{X}-\tilde{X}^\ell$$
must also be tangent to ${\mathcal{F}}$, and thus $\tilde{X}^\ell$ is a representative for $X$. We already know that $\tilde{X}^\ell$ is $h_\lambda$-invariant for any admissible $\lambda$.
\end{proof}

An $\overline{{\mathcal{F}}}$-saturated homothety center $J\subset M$ is $\mathfrak{a}$-invariant, since $\mathfrak{a}{\mathcal{F}}=\overline{{\mathcal{F}}}$. Notice that an ${\mathcal{F}}$-saturated open set is also $\mathfrak{a}$-invariant, since it must be $\overline{{\mathcal{F}}}$-saturated. By Proposition \ref{proposition: invariance of molino sheaf under homothetic transformations}, a homothetic retraction
$$h_J:U\times[0,1]\to U$$
is $\mathfrak{a}$-equivariant, for $U\subset\mathrm{Tub}_r(J)$ any $J$-star-shaped open set (in particular, for $U=\mathrm{Tub}_r(J)$). Let us sum this up in the following corollary.

\begin{corollary}\label{corollary: homothetic retractions are a-equivariant}
Let $(M,{\mathcal{F}})$ be a singular Killing foliation. A homothetic retraction $h_J:U\times[0,1]\to U$ onto an $\overline{{\mathcal{F}}}$-saturated homothety center $J\subset M$ is a foliate, $\mathfrak{a}$-equivariant deformation retraction of $U$ onto $J$.
\end{corollary}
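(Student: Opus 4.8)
The plan is to assemble Corollary~\ref{corollary: homothetic retractions are a-equivariant} from the three ingredients already developed in this section: the identification of saturated sets with $\mathfrak{a}$-invariant sets, the homothetic invariance of sections of $\mathscr{C}_{\mathcal{F}}$ (Proposition~\ref{proposition: invariance of molino sheaf under homothetic transformations}), and the general homotopy machinery of Section~2 (in particular the notion of a foliate $\mathfrak{a}$-equivariant map and Corollary~\ref{corollary: invariance of ebc under homotopies and deformation retracts}). First I would recall that for a singular Killing foliation the structural algebra $\mathfrak{a}$ acts transversely via $\mathfrak{a}\cong\mathscr{C}_{\mathcal{F}}(M)$ with $\mathfrak{a}{\mathcal{F}}=\overline{{\mathcal{F}}}$, so an $\overline{{\mathcal{F}}}$-saturated homothety center $J$ is $\mathfrak{a}$-invariant and any $J$-star-shaped ${\mathcal{F}}$-saturated open set $U\subset\mathrm{Tub}_r(J)$ is $\mathfrak{a}$-invariant as well — this is exactly the paragraph preceding the corollary, so it can be cited directly.

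Next I would verify that the homothetic retraction $h_J\colon U\times[0,1]\to U$, $h_J(x,t)=h_{1-t}(x)$, is a foliate $\mathfrak{a}$-equivariant homotopy. That it is a foliate strong deformation retraction of $U$ onto $J$ is already recorded in the subsection on homotheties (Molino's homothetic lemma gives that each $h_\lambda$ is foliate, $h_\lambda|_J=\mathrm{id}_J$, and $h_0=\rho_P$ realizes the retraction; here $P=J$). For the equivariance: fix $X\in\mathfrak{a}$ and let $\tilde{X}$ be a foliate representative of $\mu(X)\in\mathfrak{l}({\mathcal{F}})$ on $\mathrm{Tub}_r(J)$, coming from the section of $\mathscr{C}_{\mathcal{F}}$ over $\mathrm{Tub}_r(J)$. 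Since $J$ is an $\overline{{\mathcal{F}}}$-saturated homothety center, Proposition~\ref{proposition: invariance of molino sheaf under homothetic transformations} (applied with $P=J$) says $X$ admits a linearized, hence $h_\lambda$-invariant, representative $\tilde{X}^\ell$ for every admissible $\lambda\in[0,1]$. Thus $\mu(X)$ is $h_{1-t}$-related to itself for each $t\in[0,1]$, which is precisely the statement that $(h_J)_t$ is $\mathfrak{a}$-equivariant; doing this for every $X\in\mathfrak{a}$ gives that $h_J$ is an $\mathfrak{a}$-equivariant foliate homotopy. Restricting attention to $U$ rather than all of $\mathrm{Tub}_r(J)$ changes nothing, since $U$ is saturated and $h_\lambda$-invariant for $\lambda\in[0,1]$ by hypothesis, and sections of $\mathscr{C}_{\mathcal{F}}$ restrict.

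Putting these together, $h_J$ is a foliate, $\mathfrak{a}$-equivariant deformation retraction of $U$ onto the $\mathfrak{a}$-invariant submanifold $J$, which is the assertion of the corollary. I do not expect any genuine obstacle here: the corollary is essentially a bookkeeping consequence of Proposition~\ref{proposition: invariance of molino sheaf under homothetic transformations} together with the dictionary ``$\overline{{\mathcal{F}}}$-saturated $=$ $\mathfrak{a}$-invariant''. The only point that needs a word of care is making sure the representative used to witness $\mathfrak{a}$-equivariance can be taken to be $h_\lambda$-invariant simultaneously for all $\lambda\in[0,1]$ and not merely for a single $\lambda$ — but that is exactly what ``linearized representative'' buys us, since $\tilde{X}^\ell$ is $h_\lambda$-invariant for every admissible $\lambda$, including $\lambda=0$. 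One should also note that this simultaneously records that $\rho_J=h_0$ is $\mathfrak{a}$-equivariant, which is the parenthetical claim; combined with Corollary~\ref{corollary: invariance of ebc under homotopies and deformation retracts} it yields $H_{\mathfrak{a}}({\mathcal{F}}|_U)\cong H_{\mathfrak{a}}({\mathcal{F}}|_J)$, which is how the corollary will be used later.
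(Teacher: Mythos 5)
Your argument is correct and matches the paper exactly: the paper derives this corollary in the paragraph preceding it by observing that $\overline{\mathcal{F}}$-saturated sets are $\mathfrak{a}$-invariant (since $\mathfrak{a}\mathcal{F}=\overline{\mathcal{F}}$) and then invoking Proposition~\ref{proposition: invariance of molino sheaf under homothetic transformations} for the $\mathfrak{a}$-equivariance of each $h_{1-t}$, which is precisely your chain of reasoning. Your additional care in noting that the linearized representative $\tilde{X}^\ell$ is $h_\lambda$-invariant simultaneously for all $\lambda\in[0,1]$ (including $\lambda=0$) makes explicit a point the paper leaves implicit, but the route is the same.
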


It follows that the good cover $\mathcal{U}$ constructed in Proposition \ref{proposition: good cover} is \textit{equivariant}: any non-empty intersection $U$ of its elements admits a foliate, $\mathfrak{a}$-equivariant, deformation retraction onto the closure of some leaf $L_U\subset U$.

\subsection{Calculating tools}

Combining Corollary \ref{corollary: homothetic retractions are a-equivariant} and Corollary \ref{corollary: invariance of ebc under homotopies and deformation retracts}, we obtain the following property, which is crucial.

\begin{proposition}[Poincaré lemma for equivariant basic cohomology]\label{proposition: poincare lema for equivariant basic cohomology}
Let $(M,{\mathcal{F}})$ be a singular Killing foliation and suppose $J\subset M$ is an $\overline{{\mathcal{F}}}$-saturated homothety center. Then for any $r>0$ such that $\mathrm{Tub}_r(J)$ is a distinguished tubular neighborhood and any $J$-star-shaped open set $U\subset\mathrm{Tub}_r(J)$, the inclusion $J\to U$ induces an isomorphism
$$H_{\mathfrak{a}}({\mathcal{F}}{\vert}_U)\cong H_{\mathfrak{a}}({\mathcal{F}}{\vert}_J)$$
of $\sym(\mathfrak{a}^*)$-algebras.
\end{proposition}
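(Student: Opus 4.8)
The plan is to deduce this statement directly from the deformation-retraction machinery already assembled. The key observation is that the hypotheses set up precisely the situation needed to invoke Corollary \ref{corollary: homothetic retractions are a-equivariant}: since $J\subset M$ is an $\overline{{\mathcal{F}}}$-saturated homothety center and $U\subset\mathrm{Tub}_r(J)$ is $J$-star-shaped, the homothetic retraction $h_J:U\times[0,1]\to U$, given by $h_J(x,t)=h_{1-t}(x)$, is well-defined as a foliate strong deformation retraction of $U$ onto $J$ (this is the content of the discussion of homothetic retractions in the \textit{Homotheties} subsection). By Corollary \ref{corollary: homothetic retractions are a-equivariant}, which relies on Proposition \ref{proposition: invariance of molino sheaf under homothetic transformations} (each section of $\mathscr{C}_{\mathcal{F}}$ has an $h_\lambda$-invariant, hence linearized, representative), this retraction is moreover $\mathfrak{a}$-equivariant.

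With that in hand, I would simply apply Corollary \ref{corollary: invariance of ebc under homotopies and deformation retracts}: a foliate $\mathfrak{a}$-equivariant deformation retraction of $U$ onto the $\mathfrak{a}$-invariant submanifold $J$ induces an isomorphism $H_{\mathfrak{a}}({\mathcal{F}}{\vert}_U)\cong H_{\mathfrak{a}}({\mathcal{F}}{\vert}_J)$ of $\sym(\mathfrak{a}^*)^{\mathfrak{a}}$-algebras. Since $\mathfrak{a}$ is Abelian, $\sym(\mathfrak{a}^*)^{\mathfrak{a}}=\sym(\mathfrak{a}^*)$, so this is an isomorphism of $\sym(\mathfrak{a}^*)$-algebras, as claimed. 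I should check two small points for cleanliness: first, that $J$ is indeed $\mathfrak{a}$-invariant — this follows because $\mathfrak{a}{\mathcal{F}}=\overline{{\mathcal{F}}}$ and $J$ is $\overline{{\mathcal{F}}}$-saturated, as already noted right after Proposition \ref{proposition: invariance of molino sheaf under homothetic transformations}; second, that $U$ is itself $\mathfrak{a}$-invariant, which holds since any ${\mathcal{F}}$-saturated open set is $\overline{{\mathcal{F}}}$-saturated, hence $\mathfrak{a}$-invariant, and $U$ being $J$-star-shaped is in particular saturated.

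There is essentially no serious obstacle here: the statement is a formal consequence of results already proved, and the only thing to be careful about is that the isomorphism is an isomorphism of $\sym(\mathfrak{a}^*)$-algebras rather than merely of graded vector spaces — but this is already guaranteed by Corollary \ref{corollary: invariance of ebc under homotopies and deformation retracts}, which states the algebra structure explicitly. The real work was done upstream in Proposition \ref{proposition: invariance of molino sheaf under homothetic transformations}; this Poincaré lemma is the natural packaging of that work into a computational tool, and the proof is correspondingly short. If anything requires a word of justification, it is the identification $h_J(x,t)=h_{1-t}(x)$ as a genuine deformation retraction — i.e., that $h_0=\rho_P$ restricted to the relevant set fixes $J$ pointwise and that $h_\lambda$ is the identity on $J$ for all admissible $\lambda$ — but this too is recorded in the \textit{Homotheties} subsection and in \cite[Proposition 13]{mendes}.

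\begin{proof}
Since $J$ is an $\overline{{\mathcal{F}}}$-saturated homothety center, it is $\mathfrak{a}$-invariant, because $\mathfrak{a}{\mathcal{F}}=\overline{{\mathcal{F}}}$. Likewise, the $J$-star-shaped open set $U$ is saturated, hence $\overline{{\mathcal{F}}}$-saturated, hence $\mathfrak{a}$-invariant. By definition of $J$-star-shaped, the homothetic transformations $h_\lambda$ preserve $U$ for all $\lambda\in[0,1]$, so the homothetic retraction
$$h_J:U\times[0,1]\longrightarrow U,\qquad h_J(x,t)=h_{1-t}(x),$$
is a foliate strong deformation retraction of $U$ onto $J$: indeed $h_1=\mathrm{id}_U$, the maps $h_\lambda$ restrict to the identity on $J$ for every admissible $\lambda$, and $h_0=\rho_J$ maps $U$ onto $J$. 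By Proposition \ref{proposition: invariance of molino sheaf under homothetic transformations}, any section of $\mathscr{C}_{\mathcal{F}}$ over a subset of $\mathrm{Tub}_r(J)$ admits a linearized, and therefore $h_\lambda$-invariant, representative; consequently, by Corollary \ref{corollary: homothetic retractions are a-equivariant}, $h_J$ is $\mathfrak{a}$-equivariant. Applying Corollary \ref{corollary: invariance of ebc under homotopies and deformation retracts} to this foliate, $\mathfrak{a}$-equivariant deformation retraction onto the $\mathfrak{a}$-invariant submanifold $J$ yields an isomorphism
$$H_{\mathfrak{a}}({\mathcal{F}}{\vert}_U)\cong H_{\mathfrak{a}}({\mathcal{F}}{\vert}_J)$$
of $\sym(\mathfrak{a}^*)^{\mathfrak{a}}$-algebras. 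As $\mathfrak{a}$ is Abelian, $\sym(\mathfrak{a}^*)^{\mathfrak{a}}=\sym(\mathfrak{a}^*)$, and the isomorphism above is one of $\sym(\mathfrak{a}^*)$-algebras, as claimed.
\end{proof}
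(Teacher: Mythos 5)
Your proof is correct and follows exactly the paper's approach: the paper simply states that the result is obtained by combining Corollary \ref{corollary: homothetic retractions are a-equivariant} with Corollary \ref{corollary: invariance of ebc under homotopies and deformation retracts}, which is precisely the chain of deductions you have written out, with the additional detail about $\mathfrak{a}$-invariance of $J$ and $U$ and the identification $\sym(\mathfrak{a}^*)^{\mathfrak{a}}=\sym(\mathfrak{a}^*)$ correctly filled in.
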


Let us now establish that $H_{\mathfrak{a}}({\mathcal{F}})$ can be calculated via Mayer--Vietoris sequences. We will adapt the arguments from \cite{goertsches} for the regular case.

\begin{lemma}\label{lemma: basic forms are invariant}
Let ${\mathcal{F}}$ be a singular Killing foliation of $M$ with structural algebra $\mathfrak{a}$ and let $U\subset M$ be an open saturated subset. Then $\Omega({\mathcal{F}}{\vert}_U)^\mathfrak{a}=\Omega({\mathcal{F}}{\vert}_U)$, and hence $\Omega_\mathfrak{a}({\mathcal{F}}{\vert}_U)=\sym(\mathfrak{a}^*)\otimes\Omega({\mathcal{F}}|_U)$.
\end{lemma}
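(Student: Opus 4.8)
The plan is to reduce everything to the single assertion that $\mathcal{L}_X\omega=0$ for every $X\in\mathfrak{a}$ and every $\omega\in\Omega(\mathcal{F}|_U)$. Granting this, the identity $\Omega(\mathcal{F}|_U)^{\mathfrak{a}}=\Omega(\mathcal{F}|_U)$ is immediate, since $\mathfrak{a}$-invariance of $\omega\in\Omega(\mathcal{F}|_U)$ means precisely $\mathcal{L}_X\omega=0$ for all $X$. Moreover, because $\mathfrak{a}$ is Abelian its coadjoint action on $\sym(\mathfrak{a}^*)$ is trivial, so $\sum f_i\otimes\omega_i\in\sym(\mathfrak{a}^*)\otimes\Omega(\mathcal{F}|_U)$ is $\mathfrak{a}$-invariant if and only if $\sum f_i\otimes\mathcal{L}_X\omega_i=0$ for all $X$, a condition now automatically satisfied; hence $\Omega_{\mathfrak{a}}(\mathcal{F}|_U)=(\sym(\mathfrak{a}^*)\otimes\Omega(\mathcal{F}|_U))^{\mathfrak{a}}=\sym(\mathfrak{a}^*)\otimes\Omega(\mathcal{F}|_U)$.

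To prove $\mathcal{L}_X\omega=0$, I would first note that since $\mathscr{C}_{\mathcal{F}}$ is smooth (as $\mathcal{F}$ is a singular Killing foliation) the transverse field $\mu(X)$ has a smooth foliate representative $\tilde{X}$ on $U$, so $\mathcal{L}_X\omega=\mathcal{L}_{\tilde{X}}\omega$ is a genuine smooth $k$-form on $U$. As the regular locus $\Sigma_{\mathrm{reg}}$ is open and dense, it suffices to check that $\mathcal{L}_X\omega$ vanishes on $U\cap\Sigma_{\mathrm{reg}}$. Since $\Sigma_{\mathrm{reg}}$ is open, the Lie derivative commutes with restriction there, so $(\mathcal{L}_X\omega)|_{\Sigma_{\mathrm{reg}}}=\mathcal{L}_{X|_{\Sigma_{\mathrm{reg}}}}(\omega|_{\Sigma_{\mathrm{reg}}})$, where $X|_{\Sigma_{\mathrm{reg}}}$ is a local section of the Molino sheaf $\mathscr{C}_{\mathrm{reg}}$ of the regular Riemannian foliation $\mathcal{F}_{\mathrm{reg}}$ (whose holonomy pseudogroup is complete) and $\omega|_{\Sigma_{\mathrm{reg}}}$ is $\mathcal{F}_{\mathrm{reg}}$-basic. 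We are thus reduced to the regular statement: for a complete regular Riemannian foliation, a basic form is annihilated by $\mathcal{L}_Y$ for every local section $Y$ of the Molino sheaf.

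This last fact is the heart of the matter, and is essentially the content of the corresponding statement in \cite{goertsches}; for completeness I would recall the argument, which is local on a transversal $S$ with holonomy pseudogroup $\mathcal{H}$. A basic form descends to an $\mathcal{H}$-invariant form $\bar{\omega}$ on $S$, while a local section of the Molino sheaf descends to a vector field $\bar{Y}$ on $S$ whose local flow $\phi_t$ lies in the closure $\overline{\mathcal{H}}$ of the pseudogroup --- this is precisely how the Molino sheaf encodes the leaf closures. Since $\mathcal{H}$ is dense in $\overline{\mathcal{H}}$ in the $C^\infty$ (or even $C^1$) topology on compact sets, and $\bar{\omega}$ is $\mathcal{H}$-invariant, a continuity argument gives $\phi_t^*\bar{\omega}=\bar{\omega}$; differentiating at $t=0$ yields $\mathcal{L}_{\bar{Y}}\bar{\omega}=0$, hence $\mathcal{L}_Y\omega|_S=0$, since a basic form whose transverse projection vanishes is itself zero. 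Patching over a cover of $\Sigma_{\mathrm{reg}}$ by such transversal charts gives $(\mathcal{L}_X\omega)|_{U\cap\Sigma_{\mathrm{reg}}}=0$, and density of $\Sigma_{\mathrm{reg}}$ finishes the proof.

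The main obstacle is this last step: making precise that the flow of a Molino-sheaf section belongs to $\overline{\mathcal{H}}$ and that $\mathcal{H}$-invariance propagates to $\overline{\mathcal{H}}$-invariance, which is exactly where Molino's structure theory enters. A minor point of care is that $\Sigma_{\mathrm{reg}}$ need not be connected, so the reduction and the transversal argument should be carried out on each connected component separately; the sections of $\mathscr{C}_{\mathcal{F}}$ restrict consistently to each component, so no genuine new difficulty arises. Everything else --- the Abelian reduction, smoothness of $\tilde{X}$, the locality of the Lie derivative, and the final density argument --- is routine.
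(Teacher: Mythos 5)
Your proof takes essentially the same approach as the paper: reduce to the vanishing of $\mathcal{L}_X\omega$ on the dense open regular locus $\Sigma_{\mathrm{reg}}$, where the transverse $\mathfrak{a}$-action is the structural-algebra action of the regular Riemannian foliation ${\mathcal{F}}_{\mathrm{reg}}$, and invoke the regular-case statement from Goertsches--Töben (together with the observation that $\mathfrak{a}$ Abelian makes the coadjoint action trivial). The paper stops with the citation, whereas you additionally sketch the regular-case argument via descent to a local transversal, $\mathcal{H}$-invariance, and density of $\mathcal{H}$ in $\overline{\mathcal{H}}$ in the $C^1$ topology; the sketch is correct and consistent with the cited result.
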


\begin{proof}
Since the regular locus $\Sigma_\mathrm{reg}$ of ${\mathcal{F}}$ is open and dense in $M$, it is sufficient to show that $\Omega({\mathcal{F}}{\vert}_{U\cap\Sigma_\mathrm{reg}})^\mathfrak{a}=\Omega({\mathcal{F}}{\vert}_{U\cap\Sigma_\mathrm{reg}})$. On $\Sigma_\mathrm{reg}$, the transverse $\mathfrak{a}$-action is nothing but the natural transverse action of the structural algebra of the regular foliation ${\mathcal{F}}_{\mathrm{reg}}$, so the result follows from \cite[Lemma 3.15]{goertsches}.
\end{proof}

The proof of \cite[Proposition 3.16]{goertsches} now adapts directly to the singular setting: for saturated open sets $U,V\subset M$,
$$0\longrightarrow \Omega({\mathcal{F}}{\vert}_{U\cup V})\stackrel{i_U^*\oplus i_V^*}{\longrightarrow}\Omega({\mathcal{F}}{\vert}_U)\oplus \Omega({\mathcal{F}}{\vert}_V)\stackrel{j_U^*-j_V^*}{\longrightarrow} \Omega({\mathcal{F}}{\vert}_{U\cap V})\longrightarrow 0$$
is a short exact sequence of $\mathfrak{a}^\star$-modules. Since tensoring with $\sym(\mathfrak{a}^*)$ preserves exactness, we obtain, by Lemma \ref{lemma: basic forms are invariant}, the short exact sequence
$$0\longrightarrow \Omega_\mathfrak{a}({\mathcal{F}}{\vert}_{U\cup V})\longrightarrow \Omega_\mathfrak{a}({\mathcal{F}}{\vert}_U)\oplus \Omega_\mathfrak{a}({\mathcal{F}}{\vert}_V)\longrightarrow \Omega_\mathfrak{a}({\mathcal{F}}{\vert}_{U\cap V})\longrightarrow 0$$
that induces an exact Mayer--Vietoris sequence in equivariant basic cohomology. Let us state this below.

\begin{proposition}[Mayer–Vietoris sequence for equivariant basic cohomology]
Let ${\mathcal{F}}$ be a singular Killing foliation of $M$ with structural algebra $\mathfrak{a}$ and $U,V\subset M$ saturated open sets. Then the sequence of $\sym(\mathfrak{a}^*)$-modules
$$\cdots\stackrel{\beta}{\longrightarrow}H_\mathfrak{a}({\mathcal{F}}{\vert}_{U\cup V})\stackrel{i_U^*\oplus i_V^*}{\longrightarrow}H_\mathfrak{a}({\mathcal{F}}{\vert}_U)\oplus H_\mathfrak{a}({\mathcal{F}}{\vert}_V)\stackrel{j_U^*-j_V^*}{\longrightarrow} H_\mathfrak{a}({\mathcal{F}}{\vert}_{U\cap V})\stackrel{\beta}{\longrightarrow}\cdots$$
is exact.
\end{proposition}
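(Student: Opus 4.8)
The plan is to upgrade the short exact sequence of basic de Rham complexes recalled in the Preliminaries to a short exact sequence of basic Cartan complexes, and then apply the zigzag lemma. First I would observe that $U$, $V$, $U\cup V$ and $U\cap V$, being $\mathcal{F}$-saturated, are $\overline{\mathcal{F}}$-saturated and hence $\mathfrak{a}$-invariant, so the inclusions $i_U$, $i_V$, $j_U$, $j_V$ are foliate and $\mathfrak{a}$-equivariant. By the discussion preceding Proposition~\ref{proposition: Homotopy invariance of equivariant basic cohomology}, pullback along an $\mathfrak{a}$-equivariant foliate map is a degree-$0$ morphism of $\mathfrak{a}^\star$-algebras, i.e. it commutes, in the graded sense, with $d$, $\mathcal{L}_X$ and $\iota_X$ for all $X\in\mathfrak{a}$. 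Consequently the sequence
$$0\longrightarrow \Omega({\mathcal{F}}{\vert}_{U\cup V})\stackrel{i_U^*\oplus i_V^*}{\longrightarrow}\Omega({\mathcal{F}}{\vert}_U)\oplus \Omega({\mathcal{F}}{\vert}_V)\stackrel{j_U^*-j_V^*}{\longrightarrow} \Omega({\mathcal{F}}{\vert}_{U\cap V})\longrightarrow 0$$
is a short exact sequence of $\mathfrak{a}^\star$-modules. Exactness is verified exactly as for the basic complex; the only non-formal point is the surjectivity of $j_U^*-j_V^*$, which comes from a basic partition of unity subordinate to $\{U,V\}$ (such partitions exist for saturated open covers of a singular Riemannian foliation).

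Next I would tensor this sequence over $\mathbb{R}$ with the polynomial algebra $\sym(\mathfrak{a}^*)$, which is a free, hence flat, $\mathbb{R}$-module, so exactness is preserved. By Lemma~\ref{lemma: basic forms are invariant}, for every saturated open $W\subset M$ one has $\Omega_\mathfrak{a}({\mathcal{F}}{\vert}_W)=\sym(\mathfrak{a}^*)\otimes\Omega({\mathcal{F}}{\vert}_W)$ (recall that $\mathfrak{a}$ is Abelian, so the coadjoint action is trivial and imposing $\mathfrak{a}$-invariance costs nothing), so the tensored sequence is precisely the sequence of basic Cartan complexes
$$0\longrightarrow \Omega_\mathfrak{a}({\mathcal{F}}{\vert}_{U\cup V})\longrightarrow \Omega_\mathfrak{a}({\mathcal{F}}{\vert}_U)\oplus \Omega_\mathfrak{a}({\mathcal{F}}{\vert}_V)\longrightarrow \Omega_\mathfrak{a}({\mathcal{F}}{\vert}_{U\cap V})\longrightarrow 0.$$
Its maps are of the form $\mathrm{id}\otimes(\text{pullback})$, hence $\sym(\mathfrak{a}^*)$-linear, respect the grading \eqref{grading of the Cartan complex}, and commute with the equivariant differential, since $(d_\mathfrak{a}\omega)(X)=d(\omega(X))-\iota_X(\omega(X))$ and each pullback commutes with both $d$ and $\iota_X$.

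Finally, the zigzag lemma applied to this short exact sequence of cochain complexes of $\sym(\mathfrak{a}^*)$-modules yields the long exact sequence
$$\cdots\stackrel{\beta}{\longrightarrow}H_\mathfrak{a}({\mathcal{F}}{\vert}_{U\cup V})\stackrel{i_U^*\oplus i_V^*}{\longrightarrow}H_\mathfrak{a}({\mathcal{F}}{\vert}_U)\oplus H_\mathfrak{a}({\mathcal{F}}{\vert}_V)\stackrel{j_U^*-j_V^*}{\longrightarrow} H_\mathfrak{a}({\mathcal{F}}{\vert}_{U\cap V})\stackrel{\beta}{\longrightarrow}\cdots,$$
and the connecting homomorphism $\beta$ is automatically $\sym(\mathfrak{a}^*)$-linear by naturality, since the maps in the short exact sequence are $\sym(\mathfrak{a}^*)$-linear. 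This is the asserted sequence. Apart from the formal homological algebra, the step I expect to require the most care is the existence of the basic partition of unity underlying the surjectivity in the input short exact sequence of basic complexes.
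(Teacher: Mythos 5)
Your proposal is correct and follows essentially the same route as the paper: observe that the short exact sequence of basic complexes is one of $\mathfrak{a}^\star$-modules, tensor with $\sym(\mathfrak{a}^*)$ (which preserves exactness), identify the result with the Cartan complexes via Lemma~\ref{lemma: basic forms are invariant}, and pass to the long exact sequence. The paper just defers the verification of the underlying short exact sequence to \cite[Proposition~3.16]{goertsches}, while you spell out the $\mathfrak{a}$-equivariance of the inclusions and the partition-of-unity argument for surjectivity.
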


\subsection{Borel type localization}

A remarkable feature of the equivariant cohomology of the action of a torus $T$ on a manifold $M$ is that, modulo torsion, $H_T(M)$ can be recovered from the fixed point set $M^T$. This property became know as Borel's localization, as he observed close relations between $H_T(M)$ and $H_T(M^T)$ (see \cite[Chapter XII]{borel}) which later led to the formalization of the localization theorem. Our objective in this section is to obtain a version of this result for equivariant basic cohomology of singular Killing foliations. The case of regular Killing foliations, with the canonical action by the structural algebra, was treated in \cite{goertsches}. It was later generalized for arbitrary transverse actions on (regular) Riemannian foliations in \cite{lin}.

This topic is better formulated with the notion of localization of modules, for which we refer to \cite[Chapter 3]{atiyah2}. In fact we will be interested in the localization $S^{-1}H_{\mathfrak{a}}({\mathcal{F}})$ of $H_{\mathfrak{a}}({\mathcal{F}})$ with respect to $S=\sym(\mathfrak{a}^*)\setminus\{0\}$. We view an element of $S^{-1}H_{\mathfrak{a}}({\mathcal{F}})$ as a fraction $[\omega]/f$, for $[\omega]\in H_{\mathfrak{a}}({\mathcal{F}})$ and $f\in S$. The usual fraction operations turn $S^{-1}H_{\mathfrak{a}}({\mathcal{F}})$ into a vector space over the field $\mathrm{Q}(\mathfrak{a}^*)$ of fractions of $\sym(\mathfrak{a}^*)$, whose dimension is the rank of $H_{\mathfrak{a}}({\mathcal{F}})$. Recall that $[\omega]\in H_{\mathfrak{a}}({\mathcal{F}})$ is a \textit{torsion element} if there exists $f\in S$ such that $f[\omega]=0$. The submodule $\tor(H_{\mathfrak{a}}({\mathcal{F}}))$ of torsion elements is the \textit{torsion submodule} of $H_{\mathfrak{a}}({\mathcal{F}})$. A module is a \textit{torsion module} when it is equal to its torsion submodule. In particular, $H_{\mathfrak{a}}({\mathcal{F}})$ is torsion if, and only if, $S^{-1}H_{\mathfrak{a}}({\mathcal{F}})=0$. The following fact will be useful.

\begin{proposition}\label{proposition: cohomology of non closed leaves is torsion}
Let ${\mathcal{F}}$ be a singular Killing foliation. Then $H_{\mathfrak{a}}({\mathcal{F}}{\vert}_{\overline{L}})=\sym(\mathfrak{a}_L^*)$ for every $L\in{\mathcal{F}}$. In particular, if $L$ is not closed, then $H_{\mathfrak{a}}({\mathcal{F}}{\vert}_{\overline{L}})$ is a torsion module.
\end{proposition}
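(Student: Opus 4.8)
The plan is to localize the computation to a single stratum and then reduce it to a Koszul-type argument. First, since leaf closures stay inside strata \cite[Lemma 6.4]{molino}, $\overline{L}$ lies in one stratum $\Sigma_\alpha$, so $\mathcal{F}|_{\overline{L}}$ is a \emph{regular} Riemannian foliation on $\overline{L}$ to which the transverse $\mathfrak{a}$-action restricts. Because $\mathscr{C}_{\mathcal{F}}$ is globally constant and its orbits describe $\overline{\mathcal{F}}$, we have $T_y\overline{L}=T_y\mathcal{F}(y)\oplus\{\mu(X)_y\mid X\in\mathfrak{a}\}$ for every $y\in\overline{L}$; moreover every leaf of $\mathcal{F}$ contained in $\overline{L}$ has closure $\overline{L}$, so $\overline{\mathcal{F}|_{\overline{L}}}$ is the foliation of $\overline{L}$ by the single leaf $\overline{L}$, and in particular $H(\overline{\mathcal{F}|_{\overline{L}}})=\mathbb{R}$. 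I would then note that the isotropy is constant along $\overline{L}$: for $x\in\overline{L}$ the leaf $\mathcal{F}(x)$ has closure $\overline{L}$, so $\mathfrak{a}_x=\mathfrak{a}_{\mathcal{F}(x)}=\mathfrak{a}_{\overline{\mathcal{F}(x)}}=\mathfrak{a}_{\overline{L}}=\mathfrak{a}_L$. Hence each $X\in\mathfrak{a}_L$ admits a foliate representative lying in $\mathfrak{X}(\mathcal{F}|_{\overline{L}})$, and therefore $\iota_X$ and $\mathcal{L}_X$ vanish on $\Omega(\mathcal{F}|_{\overline{L}})$.

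Since $\mathfrak{a}$ is abelian, pick a complement $\mathfrak{b}$ with $\mathfrak{a}=\mathfrak{a}_L\oplus\mathfrak{b}$. By Lemma \ref{lemma: basic forms are invariant} we have $C_{\mathfrak{a}}(\mathcal{F}|_{\overline{L}})=\sym(\mathfrak{a}_L^*)\otimes\bigl(\sym(\mathfrak{b}^*)\otimes\Omega(\mathcal{F}|_{\overline{L}})\bigr)$, and since $d_{\mathfrak{a}}$ only involves the exterior derivative and the contractions $\iota_X$ with $X\in\mathfrak{b}$, it equals $\mathrm{id}\otimes d_{\mathfrak{b}}$. Therefore
$$H_{\mathfrak{a}}(\mathcal{F}|_{\overline{L}})\cong\sym(\mathfrak{a}_L^*)\otimes H_{\mathfrak{b}}(\mathcal{F}|_{\overline{L}})$$
as $\sym(\mathfrak{a}^*)$-algebras, and it remains to show that $H_{\mathfrak{b}}(\mathcal{F}|_{\overline{L}})=\mathbb{R}$, concentrated in degree $0$. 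This is the transverse analogue of the fact that the equivariant cohomology of a free transitive torus action is that of a point.

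For this last step, observe that the $\mathfrak{b}$-action on $\mathcal{F}|_{\overline{L}}$ is transversely transitive with trivial isotropy, so the transverse fields $\mu(X_1),\dots,\mu(X_m)$ associated to a basis of $\mathfrak{b}$ form a global smooth frame of the normal bundle of $\mathcal{F}|_{\overline{L}}$; let $\theta^1,\dots,\theta^m\in\Omega^1(\mathcal{F}|_{\overline{L}})$ be the dual basic coframe, so $\iota_{X_i}\theta^j=\delta_i^j$. Transverse transitivity forces $\Omega(\mathcal{F}|_{\overline{L}})=\Lambda^\bullet\langle\theta^1,\dots,\theta^m\rangle$: a basic function is constant on the dense leaves, hence constant, and every basic form is a constant-coefficient combination of the $\theta^I$. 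Abelianness of $\mathfrak{b}$ gives $d\theta^i=0$, so $d\equiv0$ on $\Omega(\mathcal{F}|_{\overline{L}})$, and consequently $C_{\mathfrak{b}}(\mathcal{F}|_{\overline{L}})=\sym(\mathfrak{b}^*)\otimes\Lambda^\bullet(\mathfrak{b}^*)$ with differential $d_{\mathfrak{b}}$ is precisely the Koszul complex of $\sym(\mathfrak{b}^*)$ on its linear generators, which resolves $\mathbb{R}$; hence $H_{\mathfrak{b}}(\mathcal{F}|_{\overline{L}})=\mathbb{R}$. Equivalently, one may invoke the Guillemin--Sternberg theorem on locally free $\mathfrak{g}^\star$-algebras with the flat connection $\sum_i\theta^i\otimes X_i$ (cf.\ \cite{guillemin}).

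Combining the above, $H_{\mathfrak{a}}(\mathcal{F}|_{\overline{L}})=\sym(\mathfrak{a}_L^*)$, with the $\sym(\mathfrak{a}^*)$-structure induced by the restriction map $\sym(\mathfrak{a}^*)\to\sym(\mathfrak{a}_L^*)$ dual to $\mathfrak{a}_L\hookrightarrow\mathfrak{a}$. Finally, if $L$ is not closed then $\overline{L}\neq L$, which forces $\mathfrak{a}_L\subsetneq\mathfrak{a}$; choosing any nonzero $f$ in the annihilator of $\mathfrak{a}_L$ inside $\mathfrak{a}^*\subset S$, its image in $\sym(\mathfrak{a}_L^*)$ vanishes, so $f$ annihilates $H_{\mathfrak{a}}(\mathcal{F}|_{\overline{L}})$ and the module is torsion. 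I expect the main obstacle to be the identification $\Omega(\mathcal{F}|_{\overline{L}})=\Lambda^\bullet\langle\theta^1,\dots,\theta^m\rangle$ — that transverse transitivity really leaves no basic forms beyond the exterior algebra on the transverse coframe — together with checking that this coframe is globally defined, smooth, basic and closed; the rest is bookkeeping in the Cartan model.
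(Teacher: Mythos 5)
Your proof is correct, and it reduces to the same first step as the paper: observing that ${\mathcal{F}}{\vert}_{\overline{L}}$ is a regular Killing foliation of the closed manifold $\overline{L}$, so that the regular theory applies. The difference is that the paper then simply cites \cite[Corollary 3.21]{goertsches} to conclude $H_{\mathfrak{a}}({\mathcal{F}}{\vert}_{\overline{L}})=\sym(\mathfrak{a}_L^*)$, whereas you unfold that corollary into a self-contained computation: constancy of the isotropy $\mathfrak{a}_x=\mathfrak{a}_L$ on $\overline{L}$, the splitting $\mathfrak{a}=\mathfrak{a}_L\oplus\mathfrak{b}$ (which uses abelianness), the identification $\Omega({\mathcal{F}}{\vert}_{\overline{L}})=\Lambda^\bullet\langle\theta^1,\dots,\theta^m\rangle$ with $d=0$, and the resulting Koszul complex for $\mathfrak{b}$. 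The key observations — that $\iota_X=\mathcal{L}_X=0$ for $X\in\mathfrak{a}_L$, that basic functions are constant since the leaves are dense, and that abelianness forces $d\theta^i=0$ — are all correct and are in essence how the cited corollary is proved in \cite{goertsches}, so this buys you a self-contained argument at the cost of length; what it does not buy is anything essentially new, since the underlying mechanism (a transversely transitive abelian action with constant isotropy has equivariant cohomology $\sym$ of the isotropy dual) is the same one packaged in the citation. The torsion statement at the end is argued identically to the paper.
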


\begin{proof}
The restriction ${\mathcal{F}}{\vert}_{\overline{L}}$ is a (regular) Killing foliation of the manifold $\overline{L}$, hence it follows from \cite[Corollary 3.21]{goertsches} that $H_{\mathfrak{a}}({\mathcal{F}}{\vert}_{\overline{L}})=\sym(\mathfrak{a}_L^*)$, which is torsion when $L$ is not closed: if $f\in\sym(\mathfrak{a}^*)$ is any non-zero form that vanishes on $\mathfrak{a}_L^*$, then multiplication by $f$ is the zero map on $\sym(\mathfrak{a}_L^*)$.
\end{proof}

The regular version of Borel's localization theorem states that $S^{-1}H_{\mathfrak{a}}({\mathcal{F}})\cong S^{-1}H_{\mathfrak{a}}({\mathcal{F}}{\vert}_{M^\mathfrak{a}})$. In the singular setting there is a particular phenomenon that, in principle, prevents us from obtaining the same result for all Killing foliations: unlike the regular case, for some singular Killing foliations the closed locus $M^\mathfrak{a}$ is singular, that is, its connected components may not be submanifolds of $M$.

\begin{example}\label{example: propagation of non minimal closed leaves}
Fix $\lambda\in\mathbb{R}\setminus\{0\}$ and consider the isometric $\mathbb{R}$-action on $\mathbb{C}\times\mathbb{C}$ given by
$$t(z_1,z_2)=(e^{2\pi\mathrm{i}t}z_1,e^{2\pi\mathrm{i}\lambda t}z_2).$$
Denote by $H<\mathrm{SO}(4)$ the corresponding one-parameter subgroup. As we saw in Example \ref{example: homogeneous Riemannian foliations are Killing}, the homogeneous foliation ${\mathcal{F}}$ given by its orbits is Killing. Notice that its only singular leaf is the origin $\{0\}$. Since ${\mathcal{F}}$ is invariant by homotheties with respect to it, it is completely determined by its restriction to $\mathbb{S}^3\subset \mathbb{C}\times\mathbb{C}$, which is a regular Killing foliation with one dimensional leaves known as a \textit{generalized Hopf fibration}. We are interested in the case of an irrational $\lambda$, but it is instructive to first analyze rational cases in order to get a better grasp. If $\lambda=1$ one gets the usual Hopf fibration on $\mathbb{S}^3$, whereas if $\lambda=p/q\in\mathbb{Q}$, one gets its orbifold version, whose quotient is the weighted projective space $\mathbb{CP}(p,q)$ (also known as the $(p,q)$-football orbifold): a sphere with cone singularities of order $p$ and $q$ at the poles, corresponding to the exceptional orbits $\mathbb{R}(1,0)$ and $\mathbb{R}(0,1)$. In either case ${\mathcal{F}}$ is closed, so let us now assume that $\lambda$ is irrational. The exceptional orbits $\mathbb{R}(1,0)$ and $\mathbb{R}(0,1)$ remain closed (two linked circles), but now have infinite holonomy. As we saw in Example \ref{exe: molino sheaf of homogeneous singular foliation}, the closure $\overline{{\mathcal{F}}}$ is given by the orbits of $\overline{H}<\mathrm{SO}(4)$, which is a maximal torus. A regular $\overline{H}$-orbit is a torus in $\mathbb{S}^3$, on which ${\mathcal{F}}$ restricts to an irrational Kronecker foliation. By the homothetic invariance of ${\mathcal{F}}$, the closed locus $M^\mathfrak{a}$ is a double cone with vertex $\{0\}$ and with each nappe ``homothetically spanned'' by an exceptional orbit.
\end{example}

More generally, this phenomenon happens whenever there is a closed $l$-dimensional leaf $L$ near a stratum $\Sigma^k_\alpha$, with $k<l$: since $\mathscr{C}_{\mathcal{F}}$ is invariant by homothetic transformations (Proposition \ref{proposition: invariance of molino sheaf under homothetic transformations}), $L$ ``propagates'' towards $\Sigma^k_\alpha$, leading to singularities in $M^\mathfrak{a}$. This observation motivates the following definition.

\begin{definition}\label{definition: neat skf}
A singular Killing foliation $(M,{\mathcal{F}})$ is \textit{neat} when each connected component $C$ of $M^\mathfrak{a}$ is contained in a single stratum of ${\mathcal{F}}$.
\end{definition}

Notice that if ${\mathcal{F}}$ is neat then a connected component $C\subset M^{\mathfrak{a}}$ is a stratum of $\overline{{\mathcal{F}}}$. Also, if $\Sigma_\alpha$ is the ${\mathcal{F}}$-stratum containing $C$, then $C$ is a minimal stratum of $\overline{{\mathcal{F}}}{\vert}_{\Sigma_\alpha}$. It follows that each $C\subset M^\mathfrak{a}$ is a closed, saturated, horizontally totally geodesic submanifold of $M$. An important particular case is that of a singular Killing foliation ${\mathcal{F}}$ with $M^{\mathfrak{a}}\subset \Sigma_{\mathrm{min}}$, in which case $M^{\mathfrak{a}}$, if not empty, is precisely the minimal locus of $\overline{{\mathcal{F}}}$. Another subclass is that of singular Killing foliations with isolated closed leaves. Notice also that, of course, if $M^{\mathfrak{a}}=\emptyset$ or if ${\mathcal{F}}$ is regular, then ${\mathcal{F}}$ is automatically neat.

Let us now prove Borel's localization property for the equivariant basic cohomology of neat singular Killing foliations. We denote ${\mathcal{F}}^\mathfrak{a}={\mathcal{F}}{\vert}_{M^{\mathfrak{a}}}$.

\begin{theorem}[Borel localization for equivariant basic cohomology]\label{theorem: borel localization}
Let $(M,{\mathcal{F}})$ be a transversely compact, neat singular Killing foliation with structural algebra $\mathfrak{a}$, and let $i:M^{\mathfrak{a}}\to M$ be the natural inclusion. Then $\ker i^*=\tor(H_{\mathfrak{a}}({\mathcal{F}}))$ and $\coker i^*$ is a torsion module. Hence, the localized map
$$S^{-1}i^*:S^{-1}H_{\mathfrak{a}}({\mathcal{F}})\longrightarrow S^{-1}H_{\mathfrak{a}}({\mathcal{F}}^\mathfrak{a})$$
is an isomorphism, where $S=\sym(\mathfrak{a}^*)\setminus\{0\}$.
\end{theorem}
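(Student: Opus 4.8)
The plan is to reduce the statement to the building blocks provided by leaf closures and to assemble it by a Mayer--Vietoris induction over a finite equivariant good cover. Since $\mathcal F$ and $\overline{\mathcal F}$ have the same saturated open sets, I would first replace $\mathcal F$ by $\overline{\mathcal F}$ and fix the finite good cover $\mathcal U$ of Proposition~\ref{proposition: good cover}; by Corollary~\ref{corollary: homothetic retractions are a-equivariant} and the remark following it, every nonempty intersection $U$ of elements of $\mathcal U$ admits a foliate, $\mathfrak a$-equivariant homothetic deformation retraction onto a leaf closure $\overline{L_U}\subset U$. The theorem will then follow from the assertion, proved by induction on the number $m$ of sets: \emph{if $N\subseteq M$ is saturated open and admits a finite cover by saturated open sets all of whose nonempty intersections $\mathfrak a$-equivariantly deformation retract onto leaf closures, then $i_N^*:H_{\mathfrak a}(\mathcal F|_N)\to H_{\mathfrak a}(\mathcal F|_{N^{\mathfrak a}})$ has torsion kernel and cokernel}, where $N^{\mathfrak a}=N\cap M^{\mathfrak a}$. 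Taking $N=M$ gives $\ker i^*\subseteq\tor(H_{\mathfrak a}(\mathcal F))$ and $\coker i^*$ torsion, and the refinement $\ker i^*=\tor(H_{\mathfrak a}(\mathcal F))$ together with the localized isomorphism will be extracted at the very end.

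For the base case $m=1$ the crucial point is that the closed locus is rigid under homotheties. A section of $\mathscr C_{\mathcal F}$ has an $h_\lambda$-invariant representative (Proposition~\ref{proposition: invariance of molino sheaf under homothetic transformations}), so each $\Zero(\mu(X))$, and hence $M^{\mathfrak a}=\bigcap_{X\in\mathfrak a}\Zero(\mu(X))=\Sigma_{\mathrm{cl}}$, is invariant under every homothetic transformation, and $M^{\mathfrak a}$ is closed. Now let $\overline L$ be a leaf closure. If $L$ is closed, then $\overline L=L\subseteq M^{\mathfrak a}$, and for $U$ that is $\overline L$-star-shaped the homothetic retraction of $U$ onto $L$ preserves $M^{\mathfrak a}$, hence restricts to an $\mathfrak a$-equivariant deformation retraction of $U^{\mathfrak a}$ onto $L$; via Corollary~\ref{corollary: invariance of ebc under homotopies and deformation retracts} and Proposition~\ref{proposition: poincare lema for equivariant basic cohomology} the map $i_U^*$ is then identified with the identity of $H_{\mathfrak a}(\mathcal F|_L)$. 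If $L$ is not closed, then $\overline L$, being a leaf of $\overline{\mathcal F}$ in which $L$ is dense, contains no closed leaf, so $\overline L\cap M^{\mathfrak a}=\emptyset$; letting $\lambda\to0$ in $h_\lambda$ and using that $M^{\mathfrak a}$ is closed and homothety-invariant then shows that in fact $\mathrm{Tub}_r(\overline L)\cap M^{\mathfrak a}=\emptyset$, so $U^{\mathfrak a}=\emptyset$, while $H_{\mathfrak a}(\mathcal F|_U)\cong H_{\mathfrak a}(\mathcal F|_{\overline L})=\sym(\mathfrak a_L^*)$ is torsion by Proposition~\ref{proposition: cohomology of non closed leaves is torsion}. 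In either case $\ker i_U^*$ and $\coker i_U^*$ are torsion.

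The induction step is the standard Mayer--Vietoris/five-lemma argument. Writing the cover of $N$ as $\{W_1,\dots,W_m\}$, set $A=W_1\cup\dots\cup W_{m-1}$, $B=W_m$, and note that $A\cap B$ is covered by the $m-1$ sets $W_i\cap W_m$, whose higher intersections are still intersections of the $W_i$; thus $A$, $B$ and $A\cap B$ satisfy the hypothesis with fewer sets. Applying the exact localization functor $S^{-1}$ to the ladder comparing the equivariant Mayer--Vietoris sequence of $\mathcal F$ on $N=A\cup B$ with that of $\mathcal F^{\mathfrak a}$ on $N^{\mathfrak a}=A^{\mathfrak a}\cup B^{\mathfrak a}$ (a legitimate such sequence since, by neatness, $N^{\mathfrak a}$ is a disjoint union of open subsets of strata, on each of which $\mathcal F^{\mathfrak a}$ is a regular Killing foliation), the inductive hypothesis makes $S^{-1}i_A^*$, $S^{-1}i_B^*$ and $S^{-1}i_{A\cap B}^*$ isomorphisms, so the five lemma forces $S^{-1}i_N^*$ to be an isomorphism; exactness of $S^{-1}$ then gives $S^{-1}\ker i_N^*=0=S^{-1}\coker i_N^*$, i.e.\ both are torsion. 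Finally, with $N=M$: the action of $\mathfrak a$ on $M^{\mathfrak a}$ is trivial, so by Example~\ref{example: equivariant cohomology of trivial action} the target $H_{\mathfrak a}(\mathcal F^{\mathfrak a})\cong\sym(\mathfrak a^*)\otimes H(\mathcal F^{\mathfrak a})$ is a free, hence torsion-free, $\sym(\mathfrak a^*)$-module; therefore $f[\omega]=0$ forces $i^*[\omega]=0$, which upgrades $\ker i^*\subseteq\tor(H_{\mathfrak a}(\mathcal F))$ to an equality, and $S^{-1}i^*$ is the desired isomorphism.

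The step I expect to be the main obstacle is the base case, and within it the rigidity of $M^{\mathfrak a}$ under the homothetic transformations of $\mathcal F$: establishing that each tube $\mathrm{Tub}_r(\overline L)$ either lies entirely over $M^{\mathfrak a}$ (when $L$ is closed) or is disjoint from it (when $L$ is not), which is precisely what makes the good cover simultaneously a good cover ``downstairs'' for $\mathcal F^{\mathfrak a}$. This combines Proposition~\ref{proposition: invariance of molino sheaf under homothetic transformations} with the closedness of $\Sigma_{\mathrm{cl}}$ and the density of each leaf in its closure, and it is here that neatness is essential, guaranteeing that $\mathcal F^{\mathfrak a}$ is a foliation of an honest manifold so that $H_{\mathfrak a}(\mathcal F^{\mathfrak a})$ and its Mayer--Vietoris sequence are available. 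The remaining homological bookkeeping is routine.
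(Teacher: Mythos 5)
Your proof is correct and reaches the same conclusion, but it follows a genuinely different Mayer--Vietoris strategy than the paper's. The paper partitions the finite equivariant good cover into two pieces: $U$, the union of elements meeting $M^{\mathfrak a}$, and $V$, the union of the rest. It then proves $H_{\mathfrak a}(\mathcal F|_U)\cong H_{\mathfrak a}(\mathcal F^{\mathfrak a})$ via a homothetic retraction, establishes (by a separate sub-induction over good covers contained in $M\setminus M^{\mathfrak a}$, using Proposition~\ref{proposition: cohomology of non closed leaves is torsion}) that $H_{\mathfrak a}(\mathcal F|_V)$ and $H_{\mathfrak a}(\mathcal F|_{U\cap V})$ are torsion, and extracts the $\ker$/$\coker$ statement directly from the single Mayer--Vietoris sequence of $\{U,V\}$ (including a clean identification of $\coker(i^*\oplus i_V^*)$ with a submodule of $H_{\mathfrak a}(\mathcal F|_{U\cap V})$). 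You instead run one induction on cover size, comparing the Mayer--Vietoris sequences of $\mathcal F$ on $N$ and of $\mathcal F^{\mathfrak a}$ on $N^{\mathfrak a}$ term-by-term, localizing, and applying the five lemma. What your version buys is a single, uniform induction and a base-case dichotomy that isolates exactly where neatness enters (making $U^{\mathfrak a}$ an honest manifold and making the ``downstairs'' Mayer--Vietoris sequence available). What the paper's version buys is that it never needs a Mayer--Vietoris sequence for $\mathcal F^{\mathfrak a}$ at all, since the retraction $U\to M^{\mathfrak a}$ handles the fixed locus globally; it also yields the module-theoretic identifications of $\ker i^*$ and $\coker i^*$ more explicitly.

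One small imprecision worth flagging: you assert ``$M^{\mathfrak a}$ is closed'' and then invoke a limiting argument $\lambda\to 0$ to conclude $\mathrm{Tub}_r(\overline L)\cap M^{\mathfrak a}=\emptyset$ when $L$ is not closed. Closedness of $M^{\mathfrak a}$ is not established in the paper and is not obvious from the definition, since $T\mathcal F$ is not a closed subset of $TM$ for a singular foliation. Fortunately, you do not actually need it: Proposition~\ref{proposition: invariance of molino sheaf under homothetic transformations} gives $\rho_{\overline L}$-invariance (the $\lambda=0$ case) of the sections of $\mathscr C_{\mathcal F}$ directly, so from $y\in M^{\mathfrak a}\cap\mathrm{Tub}_r(\overline L)$ and the foliateness of $\rho_{\overline L}$ one gets $\rho_{\overline L}(y)\in M^{\mathfrak a}\cap\overline L=\emptyset$ immediately, without any passage to the limit or appeal to closedness. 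With that phrasing adjusted, the base case is airtight, and the rest of your argument (the ladder/five-lemma induction and the final upgrade of $\ker i^*\subseteq\tor$ to equality using the freeness of $H_{\mathfrak a}(\mathcal F^{\mathfrak a})$) is correct and complete.
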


\begin{proof}
Let $M^{\mathfrak{a}}=C_1\sqcup\dots\sqcup C_k$ be the decomposition into connected components, which are finitely many since ${\mathcal{F}}$ is transversely compact, and choose pairwise disjoint distinguished tubular neighborhoods $\mathrm{Tub}_{r_j}(C_j)$. Take $\mathcal{U}=\{U_i\}$ a finite equivariant good cover for $(M,{\mathcal{F}})$ as in Proposition \ref{proposition: good cover} (see also Corollary \ref{corollary: homothetic retractions are a-equivariant}). Since each $C_j$ is a stratum of $\overline{{\mathcal{F}}}$, we have that if $U_i\cap C_j\neq \emptyset$, then $U_i$ is a tubular neighborhood around some leaf in $C_j$. Moreover we can suppose that in this case $U_i\subset \mathrm{Tub}_{r_j}(C_j)$, as we could have imposed this condition during the construction of $\mathcal{U}$. Define
$$U=\bigcup_{U_i\cap M^{\mathfrak{a}}\neq \emptyset} U_i.$$
Then the connected component of $U$ containing $C_j$ retracts by homotheties to it, so the inclusion $i{\vert}^U:M^\mathfrak{a}\to U$ induces an isomorphism $H_\mathfrak{a}({\mathcal{F}}{\vert}_U)\cong H_\mathfrak{a}({\mathcal{F}}^\mathfrak{a})$, by Proposition \ref{proposition: poincare lema for equivariant basic cohomology}. Define $V=\bigcup_{U_i\cap M^{\mathfrak{a}}= \emptyset} U_i$, so that $\{U,V\}$ is a cover of $M$ with the property that $\mathcal{U}$ restricts to finite equivariant good covers for both ${\mathcal{F}}{\vert}_V$ and ${\mathcal{F}}{\vert}_{U\cap V}$.

We can now follow the standard proofs of the classical Borel localization for torus actions (see, for instance, \cite[Theorem C.21]{guillemin3}, \cite[Theorem 11.4.4]{guillemin} and \cite[Theorem 8.1]{goertsches3}), which we include here for the sake of completeness. First, we claim that $H_{\mathfrak{a}}({\mathcal{F}}{\vert}_{V})$ and $H_{\mathfrak{a}}({\mathcal{F}}{\vert}_{U\cap V})$ are torsion modules. In fact, let us prove that if $W\subset M\setminus M^{\mathfrak{a}}$ is any saturated open set admitting a finite equivariant good cover $W=W_1\cup\dots\cup W_k$, then $H_{\mathfrak{a}}({\mathcal{F}}{\vert}_W)$ is torsion. If $k=1$ then $W$ retracts equivariantly to the closure of a non-closed leaf $L\subset V$, therefore $H_{\mathfrak{a}}({\mathcal{F}}{\vert}_{V})\cong H_{\mathfrak{a}}({\mathcal{F}}{\vert}_{\overline{L}})$ is torsion, by Proposition \ref{proposition: cohomology of non closed leaves is torsion} (and Corollary \ref{corollary: invariance of ebc under homotopies and deformation retracts}). For the induction step, define $W'=W_1\cup\dots\cup W_{k-1}$, so $W=W'\cup W_k$. Then, by the induction hypothesis, both extremes of the exact sequence
$$H_{\mathfrak{a}}({\mathcal{F}}{\vert}_{W'\cap W_k})\longrightarrow H_{\mathfrak{a}}({\mathcal{F}}{\vert}_W) \longrightarrow H_{\mathfrak{a}}({\mathcal{F}}{\vert}_W')\oplus H_{\mathfrak{a}}({\mathcal{F}}{\vert}_{W_k})$$
are torsion modules.

We have $i^*(\tor(H_\mathfrak{a}({\mathcal{F}})))\subset \tor(H_\mathfrak{a}({\mathcal{F}}^\mathfrak{a}))=0$, the last equality following from the fact that $H_{\mathfrak{a}}({\mathcal{F}}^\mathfrak{a})=\sym(\mathfrak{a}^*)\otimes H({\mathcal{F}}^\mathfrak{a})$ is free (see Example \ref{example: equivariant cohomology of trivial action}). Thus, $\tor(H_{\mathfrak{a}}({\mathcal{F}}))\subset \ker i^*$. Conversely, since $i=i_U\circ i{\vert}^U$, from the Mayer--Vietoris sequence for the cover $\{U,V\}$ we obtain the exact sequence
\begin{equation}\label{equation: M-V sequence for the cover UV}
\cdots\longrightarrow H_\mathfrak{a}({\mathcal{F}})\stackrel{i^*\oplus i_V^*}{\longrightarrow}H_\mathfrak{a}({\mathcal{F}}^\mathfrak{a})\oplus H_\mathfrak{a}({\mathcal{F}}{\vert}_V) \stackrel{j^*-j_V^*}{\longrightarrow} H_\mathfrak{a}({\mathcal{F}}{\vert}_{U\cap V})\longrightarrow\cdots.
\end{equation}
One has $\ker(i^*\oplus i_V^*)=\ker i^*\cap\ker i_V^*$, so the sequence
$$H_\mathfrak{a}({\mathcal{F}}{\vert}_{U\cap V})\longrightarrow \ker i^*\stackrel{i_V^*}{\longrightarrow} H_\mathfrak{a}({\mathcal{F}}{\vert}_V)$$
of $\sym(\mathfrak{a}^*)$-modules is also exact. As the extremes of this sequence are torsion modules, it follows that $\ker i^*\subset \tor(H_{\mathfrak{a}}({\mathcal{F}}))$.

The exactness of \eqref{equation: M-V sequence for the cover UV} also implies that
\begin{align*}\coker(i^*\oplus i_V^*)  = &\frac{H_\mathfrak{a}({\mathcal{F}}^\mathfrak{a})\oplus H_\mathfrak{a}({\mathcal{F}}{\vert}_V)}{\image(i^*\oplus i_V^*)}=\frac{H_\mathfrak{a}({\mathcal{F}}^\mathfrak{a})\oplus H_\mathfrak{a}({\mathcal{F}}{\vert}_V)}{\ker(j^*-j_V^*)}\\
\cong & \image(j^*-j_V^*)\subset H_\mathfrak{a}({\mathcal{F}}{\vert}_{U\cap V}),
\end{align*}
so $\coker(i^*\oplus i_V^*)$ is a torsion module. The projection $H_\mathfrak{a}({\mathcal{F}}^\mathfrak{a})\oplus H_\mathfrak{a}({\mathcal{F}}{\vert}_V)\to H_\mathfrak{a}({\mathcal{F}}^\mathfrak{a})$ induces a surjective map $\coker(i^*\oplus i_V^*)\to \coker i^*$, so $\coker i^*$ is torsion as well.

Finally, since localization preserves exactness \cite[Proposition 3.3]{atiyah2} and kills torsion, from the exact sequence
$$0 \longrightarrow \ker i^* \longrightarrow H_\mathfrak{a}({\mathcal{F}})\stackrel{i^*}{\longrightarrow}H_\mathfrak{a}({\mathcal{F}}^\mathfrak{a})\longrightarrow \coker i^* \longrightarrow 0$$
it follows that $S^{-1}i^*:S^{-1}H_{\mathfrak{a}}({\mathcal{F}})\to S^{-1}H_{\mathfrak{a}}({\mathcal{F}}^\mathfrak{a})$ is an isomorphism.
\end{proof}

The following can be seen as a transverse version of \cite[Corollary 5.4]{goertsches} (notice, although, that we cannot obtain a complete characterization of the presence of closed leaves in terms of $H_{\mathfrak{a}}({\mathcal{F}})$, because of the neatness hypothesis).

\begin{corollary}
Let $(M,{\mathcal{F}})$ be a transversely compact, singular Killing foliation.
\begin{enumerate}
\item If $H_{\mathfrak{a}}({\mathcal{F}})$ is a torsion module, then either $\f$ has no closed leaves or it is not neat. 
\item If $\f$ has no closed leaves, then $H_{\mathfrak{a}}({\mathcal{F}})$ is a torsion module.
\end{enumerate}
\end{corollary}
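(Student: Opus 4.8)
The plan is to deduce both statements directly from the Borel-type localization machinery, specifically Theorem~\ref{theorem: borel localization} together with Proposition~\ref{proposition: cohomology of non closed leaves is torsion} and the torsion arguments already assembled in its proof. First, for item~(2): suppose $\f$ has no closed leaves, so $M^{\mathfrak{a}}=\Sigma_{\mathrm{cl}}=\emptyset$. I would cover $M$ by a finite equivariant good cover $\mathcal{U}$ as in Proposition~\ref{proposition: good cover}, whose nonempty intersections retract equivariantly onto leaf closures $\overline{L}_U$, each of which is necessarily non-closed; hence by Proposition~\ref{proposition: cohomology of non closed leaves is torsion} (and Corollary~\ref{corollary: invariance of ebc under homotopies and deformation retracts}) the module $H_{\mathfrak{a}}(\f|_U)$ is torsion for each such $U$. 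An induction on the cardinality of the cover using the Mayer--Vietoris sequence for equivariant basic cohomology---exactly the induction carried out in the proof of Theorem~\ref{theorem: borel localization} for the set $W\subset M\setminus M^{\mathfrak{a}}$---then shows $H_{\mathfrak{a}}(\f)$ is torsion. (Alternatively, when $\f$ is additionally neat one can invoke Theorem~\ref{theorem: borel localization} directly, since $M^{\mathfrak{a}}=\emptyset$ forces $S^{-1}H_{\mathfrak{a}}(\f)\cong S^{-1}H_{\mathfrak{a}}(\f^{\mathfrak{a}})=0$; but the good-cover induction has the advantage of not needing neatness, which is why item~(2) is stated without it.)

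For item~(1), I would argue by contraposition: assume $\f$ is neat and has at least one closed leaf, and show $H_{\mathfrak{a}}(\f)$ is not torsion. Since $\f$ is neat, Theorem~\ref{theorem: borel localization} applies and gives an isomorphism $S^{-1}i^*:S^{-1}H_{\mathfrak{a}}(\f)\to S^{-1}H_{\mathfrak{a}}(\f^{\mathfrak{a}})$. It therefore suffices to see that $S^{-1}H_{\mathfrak{a}}(\f^{\mathfrak{a}})\neq 0$, equivalently that $H_{\mathfrak{a}}(\f^{\mathfrak{a}})$ is not torsion. But $M^{\mathfrak{a}}\neq\emptyset$, and on $M^{\mathfrak{a}}$ the transverse $\mathfrak{a}$-action is trivial by definition of the $\mathfrak{a}$-fixed locus, so by Example~\ref{example: equivariant cohomology of trivial action} we have $H_{\mathfrak{a}}(\f^{\mathfrak{a}})\cong\sym(\mathfrak{a}^*)\otimes H(\f^{\mathfrak{a}})$ as $\sym(\mathfrak{a}^*)$-modules, which is free and nonzero (it contains $\sym(\mathfrak{a}^*)\otimes 1$, since $H^0(\f^{\mathfrak{a}})\neq 0$ as $M^{\mathfrak{a}}$ has a nonempty connected component). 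A nonzero free module is not torsion, so $H_{\mathfrak{a}}(\f)$ is not torsion, completing the contrapositive: if $H_{\mathfrak{a}}(\f)$ is torsion then either $\f$ has no closed leaves or it is not neat.

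I do not anticipate a serious obstacle here, since essentially every ingredient has already been isolated in the text: the good cover and its equivariant retractions (Proposition~\ref{proposition: good cover}, Corollary~\ref{corollary: homothetic retractions are a-equivariant}), the Mayer--Vietoris sequence, the torsion of $H_{\mathfrak{a}}(\f|_{\overline{L}})$ for non-closed $L$, the freeness of $H_{\mathfrak{a}}$ of a trivial action, and the localization theorem itself. The only point requiring a little care is the reduction in item~(2): one should confirm that in a finite good cover of an $M$ with no closed leaves, \emph{every} leaf closure appearing as a retract $\overline{L}_U$ is genuinely non-closed, which is immediate since $\Sigma_{\mathrm{cl}}=\emptyset$. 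So the ``hard part'' is really just bookkeeping---making sure the induction on the number of good-cover elements is set up with the right inductive hypothesis (that $H_{\mathfrak{a}}(\f|_W)$ is torsion for all saturated open $W$ admitting a finite equivariant good cover all of whose members meet no closed leaf), which is precisely the statement already proved inside Theorem~\ref{theorem: borel localization}.
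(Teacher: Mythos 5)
Your argument is correct and follows essentially the same route as the paper: contraposition for item~(1) using the torsion-freeness of $H_{\mathfrak{a}}(\f^{\mathfrak{a}})\cong\sym(\mathfrak{a}^*)\otimes H(\f^{\mathfrak{a}})$, and automatic neatness plus Theorem~\ref{theorem: borel localization} for item~(2). The only cosmetic differences are that the paper's item~(1) argument is a touch lighter (it only needs that $i^*(1)=1\neq 0$ lands in a torsion-free module, not the full localized isomorphism of Theorem~\ref{theorem: borel localization}), and for item~(2) the paper simply uses your ``alternative'' directly rather than re-running the good-cover induction already carried out inside the proof of Theorem~\ref{theorem: borel localization}.
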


\begin{proof}
Suppose there is a closed leaf $L\in{\mathcal{F}}$ and $\f$ is neat. Then $i^*(1)=1\in H_{\mathfrak{a}}({\mathcal{F}}^\mathfrak{a})$ and, since $H_{\mathfrak{a}}({\mathcal{F}}^\mathfrak{a})\cong \sym(\mathfrak{a}^*)\otimes H({\mathcal{F}}^\mathfrak{a})$ is torsion-free, $1\in H_{\mathfrak{a}}({\mathcal{F}})$ must have no torsion.

For the second item, if there are no closed leaves, that is, $M^{\mathfrak{a}}=\emptyset$, then $H_{\mathfrak{a}}({\mathcal{F}}^\mathfrak{a})=0$. Moreover, in this case $\f$ is neat, so Theorem \ref{theorem: borel localization} applies, giving us that $H_{\mathfrak{a}}({\mathcal{F}})$ is torsion.
\end{proof}

\section{Applications of the localization theorem}

In this section we will see that there is a spectral sequence that converges to $H_{\mathfrak{a}}({\mathcal{F}})$. This can then be used in association with Theorem \ref{theorem: borel localization} to show that $\chi({\mathcal{F}})=\chi({\mathcal{F}}^\mathfrak{a})$, in analogy with the localization of the Euler characteristic of a manifold to the fixed point set of a torus action. In fact, all constructions will be direct adaptations of the classical case of torus actions on manifolds, as presented in \cite{goertsches3} (see also \cite[Chapter 6]{guillemin}), so we will not delve into too much details of the proofs. These results are also already established for the case or regular Killing foliations in \cite{goertsches}.

\subsection{A spectral sequence for equivariant basic cohomology}

Let ${\mathcal{F}}$ be a singular Killing foliation of $M$. Endow $C_{\mathfrak{a}}({\mathcal{F}})$ with the bigrading
$$C^{k,l}_{\mathfrak{a}}({\mathcal{F}})=\begin{cases}
(\sym^{k/2}(\mathfrak{a}^*)\otimes \Omega^l({\mathcal{F}}))^\mathfrak{a}, & \mbox{if } k \mbox{ is even},\\
0, & \mbox{if } k \mbox{ is odd},\end{cases}
$$
so that $C^n_{\mathfrak{a}}({\mathcal{F}})=\bigoplus_{k+l=n} C^{k,l}_{\mathfrak{a}}({\mathcal{F}})$ recovers the grading \eqref{grading of the Cartan complex}. The differential $d_\mathfrak{a}$ splits as $1\otimes d +\delta$, where $(\delta \omega)(X)=-\iota_X(\omega(X))$. The filtration
$$F^kC_{\mathfrak{a}}({\mathcal{F}})=\bigoplus_{\substack{j\geq k\\ l\geq 0}} C^{j,l}_{\mathfrak{a}}({\mathcal{F}}),$$
is canonically bounded, hence the associated spectral sequence $(E_r)$ is first quadrant and converges, that is,
$$E_\infty\cong H_\mathfrak{a}({\mathcal{F}})$$
as vector spaces. By Lemma \ref{lemma: basic forms are invariant} we have $\Omega_\mathfrak{a}({\mathcal{F}})=\sym(\mathfrak{a}^*)\otimes\Omega({\mathcal{F}})$, which leads to
$$E_1\cong \sym(\mathfrak{a}^*)\otimes H({\mathcal{F}}).$$

Each page $E_r=H(E_{r-1},d_{r-1})$ inherits a $\sym(\mathfrak{a}^*)$-module structure recursively from $E_{r-1}$. On $E_1\cong \sym(\mathfrak{a}^*)\otimes H({\mathcal{F}})$ it is given by the usual product on the first factor. If ${\mathcal{F}}$ is transversely compact, by Theorem \ref{theorem: basic cohomology is finite dimensional} we have $\dim H({\mathcal{F}})<\infty$, hence $E_1$ is finitely generated as an $\sym(\mathfrak{a}^*)$-module. Since $\sym(\mathfrak{a}^*)$ is Noetherian, submodules and quotients of ﬁnitely generated $\sym(\mathfrak{a}^*)$-modules are ﬁnitely generated, thus each $E_r$ is ﬁnitely generated. In particular, let $\{x_i\}\subset E_\infty$ be a finite set of homogeneous generators for $E_\infty$. Then one proves that representatives $y_i\in H_\mathfrak{a}({\mathcal{F}})$ chosen via
$$E_\infty^{k,l}\cong \frac{F^kH^{k+l}_\mathfrak{a}({\mathcal{F}})}{F^{k+1}H^{k+l}_\mathfrak{a}({\mathcal{F}})}$$
generate $H_\mathfrak{a}({\mathcal{F}})$ (see \cite[Lemma A.17]{goertsches3}). Summing up (cf. \cite[Lemma A.18]{goertsches3}):

\begin{proposition} Let ${\mathcal{F}}$ be a transversely compact singular Killing foliation. Then $H_\mathfrak{a}({\mathcal{F}})$ is finitely generated as an $\sym(\mathfrak{a}^*)$-module.
\end{proposition}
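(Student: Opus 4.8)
The plan is to deduce the statement directly from the spectral sequence $(E_r)$ constructed above, which converges to $H_\mathfrak{a}({\mathcal{F}})$ and has first page $E_1\cong \sym(\mathfrak{a}^*)\otimes H({\mathcal{F}})$ by Lemma \ref{lemma: basic forms are invariant}. First I would invoke Theorem \ref{theorem: basic cohomology is finite dimensional}: since ${\mathcal{F}}$ is transversely compact and complete, $\dim H({\mathcal{F}})<\infty$, so if $\{c_i\}$ is a basis of $H({\mathcal{F}})$ then the elements $1\otimes c_i$ generate $E_1$ as an $\sym(\mathfrak{a}^*)$-module, i.e.\ $E_1$ is finitely generated.

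Next I would bring in Noetherianity. As $\mathfrak{a}$ is finite-dimensional, $\sym(\mathfrak{a}^*)$ is a polynomial ring, hence Noetherian by the Hilbert basis theorem, so submodules and quotients of finitely generated $\sym(\mathfrak{a}^*)$-modules are again finitely generated. Each page $E_{r+1}=H(E_r,d_r)$ is a subquotient of $E_r$ in the category of $\sym(\mathfrak{a}^*)$-modules, with the module structure inherited recursively from $E_1$; by induction on $r$ every $E_r$ is finitely generated, and since the filtration $F^kC_\mathfrak{a}({\mathcal{F}})$ is canonically bounded the sequence degenerates at a finite page, so $E_\infty$ is finitely generated. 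I would then fix a finite set of homogeneous generators $\{x_i\}\subset E_\infty$.

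The final step is to transport generators from the associated graded back to $H_\mathfrak{a}({\mathcal{F}})$ itself, using the convergence
$$E_\infty^{k,l}\cong \frac{F^kH^{k+l}_\mathfrak{a}({\mathcal{F}})}{F^{k+1}H^{k+l}_\mathfrak{a}({\mathcal{F}})}$$
together with the fact that the induced filtration on $H_\mathfrak{a}({\mathcal{F}})$ is bounded (exhaustive and Hausdorff, coming from a first-quadrant filtration). The standard downward induction on filtration degree — precisely \cite[Lemma A.17]{goertsches3} — then shows that any lifts $y_i\in H_\mathfrak{a}({\mathcal{F}})$ of the $x_i$ generate $H_\mathfrak{a}({\mathcal{F}})$ over $\sym(\mathfrak{a}^*)$, so the proof concludes by citing \cite[Lemma A.18]{goertsches3}. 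The only point needing care, and the closest thing to an obstacle, is the bookkeeping in this last step: one must check that the $\sym(\mathfrak{a}^*)$-module structure on $E_\infty$ is the one coming from multiplication on the $\sym(\mathfrak{a}^*)$-factor of $E_1$ and that the chosen lifts respect the filtration degree, but both are guaranteed by the way the filtration and its multiplicative structure were set up above, so no genuine difficulty arises.
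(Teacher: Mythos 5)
Your proposal is correct and follows essentially the same route as the paper: the spectral sequence $(E_r)$ with $E_1\cong\sym(\mathfrak{a}^*)\otimes H(\mathcal{F})$, finite dimensionality of $H(\mathcal{F})$ from Theorem \ref{theorem: basic cohomology is finite dimensional}, Noetherianity of $\sym(\mathfrak{a}^*)$ to propagate finite generation through the pages, and the citations to \cite[Lemma A.17, A.18]{goertsches3} for transporting generators from $E_\infty$ to $H_\mathfrak{a}(\mathcal{F})$. (Your remark that the sequence ``degenerates at a finite page'' should be read in each bidegree rather than globally, but this is exactly what \cite[Lemma A.17]{goertsches3} handles, and the paper glosses this at the same level of detail.)
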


As a corollary, it follows that $\rank H_\mathfrak{a}^{\mathrm{even}}({\mathcal{F}})=\rank E_\infty^{\mathrm{even}}$ and $\rank H_\mathfrak{a}^{\mathrm{odd}}({\mathcal{F}})=\rank E_\infty^{\mathrm{odd}}$, with respect to the $\mathbb{Z}_2$-graded decompositions in even and odd degree elements of $H_\mathfrak{a}({\mathcal{F}})$ and $E_\infty$ (see \cite[Corollary A.19]{goertsches3}). This is proved by comparing the respective Poincaré series, and using that $E_\infty\cong H_\mathfrak{a}({\mathcal{F}})$ as vector spaces.

\subsection{Localization of the basic Euler characteristic} We can now directly adapt \cite[Theorem 9.3]{goertsches3} to our transverse setting.

\begin{theorem}[Localization of the basic Euler characteristic]\label{theorem: localization of bec}
Let ${\mathcal{F}}$ be a transversely compact, neat singular Killing foliation with structural algebra $\mathfrak{a}$. Then
$$\chi({\mathcal{F}})=\chi({\mathcal{F}}^\mathfrak{a}).$$
\end{theorem}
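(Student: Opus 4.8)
The plan is to introduce the \emph{equivariant basic Euler characteristic}
$$\chi_\mathfrak{a}(\f):=\rank_{\sym(\mathfrak{a}^*)}H_\mathfrak{a}^{\mathrm{even}}(\f)-\rank_{\sym(\mathfrak{a}^*)}H_\mathfrak{a}^{\mathrm{odd}}(\f),$$
which makes sense because $H_\mathfrak{a}(\f)$ is finitely generated over $\sym(\mathfrak{a}^*)$ (shown above), and then to prove separately that (a) $\chi_\mathfrak{a}(\f)=\chi(\f)$, (b) $\chi_\mathfrak{a}(\f^\mathfrak{a})=\chi(\f^\mathfrak{a})$, and (c) $\chi_\mathfrak{a}(\f)=\chi_\mathfrak{a}(\f^\mathfrak{a})$; concatenating these three identities gives the theorem. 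This follows the classical argument for torus actions in \cite[Theorem 9.3]{goertsches3}, the regular Killing case being treated in \cite{goertsches}.

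For (a) I would work with the spectral sequence $(E_r)$ of the filtered complex $C_\mathfrak{a}(\f)$ introduced above. In the bigrading $C^{k,l}_\mathfrak{a}(\f)$ the first index $k$ is always even, so the parity of the total degree $k+l$ agrees with that of the form degree $l$; hence from $E_1\cong\sym(\mathfrak{a}^*)\otimes H(\f)$ one reads off
$$\rank E_1^{\mathrm{even}}-\rank E_1^{\mathrm{odd}}=\sum_l(-1)^l\dim H^l(\f)=\chi(\f),$$
the right-hand side being a finite sum by Theorem \ref{theorem: basic cohomology is finite dimensional}. Each differential $d_r$ raises the total degree by one, so it interchanges the even and odd parts of $E_r$, exhibiting $(E_r^{\mathrm{even}},E_r^{\mathrm{odd}},d_r)$ as a two-term complex of finitely generated $\sym(\mathfrak{a}^*)$-modules. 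Since $\sym(\mathfrak{a}^*)$ is a Noetherian integral domain, rank is additive over short exact sequences, so taking cohomology preserves the difference of ranks, i.e. $\rank E_{r+1}^{\mathrm{even}}-\rank E_{r+1}^{\mathrm{odd}}=\rank E_r^{\mathrm{even}}-\rank E_r^{\mathrm{odd}}$ for every $r$. As the sequence converges and $\rank H_\mathfrak{a}^{\mathrm{even}}(\f)=\rank E_\infty^{\mathrm{even}}$, $\rank H_\mathfrak{a}^{\mathrm{odd}}(\f)=\rank E_\infty^{\mathrm{odd}}$ by the rank comparison noted above (cf. \cite[Corollary A.19]{goertsches3}), we obtain (a).

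For (b), the transverse $\mathfrak{a}$-action on $\f^\mathfrak{a}=\f|_{M^\mathfrak{a}}$ is trivial by the very definition of $M^\mathfrak{a}$, so $H_\mathfrak{a}(\f^\mathfrak{a})\cong\sym(\mathfrak{a}^*)\otimes H(\f^\mathfrak{a})$ is a free $\sym(\mathfrak{a}^*)$-module by Example \ref{example: equivariant cohomology of trivial action}. By neatness each component $C$ of $M^\mathfrak{a}$ is a closed, saturated, horizontally totally geodesic submanifold of $M$ (as noted after Definition \ref{definition: neat skf}); since $\f$ is complete, the restriction $\mathrm{g}|_C$ of a complete adapted metric is then a complete adapted metric for $\f|_C$, and $M^\mathfrak{a}/\f$ is compact because it is closed in $M/\f$, so $\f^\mathfrak{a}$ is a transversely compact, complete regular Riemannian foliation. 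Hence $\dim H(\f^\mathfrak{a})<\infty$ by Theorem \ref{theorem: basic cohomology is finite dimensional}, so $\chi(\f^\mathfrak{a})$ is defined, and freeness gives $\rank H_\mathfrak{a}^{\mathrm{even}}(\f^\mathfrak{a})-\rank H_\mathfrak{a}^{\mathrm{odd}}(\f^\mathfrak{a})=\sum_l(-1)^l\dim H^l(\f^\mathfrak{a})=\chi(\f^\mathfrak{a})$, which is (b).

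Finally, (c) is immediate from Theorem \ref{theorem: borel localization}: the map $i^*$ has degree $0$, hence respects the $\mathbb{Z}_2$-grading into even and odd degree elements, and $S^{-1}i^*$ is an isomorphism of $\mathrm{Q}(\mathfrak{a}^*)$-vector spaces in each parity; since $\rank_{\sym(\mathfrak{a}^*)}(-)=\dim_{\mathrm{Q}(\mathfrak{a}^*)}S^{-1}(-)$, this yields $\rank H_\mathfrak{a}^{\mathrm{even}}(\f)=\rank H_\mathfrak{a}^{\mathrm{even}}(\f^\mathfrak{a})$ and likewise in odd degree, so $\chi_\mathfrak{a}(\f)=\chi_\mathfrak{a}(\f^\mathfrak{a})$. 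The only step that needs genuine care is the bookkeeping in (a) — that the alternating sum of ranks is stable across the pages of the spectral sequence — but this is routine once the finite generation of $H_\mathfrak{a}(\f)$ over $\sym(\mathfrak{a}^*)$ is in hand; (b) and (c) are then essentially formal.
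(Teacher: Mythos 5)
Your proposal is correct and takes essentially the same route as the paper: the paper's proof is precisely the chain of equalities $\chi(\f)=\rank E_1^{\mathrm{even}}-\rank E_1^{\mathrm{odd}}=\rank E_\infty^{\mathrm{even}}-\rank E_\infty^{\mathrm{odd}}=\rank H^{\mathrm{even}}_\mathfrak{a}(\f)-\rank H^{\mathrm{odd}}_\mathfrak{a}(\f)=\rank H^{\mathrm{even}}_\mathfrak{a}(\f^\mathfrak{a})-\rank H^{\mathrm{odd}}_\mathfrak{a}(\f^\mathfrak{a})=\chi(\f^\mathfrak{a})$, which is your $\chi=\chi_\mathfrak{a}$ on both sides plus Borel localization in the middle; you have merely named the intermediate invariant and supplied a justification (component-by-component regularity, completeness, compactness of $M^\mathfrak{a}/\f$) for $\chi(\f^\mathfrak{a})$ being defined, which the paper leaves implicit.
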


\begin{proof}
We have
\begin{align*}
\chi({\mathcal{F}}) & = \dim H^{\mathrm{even}}({\mathcal{F}}) - \dim H^{\mathrm{odd}}({\mathcal{F}})\\
 & =  \dim_{\mathrm{Q}(\mathfrak{a}^*)} \mathrm{Q}(\mathfrak{a}^*)\otimes H^{\mathrm{even}}({\mathcal{F}}) - \dim_{\mathrm{Q}(\mathfrak{a}^*)} \mathrm{Q}(\mathfrak{a}^*)\otimes H^{\mathrm{odd}}({\mathcal{F}}).
\end{align*}
Using that $E_1\cong \sym(\mathfrak{a}^*)\otimes H({\mathcal{F}})$, this leads to
$$\chi({\mathcal{F}})=\rank E_1^{\mathrm{even}}-\rank E_1^{\mathrm{odd}}.$$

Localizing each page of the spectral sequence $(E_r)$, we obtain $\mathbb{Z}_2$-graded vector spaces with odd differentials $d_r:E_r\to E_r$, so that $E_{r+1}=H(E_r,d_r)$. It follows that $\rank E_i^{\mathrm{even}}-\rank E_i^{\mathrm{odd}}=\rank E_{i+1}^{\mathrm{even}}-\rank E_{i+1}^{\mathrm{odd}}$, since the Euler characteristic is preserved under taking cohomology (see \cite[Lemma 9.2]{goertsches3}). Therefore
\begin{align*}
\chi({\mathcal{F}}) & =\rank E_\infty^{\mathrm{even}}-\rank E_\infty^{\mathrm{odd}}\\
 & = \rank H_\mathfrak{a}^{\mathrm{even}}({\mathcal{F}}) - \rank H_\mathfrak{a}^{\mathrm{odd}}({\mathcal{F}}).
\end{align*}

Using Theorem \ref{theorem: borel localization} and the fact that $H_{\mathfrak{a}}({\mathcal{F}}^\mathfrak{a})=\sym(\mathfrak{a}^*)\otimes H({\mathcal{F}}^\mathfrak{a})$, we then have
\begin{align*}
\chi({\mathcal{F}}) & = \rank H_\mathfrak{a}^{\mathrm{even}}({\mathcal{F}}^\mathfrak{a}) - \rank H_\mathfrak{a}^{\mathrm{odd}}({\mathcal{F}}^\mathfrak{a})\\
 & = \dim_{\mathrm{Q}(\mathfrak{a}^*)} \mathrm{Q}(\mathfrak{a}^*)\otimes H^{\mathrm{even}}({\mathcal{F}}^\mathfrak{a}) - \dim_{\mathrm{Q}(\mathfrak{a}^*)} \mathrm{Q}(\mathfrak{a}^*)\otimes H^{\mathrm{odd}}({\mathcal{F}}^\mathfrak{a}) \\
 & = \dim H^{\mathrm{even}}({\mathcal{F}}^\mathfrak{a}) - H^{\mathrm{odd}}({\mathcal{F}}^\mathfrak{a})\\
 & = \chi({\mathcal{F}}^\mathfrak{a}),
\end{align*}
which finishes the proof.
\end{proof}

The following are obvious consequences.

\begin{corollary}\label{corollary: chi detects closed leaves}
Let ${\mathcal{F}}$ be a transversely compact, singular Killing foliation. If $\chi({\mathcal{F}})\neq0$, then $\f$ has at least one closed leaf.
\end{corollary}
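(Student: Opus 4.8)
The plan is to argue by contraposition: supposing that $\f$ has no closed leaves, I would show that $\chi(\f)=0$, contradicting the hypothesis. First I would translate the assumption into fixed-locus language: ``$\f$ has no closed leaves'' means exactly $\Sigma_{\mathrm{cl}}=\emptyset$, and since for a singular Killing foliation one has $M^{\mathfrak{a}}=\Sigma_{\mathrm{cl}}$, this gives $M^{\mathfrak{a}}=\emptyset$.

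With $M^{\mathfrak{a}}=\emptyset$, the neatness condition of Definition \ref{definition: neat skf} holds vacuously, so $\f$ is neat, and being a transversely compact singular Killing foliation it is in particular complete; hence Theorem \ref{theorem: localization of bec} applies and yields $\chi(\f)=\chi(\f^{\mathfrak{a}})$. (That both sides are well defined is guaranteed by Theorem \ref{theorem: basic cohomology is finite dimensional}.) Now $\f^{\mathfrak{a}}=\f|_{M^{\mathfrak{a}}}$ is the foliation of the empty manifold, so its basic complex $\Omega(\f^{\mathfrak{a}})$ is the zero complex, $H(\f^{\mathfrak{a}})=0$, and therefore $\chi(\f^{\mathfrak{a}})=0$. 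Combining, $\chi(\f)=0$, which contradicts $\chi(\f)\neq 0$. Hence $\f$ must have at least one closed leaf.

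I do not expect any genuine obstacle here, since this is a formal consequence of Theorem \ref{theorem: localization of bec}; the only points worth spelling out are that the absence of closed leaves forces neatness (so the localization theorem is applicable without extra hypotheses) and that the basic Euler characteristic of the empty foliation vanishes, both of which are immediate.
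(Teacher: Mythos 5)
Your proof is correct and is essentially identical to the paper's: argue by contraposition, observe that the absence of closed leaves forces $M^{\mathfrak{a}}=\emptyset$, hence neatness holds vacuously, and then Theorem \ref{theorem: localization of bec} gives $\chi(\f)=\chi(\f^{\mathfrak{a}})=0$. You simply spell out the details (well-definedness via Theorem \ref{theorem: basic cohomology is finite dimensional}, and the vanishing of the Euler characteristic of the empty foliation) that the paper leaves implicit.
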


\begin{proof} If $\f$ has no closed leaves (and hence is neat), then $\chi({\mathcal{F}})=\chi({\mathcal{F}}^\mathfrak{a})=0$.
\end{proof}

\begin{corollary}\label{corollary: localization of bec finite case}
Let ${\mathcal{F}}$ be a transversely compact, singular Killing foliation whose closed leaves are isolated. Then
$$\#{\mathcal{F}}^\mathfrak{a}=\chi({\mathcal{F}}),$$
that is, the basic Euler characteristic is precisely the number of closed leaves.
\end{corollary}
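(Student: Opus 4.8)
The plan is to deduce this directly from Theorem~\ref{theorem: localization of bec}. First I would note that $M^{\mathfrak{a}}=\Sigma_{\mathrm{cl}}$, and that the assumption that the closed leaves are isolated means precisely that each closed leaf is a connected component of $M^{\mathfrak{a}}$ (it is open in $M^{\mathfrak{a}}$ by isolation, and closed in $M$ since $\overline{L}=L$ for a closed leaf). As any leaf is contained in a single stratum of ${\mathcal{F}}$, namely the locus of leaves of its dimension, it follows that ${\mathcal{F}}$ is neat in the sense of Definition~\ref{definition: neat skf}. Moreover, transverse compactness ensures that $M^{\mathfrak{a}}$ has only finitely many connected components, exactly as in the opening of the proof of Theorem~\ref{theorem: borel localization}; write $M^{\mathfrak{a}}=L_1\sqcup\dots\sqcup L_m$ with each $L_j$ a closed leaf, so that $\#{\mathcal{F}}^{\mathfrak{a}}=m<\infty$.

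Since ${\mathcal{F}}$ is neat, Theorem~\ref{theorem: localization of bec} applies and gives $\chi({\mathcal{F}})=\chi({\mathcal{F}}^{\mathfrak{a}})$, so it remains to show $\chi({\mathcal{F}}^{\mathfrak{a}})=m$. Because $M^{\mathfrak{a}}$ is the disjoint union of the $L_j$, the basic complex splits, $\Omega({\mathcal{F}}^{\mathfrak{a}})=\bigoplus_{j=1}^{m}\Omega({\mathcal{F}}{\vert}_{L_j})$, whence $H({\mathcal{F}}^{\mathfrak{a}})=\bigoplus_{j=1}^{m}H({\mathcal{F}}{\vert}_{L_j})$ and $\chi({\mathcal{F}}^{\mathfrak{a}})=\sum_{j=1}^{m}\chi({\mathcal{F}}{\vert}_{L_j})$. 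For each $j$, the foliation ${\mathcal{F}}{\vert}_{L_j}$ has $L_j$ as its single leaf, so $T{\mathcal{F}}{\vert}_{L_j}=TL_j$; a basic form is then annihilated by $\iota_X$ and $\mathcal{L}_X$ for every vector field $X$ on $L_j$, which forces it to be a locally constant $0$-form. As $L_j$ is connected, $H^{0}({\mathcal{F}}{\vert}_{L_j})=\mathbb{R}$ and $H^{k}({\mathcal{F}}{\vert}_{L_j})=0$ for $k>0$, so $\chi({\mathcal{F}}{\vert}_{L_j})=1$. (Alternatively, one may invoke Proposition~\ref{proposition: cohomology of non closed leaves is torsion} together with Example~\ref{example: equivariant cohomology of trivial action}, noting that $\mathfrak{a}_{L_j}=\mathfrak{a}$ for a closed leaf contained in $M^{\mathfrak{a}}$.) Therefore $\chi({\mathcal{F}})=\chi({\mathcal{F}}^{\mathfrak{a}})=m=\#{\mathcal{F}}^{\mathfrak{a}}$.

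I do not expect any genuine obstacle here: the statement is Theorem~\ref{theorem: localization of bec} combined with the observation that isolated closed leaves force neatness and the elementary computation that the basic cohomology of a one-leaf foliation is $\mathbb{R}$. The only point needing a little care is the finiteness of $\#{\mathcal{F}}^{\mathfrak{a}}$, which rests on the transverse compactness of ${\mathcal{F}}$ (equivalently of $\overline{{\mathcal{F}}}$) together with the fact that $M^{\mathfrak{a}}$ is a union of strata of $\overline{{\mathcal{F}}}$, as already used in the proof of Theorem~\ref{theorem: borel localization}.
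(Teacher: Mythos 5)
Your proof is correct and is precisely the argument the paper leaves implicit (the paper presents this corollary without proof as an ``obvious consequence'' of Theorem~\ref{theorem: localization of bec}). You correctly identify the two small things that actually need to be checked: that isolated closed leaves force neatness (each closed leaf lies in a single stratum by definition of the stratification), and that the basic cohomology of a single-leaf foliation is $\mathbb{R}$ concentrated in degree $0$, so that $\chi(\mathcal{F}^{\mathfrak{a}}) = \#\mathcal{F}^{\mathfrak{a}}$. Finiteness of the number of closed leaves via transverse compactness is also handled properly.
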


\subsection{The equivariantly formal case}

As in the classical case of Lie group actions, one can consider the equivariantly formal case, which includes the equivariant cohomology of compact symplectic manifolds with Hamiltonian torus actions. In our case, we will say that singular Killing foliation ${\mathcal{F}}$ is \textit{equivariantly formal} if its equivariant basic cohomology is equivariantly formal, that is,
$$H_\mathfrak{a}({\mathcal{F}})\cong \sym(\mathfrak{a}^*)\otimes H({\mathcal{F}})$$
as $\sym(\mathfrak{a}^*)$-modules (cf. \cite[Section 6.3]{goertsches}). Equivalently, ${\mathcal{F}}$ is equivariantly formal when $(E_r)$ collapses at the first page (see \cite[Theorem 7.3]{goertsches3}). This is the case, for instance, if $H^{\mathrm{odd}}({\mathcal{F}})=0$ (see \cite[Corollary A.10]{goertsches3}). The following theorem is a singular version of \cite[Theorem 1]{goertsches}.

\begin{theorem}\label{theorem: comparison dimensions}
Let ${\mathcal{F}}$ be a transversely compact, neat singular Killing foliation with structural algebra $\mathfrak{a}$. Then
$$\dim H({\mathcal{F}}^\mathfrak{a})\leq \dim H({\mathcal{F}}),$$
and equality holds if, and only if, ${\mathcal{F}}$ is equivariantly formal.
\end{theorem}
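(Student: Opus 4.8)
The plan is to reduce the statement to the spectral sequence $(E_r)$ of the previous subsection together with Borel localization (Theorem \ref{theorem: borel localization}). Concretely, I will first identify $\dim H({\mathcal{F}}^\mathfrak{a})$ with the $\sym(\mathfrak{a}^*)$-rank of $H_\mathfrak{a}({\mathcal{F}})$: since the $\mathfrak{a}$-action on ${\mathcal{F}}^\mathfrak{a}$ is trivial, Example \ref{example: equivariant cohomology of trivial action} gives $H_\mathfrak{a}({\mathcal{F}}^\mathfrak{a})=\sym(\mathfrak{a}^*)\otimes H({\mathcal{F}}^\mathfrak{a})$, a free $\sym(\mathfrak{a}^*)$-module of rank $\dim H({\mathcal{F}}^\mathfrak{a})$ (which is finite, ${\mathcal{F}}^\mathfrak{a}$ being transversely compact; cf.\ Theorem \ref{theorem: basic cohomology is finite dimensional}). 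Hence $\rank H_\mathfrak{a}({\mathcal{F}}^\mathfrak{a})=\dim H({\mathcal{F}}^\mathfrak{a})$, and since localizing at $S=\sym(\mathfrak{a}^*)\setminus\{0\}$ computes $\rank$ as $\dim_{\mathrm{Q}(\mathfrak{a}^*)}$ of the localization, Theorem \ref{theorem: borel localization} yields $\rank H_\mathfrak{a}({\mathcal{F}})=\rank H_\mathfrak{a}({\mathcal{F}}^\mathfrak{a})=\dim H({\mathcal{F}}^\mathfrak{a})$.

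Next I would run the spectral sequence $(E_r)$, which has $E_1\cong \sym(\mathfrak{a}^*)\otimes H({\mathcal{F}})$, free of rank $\dim H({\mathcal{F}})$ (finite by Theorem \ref{theorem: basic cohomology is finite dimensional}), and $E_\infty\cong H_\mathfrak{a}({\mathcal{F}})$ as vector spaces, with the even and odd ranks of $E_\infty$ agreeing with those of $H_\mathfrak{a}({\mathcal{F}})$ as recorded after its construction. Each differential $d_r$ is $\sym(\mathfrak{a}^*)$-linear, so passing to $\mathrm{Q}(\mathfrak{a}^*)$ and applying rank--nullity gives $\rank E_{r+1}=\rank E_r-2\,\rank(\image d_r)$; in particular the ranks are non-increasing in $r$. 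Summing even and odd parts, $\dim H({\mathcal{F}}^\mathfrak{a})=\rank H_\mathfrak{a}({\mathcal{F}})=\rank E_\infty\le \rank E_1=\dim H({\mathcal{F}})$, which is the asserted inequality.

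For the equality case, the inequalities above show that $\dim H({\mathcal{F}}^\mathfrak{a})=\dim H({\mathcal{F}})$ holds exactly when $\rank E_r$ is constant in $r$, i.e.\ when $\rank(\image d_r)=0$ for all $r$. To turn this into collapse at $E_1$ I would argue inductively: $E_1$ is free, hence torsion-free, so $\image d_1\subset E_1$ is torsion-free and $\rank(\image d_1)=0$ forces $d_1=0$, whence $E_2=E_1$ is again free; repeating, if $d_s=0$ for all $s<r$ then $E_r=E_1$ is free, so $\rank(\image d_r)=0$ again forces $d_r=0$. Thus constancy of the ranks is equivalent to $d_r=0$ for every $r\ge 1$, that is, to the collapse of $(E_r)$ at the first page, which by \cite[Theorem 7.3]{goertsches3} (in the transverse form used above) is equivalent to ${\mathcal{F}}$ being equivariantly formal. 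The one step that needs genuine care — the main obstacle — is precisely this last induction: a priori the higher pages $E_r$ need not be torsion-free, so one cannot deduce $d_r=0$ from $\rank(\image d_r)=0$ at an arbitrary page; the conclusion is only available because the vanishing of all earlier differentials is fed back in to keep $E_r$ free, so the induction must be organized in that order.
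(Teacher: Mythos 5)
Your proof is correct and follows essentially the same route as the paper's: use Example \ref{example: equivariant cohomology of trivial action} and Theorem \ref{theorem: borel localization} to identify $\dim H({\mathcal{F}}^\mathfrak{a})$ with $\rank H_\mathfrak{a}({\mathcal{F}})$, then compare with $\rank E_1 = \dim H({\mathcal{F}})$ via the spectral sequence. The one genuine improvement is in the equality case: the paper's proof asserts that ``if some $d_r$ is not zero, there must be a drop in the dimension,'' which is not automatic when a higher page $E_r$ may have torsion (indeed, $d_r\neq 0$ with $\image d_r$ torsion need not change the rank). Your inductive argument --- using that vanishing of $d_1,\dots,d_{r-1}$ keeps $E_r=E_1$ free, so that $\rank(\image d_r)=0$ does force $d_r=0$ --- is exactly the care needed to make that step rigorous, and it matches what the cited source \cite[Proposition 9.6]{goertsches3} actually has to do.
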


\begin{proof}
The proof of the classical case of torus actions in \cite[Proposition 9.6]{goertsches3} adapts directly. One has
\begin{align*}
H({\mathcal{F}}^\mathfrak{a}) & = \rank(\sym(\mathfrak{a}^*)\otimes H({\mathcal{F}}^\mathfrak{a}))\\
 & = \rank H_\mathfrak{a}({\mathcal{F}}^\mathfrak{a})\\
 & = \rank H_\mathfrak{a}({\mathcal{F}})\\
 & \leq \dim H({\mathcal{F}}),
\end{align*}
The last inequality comes from $E_1\cong \sym(\mathfrak{a}^*)\otimes H({\mathcal{F}})$. It is clear that equality holds if ${\mathcal{F}}$ is equivariantly formal. Conversely, localizing each page of $(E_r)$ one sees that, if some $d_r$ is not zero, there must be a drop in the dimension at the corresponding pages. So if $\dim H({\mathcal{F}}^\mathfrak{a})= \dim H({\mathcal{F}})$ (hence $\rank H_\mathfrak{a}({\mathcal{F}})=\rank E_1$), then $d_r=0$ for all $r\geq1$, and the sequence collapses at the first page.
\end{proof}

\subsection*{Acknowledgements}
I am grateful to Professors M.~Alexandrino and D.~Töben for many helpful discussions.

\end{document}